\documentclass[reqno,a4paper,11pt]{amsart}
\usepackage[english]{babel}
\usepackage{amssymb}
\usepackage{amsmath}
\usepackage{MnSymbol}
\usepackage[all]{xy}
\usepackage[a4paper,left=4cm,right=4cm]{geometry}
\usepackage{graphicx}
\usepackage{mathtools}
\usepackage{enumerate}
\usepackage{braket}
\usepackage{url}
\usepackage{tikz}
\usepackage{tikz-cd}
\tikzcdset{diagrams={arrows={shorten >=-0.5ex,shorten <=-0.5ex}}}
\usetikzlibrary{graphs}
\usetikzlibrary{arrows,chains,matrix,positioning,scopes}
\makeatletter
\tikzset{joinlabel/.code=\tikzset{after node path={%
\ifx\tikzchainprevious\pgfutil@empty\else(\tikzchainprevious)%
edge[every join]#1(\tikzchaincurrent)\fi}}}

\makeatother


\newtheorem{prop}{Proposition}
\newtheorem{thm}[prop]{Theorem}
\newtheorem{cor}[prop]{Corollary}
\newtheorem{lem}[prop]{Lemma}
\newtheorem{fact}[prop]{Fact}

\theoremstyle{definition}
\newtheorem{defn}[prop]{Definition}
\newtheorem{ques}[prop]{Question}
\newtheorem{ex}[prop]{Example}

\theoremstyle{remark}
\newtheorem{rem}[prop]{Remark}

\numberwithin{prop}{section}
\numberwithin{equation}{section}


\newcommand{\biB}{\mathrm{Bi}}

\newcommand{\Hom}{\mathrm{Hom}}
\newcommand{\Ext}{\mathrm{Ext}}
\newcommand{\dExt}{\mathrm{dExt}}
\newcommand{\dTor}{\mathrm{dTor}}
\newcommand{\dH}{\mathrm{dH}}
\newcommand{\dd}{\mathrm{d}}
\newcommand{\ff}{\mathrm{f}}
\newcommand{\iid}{\mathrm{id}}
\newcommand{\stab}{\mathrm{stab}}

\newcommand{\ccd}{\mathrm{cd}}
\newcommand{\hhd}{\mathrm{hd}}
\newcommand{\pd}{\mathrm{pr.dim}}
\newcommand{\fd}{\mathrm{fl.dim}}

\newcommand{\ob}{\mathrm{ob}}

\newcommand{\Aut}{\mathrm{Aut}}

\newcommand{\ind}{\mathrm{ind}}
\newcommand{\rst}{\mathrm{res}}


\newcommand{\FP}{\textup{FP}}
\newcommand{\F}{\textup{F}}
\newcommand{\KP}{\textup{KP}}
\newcommand{\K}{\textup{K}}
\newcommand{\ca}[1]{\mathcal{#1}}
\newcommand{\eu}[1]{\mathfrak{#1}}
\newcommand{\CO}{\mathfrak{CO}}
\newcommand{\Z}{\mathbb{Z}}
\newcommand{\Q}{\mathbb{Q}}
\newcommand{\N}{\mathbb{N}}

\newcommand{\R}{\mathbb{R}}
\newcommand{\RR}{\textup{R}}


\newcommand{\gwr}{\nwsebipropto}


\newcommand{\RG}{\RR[G]}
\newcommand{\RH}{\RR[H]}

\newcommand{\QG}{\Q[G]}
\newcommand{\QH}{\Q[H]}

\newcommand{\dis}{\mathbf{dis}}
\newcommand{\Mod}{\mathbf{mod}}
\newcommand{\Rmod}{{}_\RR\Mod}

\newcommand{\RGdis}{{}_{\RR[G]}\dis}
\newcommand{\RGmod}{{}_{\RR[G]}\Mod}

\newcommand{\QGdis}{{}_{\QG}\dis}

\newcommand{\RGtop}{{}_{\RR[G]}\mathbf{top}}

\newcommand{\disQG}{\dis_{\QG}}
\newcommand{\disRG}{\dis_{\RG}}
\newcommand{\modRG}{\Mod_{\RG}}
\newcommand{\Qmod}{{}_{\Q}\Mod}

\newcommand{\colim}{\mathrm{colim}}
\newcommand{\caR}{\ca{R}}

\newcommand{\quot}{\mathbin{\!/\mkern-5mu/\!}}
\newcommand{\argu}{\hbox to 7truept{\hrulefill}}

\title[Finiteness properties of t.d.l.c. group]{Finiteness properties of totally disconnected locally compact groups}

\author[I. Castellano]{I. Castellano}
\address{Department of Mathematics, Computer Science and Physics - University of Udine, Via delle Scienze 206, Udine \textup{33100}, Italy.}
\email{ilaria.castellano88@gmail.com}

\author[G. Corob Cook]{G. Corob Cook}
\address{Department of Mathematics -
University of the Basque Country UPV/EHU,
Leioa (Bilbao), Spain}
\email{gcorobcook@gmail.com}

\begin{document}

\begin{abstract}
In this paper we investigate finiteness properties of totally disconnected locally compact groups for general commutative rings $\textup{R}$, in particular for $\RR = \Z$ and $\RR = \Q$. We show these properties satisfy many analogous results to the case of discrete groups, and we provide analogues of the famous Bieri's and Brown's criteria for finiteness properties and deduce that both $\FP_n$-properties and  $\F_n$-properties are quasi-isometric invariant. Moreover, we introduce graph-wreath products in the category of totally disconnected locally compact groups and discuss their finiteness properties.
\end{abstract}
\maketitle
\section*{Introduction}
\let\thefootnote\relax\footnote{This work was supported by EPSRC Grant N007328/1 Soluble Groups and Cohomology.}
The notion of compact presentation was introduced in 1964 by Knesern, and  a first attempt at generalising this to higher dimension is due to Abels and Tiemeyer \cite{AT}. However, compact presentability and related finiteness conditions for locally compact groups have received little attention until recently. In \cite{cdlh} Cornulier and de la Harpe revisited $\sigma$-compactness, finite presentability and compact presentability of locally compact groups from the point of view of coarse geometry. A homological approach to finiteness conditions of totally disconnected locally compact groups over the rationals was introduced in \cite{it:ratdiscoh}.  Nevertheless, the theory of finiteness conditions for totally disconnected locally compact (t.d.l.c.) groups is still less developed than the one for discrete groups.  

In this paper we investigate finiteness properties of t.d.l.c. groups for general commutative rings $\RR$  (and in particular for $\RR=\Z$ and $\RR=\Q$) bringing forward the study initiated in \cite{it:ratdiscoh}.  There the authors take advantage of the divisibility of $\Q$ in order to deal with a category of modules with enough projectives. When $\RR$ is arbitrary instead,  the lack of projective objects means that to get results over $\RR$, rather than just $\Q$, requires some additional work: it requires the theory of topological $\RG$-modules introduced in \cite{Ged}. The benefits of this generalisation will become apparent when we want to study $G$-spaces and $\F_n$-properties. Indeed, we need to understand the homology over $\Z$ in order to use the Hurewicz theorem to deduce information about the homotopy of these spaces from their homology; see, for example, Proposition \ref{prop:FvsFP}.

\medskip

The paper opens with a section dedicated to the categories $\RGdis$ and $\RGtop$, whose objects are respectively discrete $\RG$-modules and  topological $\RG$-modules. The objects of $\RGdis$ are the left $\RG$-modules $M$ such that the action $G\times M\to M$ is continuous if $M$ is endowed with the discrete topology. In the literature, discrete modules first appeared in the context of profinite groups \cite{serre}  and were then brought to the context of t.d.l.c. groups in \cite{it:ratdiscoh}.  

For an arbitrary ring $\RR$, the category $\RGdis$ is well-behaved: it is abelian, it has enough injectives and all (co)limits exist, but the situation is best for $\RR=\Q$. In fact, $\QGdis$ has also enough projectives and the category becomes amenable to many other homological tools. For example, a Lazard-type theorem (see Theorem~\ref{thm:lazard}) provides a complete description of all flat objects of $\QGdis$ by means of permutation modules.

The use of permutation modules is predominant in the study of finiteness conditions of t.d.l.c. groups. Of particular interest are the so-called {\it proper discrete permutation modules}, i.e., modules $\RR[\Omega]$ freely $\RR$-generated by a $G$-set $\Omega$ whose pointwise stabilisers are compact open subgroups of $G$. Proper discrete permutation modules are the key example of projective objects in $\QGdis$ which the reader may wish to keep in mind while reading the paper. Unfortunately, they fail to be projective over other rings, even in such basic cases as $R=\Z$.
 
Therefore, we embed $\RGdis$ into a quasi-abelian category that has enough projectives, whatever $\RR$ is chosen. This is possible because a t.d.l.c. group $G$ is a k-space and we can construct the category $\RGtop$ whose objects are the so-called  {\it k-$\RG$-modules}, i.e., module objects in the category of k-spaces over the k-$\RR$-algebra $\RG$; see \S\ref{ss:topmod} for more details.

\medskip 

The key technical part of this work is Section~\ref{s:comparison} where we define several finiteness properties for t.d.l.c. groups, in terms of the category $\RGtop$ as well as $\RGdis$. The terms $\FP_n$ and $\F_n$ will indicate finiteness conditions in $\QGdis$ and the terms $\KP_n$ and $\K_n$ will denote that we are in the topological context of k-spaces. For instance, a  t.d.l.c. group $G$ is defined to have {\it type $\FP_n$ over $\RR$} ($n\in\N\cup\{\infty\}$) if the trivial discrete $\RG$-module $\RR$ admits a resolution $P\to\RR$ formed by finitely generated proper discrete permutation $\RG$-modules. This definition coincides with the one in \cite{it:ratdiscoh} when $\RR=\Q$ and proper discrete permutation modules are projective. On the other hand, $G$ will be said to have {\it type $\KP_n$} if  there is a projective resolution $P \to \RR$ in $\RGtop$ with $P_i$ a free k-$\RG$-module on a compact space for $i \leq n$. Since $\RGdis$ is an abelian subcategory of $\RGtop$, we may compare these notions: having type $\FP_n$ turns out to be equivalent to having type $\KP_n$ by Theorem~\ref{fpnkpn}.
 
By analogy with the abstract case, we also consider $G$-spaces and introduce {\it type $\F_n$} and {\it  type $\K_n$} over $\RR$. These notions generalise  compact presentability to dimension $n>2$; see \S~\ref{s:comparison} for definitions and details. Theorem~\ref{thm:fnkn} shows that type $\F_n$ implies type $\K_n$ but the converse remains open.
 
 Many well-known properties that hold for abstract groups satisfying some finiteness properties can be transferred to the context of t.d.l.c. groups:
 \begin{itemize}
\item $\F_n$ (resp. $\K_n$) implies $\FP_n$ (resp. $\KP_n$) over $\RR$;
\item  $\FP_n$ implies $\F_n$ over $\Z$ if $G$ is compactly presented by Proposition~\ref{prop:FvsFP};
\item finiteness properties behave well with respect to short exact sequences of modules (see Proposition~\ref{prop:KPn} and Corollary~\ref{cor:FPn}) and extensions of t.d.l.c. groups (see Theorem \ref{thm:LHS});
\end{itemize}
and some can be even improved; see, for example, Remark~\ref{rem:improve}.
 
In addition, various famous criteria for finiteness properties -- such as Bieri's criterion and Brown's criteria -- find an analogue in the context of t.d.l.c. groups as well; see Theorem~\ref{thm:Bieri}, Theorem~\ref{thm:Brown1} and Theorem~\ref{thm:Brown3}. Indeed, the whole of Section~\ref{s:criteria} deals with all these criteria. Though these results remain true, the proofs of their discrete analogues do not always carry over directly to our situation (this occurs for Theorem~\ref{thm:Brown1}, for example), and some additional work is required.

Sections \ref{s:qii} and \ref{s:gwp} present several applications for the finiteness criteria described above. More precisely, in Section~\ref{s:qii} we answer  a question  by N. Petrosyan by proving Theorem~\ref{thm:quasiretract} and deducing that being of type $\FP_n$ (or $\F_n$) is a quasi-isometric invariant property of a t.d.l.c. group $G$. Furthermore,  we introduce in Section~\ref{s:gwp} the notion of  graph-wreath product in the category of t.d.l.c. groups and address finiteness properties for such a construction by extending results from \cite{BCK} and \cite{KropMart}.

Finally, in Section \ref{hdcd} we consider homological and cohomological dimension for t.d.l.c. groups, and prove some basic properties.
\section{A bit more on the categories $\RGtop$ and $\RGdis$}

In this paper $\RR$ will always be a commutative ring, with the discrete topology. (In practice, we can think of $\RR$ as being either $\Z$ or $\Q$.) By $G$  we will always denote a t.d.l.c. group - unless stated differently - and the subgroups of $G$ will usually be considered to be closed. The category of abstract left $\RG$-modules and their homomorphisms is denoted by $\RGmod$, while the objects of $\modRG$ are the abstract right $\RG$-modules. Subsequently, the category $\RGdis$ (resp. $\disRG$) indicates the full subcategory of $\RGmod$ (resp. $\modRG$) whose objects are the discrete left (resp. right) $\RG$-modules. Recall that an abstract $\RG$-module $M$ is said to be {\it discrete} if the action $G \times M \to M$ is continuous with the discrete topology on $M$, or equivalently if the pointwise stabiliser $G_m$, $m\in M$, is an open subgroup of $G$ for every element $m$ of $M$. The category $\RGdis$ (resp. $\disRG$) is an abelian category with enough injectives. All the upcoming properties and definitions stated for left modules find an analogue for right modules as well.

\subsection{Discrete permutation $\RG$-modules}\label{ss:perm}

Suppose $G$ acts continuously on a discrete left $G$-set $\Omega$, i.e., the stabiliser $G_\omega=\{g\in G\mid g\cdot\omega=\omega\}$ is open for all $\omega\in\Omega$. The $\RR$-module $\RR[\Omega]$ -- freely spanned by $\Omega$ -- carries canonically the structure of a discrete left $\RG$-module which is then defined to be a {\it discrete permutation $\RG$-module}. More precisely, as $\RR[G]$-modules, 
\begin{equation}
\RR[\Omega]\cong\coprod_{\omega\in\ca R}\RR[G/G_\omega],
\end{equation}
where $\ca R$ is a set of representatives of the $G$-orbits in $\Omega$. We call  {\it proper} those discrete permutation $\RG$-modules with compact stabilisers. A left resolution
$$\xymatrix{\cdots\ar[r]&P_n\ar[r]&P_{n-1}\ar[r]&\cdots\ar[r]&P_0\ar[r]&\RR\ar[r]&0}$$
of $\RR$ in $\RGdis$ will be called a ({\it proper}) {\it  discrete permutation resolution} of $\RR$ if each module $P_n$ is a (proper) discrete permutation $\RR[G]$-module.

\medskip

Let $\RR=\Q$. In this case, every proper discrete permutation $\QG$-module $\Q[\Omega]$ is a projective object in $\QGdis$. More precisely, one has the following useful characterisation.
\begin{fact}[\protect{\cite[Corollary~3.3]{it:ratdiscoh}}] 
Let $G$ be a t.d.l.c. group. A discrete $\QG$-module $M$ is projective if and only if $M$ is a direct summand of a proper discrete permutation $\QG$-module. 
\end{fact}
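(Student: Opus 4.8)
The plan is to reduce the statement to two facts about proper discrete permutation modules: that they are projective in $\QGdis$, and that there are enough of them, in the sense that every discrete $\QG$-module is a quotient of one. Granting these, both implications follow by the standard argument. For the ``if'' direction, a direct summand of a projective object is projective, so it suffices that proper discrete permutation modules themselves be projective. For the ``only if'' direction, given a projective $M$ I would choose a surjection $\pi\colon P\to M$ with $P$ a proper discrete permutation module; since $M$ is projective, $\iid_M$ lifts along $\pi$ to a section $s\colon M\to P$ with $\pi s=\iid_M$, exhibiting $M$ as a direct summand of $P$.

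First I would establish projectivity of a single module $\Q[G/U]$ with $U$ compact open. A $\QG$-homomorphism $\Q[G/U]\to M$ is uniquely determined by the image of the coset $eU$, which may be any $U$-fixed element of $M$; hence there is a natural isomorphism $\Hom_{\QG}(\Q[G/U],M)\cong M^U$. So projectivity of $\Q[G/U]$ amounts to exactness of the $U$-invariants functor $M\mapsto M^U$ on $\QGdis$. Left exactness is automatic; for right exactness, given a short exact sequence $0\to A\to B\to C\to 0$ in $\QGdis$ and $c\in C^U$, I would lift $c$ to some $b\in B$, observe that by discreteness $b$ is fixed by an open, hence finite-index, subgroup $V\le U$, and then average to $\tilde b=[U:V]^{-1}\sum_{uV\in U/V}u\cdot b$, which is a $U$-fixed lift of $c$ because $c$ is already $U$-fixed. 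A general proper discrete permutation module is a coproduct of such modules $\Q[G/U]$, and since $\Hom$ out of a coproduct is the product of the $\Hom$-functors, and products of exact functors are exact, the whole module is projective.

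Next I would check that proper discrete permutation modules generate $\QGdis$. Given any $M\in\QGdis$ and any $m\in M$, the stabiliser $G_m$ is open, so by van Dantzig's theorem it contains a compact open subgroup $U_m$. The assignment $eU_m\mapsto m$ then defines a $\QG$-homomorphism $\Q[G/U_m]\to M$ whose image contains $m$. Taking the coproduct over all $m\in M$ yields a map $\coprod_{m\in M}\Q[G/U_m]\to M$ that is surjective, and its source is a proper discrete permutation module of the form $\Q[\Omega]$. Combined with the lifting argument above, this completes the ``only if'' direction and hence the proof.

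The hard part is the exactness of the invariants functor, that is, the averaging step: it requires inverting the finite indices $[U:V]$, which is exactly what $\Q$ provides and what fails for $\RR=\Z$. This is the precise reason, already flagged in the introduction, why proper discrete permutation modules are projective over $\Q$ but not over general rings.
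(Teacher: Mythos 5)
Your proposal is correct and is essentially the standard argument behind the cited result: the paper itself only quotes \cite[Corollary~3.3]{it:ratdiscoh}, and the proof there runs exactly along your lines, namely that $\Hom_{\QG}(\Q[G/U],-)\cong(-)^U$ is exact because averaging over the finite-index open subgroup $V\le U$ is possible over $\Q$, that coproducts of such modules are projective generators of $\QGdis$ by van Dantzig, and then the usual splitting argument. Your closing remark correctly identifies the invertibility of the indices $|U:V|$ as the step that fails over $\Z$, which is precisely the point the paper emphasises.
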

Therefore, by van Dantzig's theorem, one deduces that $\QGdis$ has also enough projectives. 
Note, though, that the same is not true in general for proper discrete permutation $\RG$-modules: the divisibility of $\Q$ is crucial to the argument. Moreover, in the special case of $G$ being a profinite group, one has that every discrete $\QG$-module is both injective and projective by \cite[Proposition 3.1]{it:ratdiscoh}.

\subsection{Limits and Colimits}

The abelian category $\RGdis$ has all small colimits: $\RGdis$ is a full subcategory of $\RGmod$ closed under taking colimits and $\RGmod$ has all small colimits.
On the other hand, limits in $\RGdis$ are not created in $\RGmod$ in general. For example, let $G=\Q_p$ be the field of $p$-adic rationals for some prime $p$ and $\{U_n=p^n\Z_p\mid n\in\N\}$ the nested family of open balls centred at $0$. The product
$\prod_{n\in\N} \RR[G/U_n]$ is not discrete: the element $(U_n)_{n\in\N}$ has stabiliser equal to $\bigcap_n U_n=1$, which is not open in $G$.

However, for arbitrary t.d.l.c. groups, $\RGdis$ does always admit all limits. To this end,  for $A\in\ob(\RGdis)$, let
\begin{equation}
\dd(A)=\{a\in A\mid \stab_G(a)\ \text{open in}\ G\},
\end{equation}
be the largest discrete $\RG$-submodule of $A$. The ``discretising" functor
\begin{equation}
\dd\colon\RGmod\to\RGdis
\end{equation}
 is a covariant additive left exact functor, which is the right-adjoint of the forgetful functor $\ff\colon\RGdis\to\RGmod$; see \cite[\S~2.2]{it:ratdiscoh} for the details. This adjunction makes $\RGdis$ a coreflective subcategory of $\RGmod$, so it has small limits because $\RGmod$ does: explicitly, limits in $\RGdis$ can be constructed by applying $\dd$ to limits in $\RGmod$, and we will write $\dd \lim$ for such limits to avoid confusion; $\lim$ will represent limits in $\RGmod$.
\begin{fact}\label{fact:lim} Let $G$ be a t.d.l.c. group. Then products in $\QGdis$ are exact.
\end{fact}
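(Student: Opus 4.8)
The plan is to reduce the statement to the assertion that the product functor on $\QGdis$ preserves epimorphisms, and then to establish this by lifting along the surjection using the injectivity of discrete $\QU$-modules over a compact open subgroup $U$. Recall first that the product of a family $\{M_i\}$ in $\QGdis$ is computed as $\dd\prod_i M_i$, where $\prod_i M_i$ denotes the product taken in $\QGmod$. Being a right adjoint (to the diagonal functor), the product functor on $\QGdis$ preserves all limits and is in particular left exact; so the only thing at issue is right exactness. Since $\QGdis$ is abelian with cokernels computed as in $\QGmod$, its epimorphisms are exactly the surjective maps, and it therefore suffices to show that a family of surjections $f_i\colon B_i\to C_i$ in $\QGdis$ yields a surjection $\dd\prod f_i$. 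Writing $A_i=\ker f_i$ (again a discrete $\QG$-module) and using that products are exact in the module category $\QGmod$, the sequence $0\to\prod_i A_i\to\prod_i B_i\to\prod_i C_i\to 0$ is exact in $\QGmod$; applying the left-exact functor $\dd$ keeps $0\to\dd\prod A_i\to\dd\prod B_i\to\dd\prod C_i$ exact, so everything comes down to surjectivity of $\dd\prod f_i$.

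To prove this surjectivity, I would take an arbitrary element $(c_i)\in\dd\prod C_i$. By definition $\stab_G((c_i))$ is open, so by van Dantzig's theorem it contains a compact open subgroup $U$, whence $c_i\in C_i^U$ for every $i$. Restricting along $U\hookrightarrow G$ turns $0\to A_i\to B_i\to C_i\to 0$ into a short exact sequence of discrete $\QU$-modules. Here the key input enters: since $U$ is profinite, every object of $\QUdis$ is injective, so in particular $A_i$ is injective. Hence the inclusion $A_i\hookrightarrow B_i$ splits $U$-equivariantly, which furnishes a $\QU$-linear section $s_i\colon C_i\to B_i$ of $f_i$.

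I would then set $b_i:=s_i(c_i)$. As $s_i$ is $U$-equivariant and $c_i$ is $U$-fixed, each $b_i$ is $U$-fixed and satisfies $f_i(b_i)=c_i$. Consequently $(b_i)\in\prod B_i$ is fixed by $U$, so $\stab_G((b_i))\supseteq U$ is open and $(b_i)\in\dd\prod B_i$; moreover $\dd\prod f_i\,((b_i))=(c_i)$. This exhibits the required preimage and completes the proof.

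The main obstacle is precisely the failure of $\dd$ to be right exact: an arbitrary preimage of $(c_i)$ in $\prod B_i$ need not have open stabiliser, so surjectivity is not inherited for free from $\QGmod$. The whole point is to produce a preimage that is fixed by the open subgroup $U$, and the obstruction to doing so lives in $H^1(U,A_i)$. Its vanishing is exactly the injectivity of discrete $\QU$-modules over the profinite group $U$, and this is where the divisibility of $\Q$ is indispensable; the analogue fails over $\RR=\Z$, where discrete $\Z[U]$-modules need not be injective.
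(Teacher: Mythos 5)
Your proof is correct, but it takes a genuinely different route from the paper. The paper disposes of this in one line by invoking the general categorical fact that products are exact in any abelian category with enough projectives (for epimorphy of $\prod f_i$, lift a projective cover $P\twoheadrightarrow\dd\prod C_i$ componentwise through each $f_i$ and assemble); the input specific to $\Q$ is thus the existence of enough projectives in $\QGdis$, i.e.\ the projectivity of proper discrete permutation $\QG$-modules. You instead work on the injective side: you reduce to surjectivity of $\dd\prod f_i$, and produce a $U$-fixed preimage of a $U$-fixed tuple by splitting each sequence $U$-equivariantly, using that every object of $\QUdis$ is injective for $U$ profinite. Both arguments are ultimately fuelled by the same phenomenon --- the semisimple behaviour of discrete $\Q[U]$-modules over compact $U$, hence the divisibility of $\Q$ --- but yours has the merit of being self-contained at the element level, of isolating precisely where the obstruction lives (a class in $H^1(U,A_i)$) and why the statement fails over $\Z$, whereas the paper's argument is shorter and immediately portable to any other category where enough projectives have already been established. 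One small point worth making explicit in your write-up: the induced map $\dd\prod B_i\to\dd\prod C_i$ is by construction the restriction of $\prod f_i$ in $\QGmod$, which is what licenses the final step $(b_i)\mapsto(c_i)$; you use this implicitly and it is true, but it deserves a sentence.
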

\begin{proof}
This holds in any abelian category with enough projectives.
\end{proof}


\subsection{The rational discrete standard $G$-bimodule}

In the context of discrete $\QG$-modules, there is an important discrete $\QG$-bimodules which can play some of the role of the group algebra: the so-called {\it rational discrete standard $G$-bimodule} $\biB(G)$. In the case that $G$ is unimodular, $\biB(G)$ turns out to be isomorphic but not canonically isomorphic to the $\Q$-vector space $\ca C_c(G,\Q)$ of continuous functions from $G$ to $\Q$  with compact support; see \cite{it:ratdiscoh} for the results. Here we recall the construction of $\biB(G)$, which will play a crucial role throughout this paper.

\medskip

Let $\CO(G)=\{U\leq G\mid U\ \text{compact and open}\}$.
For $U,V\in\CO(G)$, $V\subseteq U$, one has  an injective map 
\begin{equation}
\label{eq:biB1}
\eta_{U,V}\colon\Q[G/U]\longrightarrow\Q[G/V],\ \ \ 
\eta_{U,V}(x\,U)=\frac{1}{|U:V|}\sum_{r\in\caR} x\, r\, V,\ \ \ x\in G,
\end{equation}
of discrete $\QG$-modules.
By construction, one has that $\eta_{U,W}=\eta_{V,W}\circ\eta_{U,V}$ whenever $W\in\CO(G)$ with $W\subseteq V\subseteq U$. Let 
\begin{equation}
\biB(G)=\varinjlim_{U\in\CO(G)} (\Q[G/U],\eta_{U,V}).
\end{equation}
Then, by definition, $\biB(G)$ is a discrete left $\QG$-module and 
the assignment
\begin{equation}
\label{eq:biB2}
(xU)\cdot g=xg\,U^g, \ \ g,x\in G,\ \ U\in\CO(G)\ \text{and}\ U^g=g^{-1} U g,
\end{equation}
defines a right $\QG$-module structure on $\biB(G)$ that commutes with the left structure, i.e., $\biB(G)$ is a discrete $\QG$-bimodule, which is then called {\it rational discrete standard $G$-bimodule}.

\begin{prop}
\label{prop:bimodulesummand}
Let $G$ be a t.d.l.c. group and $H$ an open subgroup. Then as $\QH$-bimodules $\biB(H)$ is (isomorphic to) a direct summand of $\biB(G)$.
\end{prop}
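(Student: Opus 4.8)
The plan is to exhibit $\biB(G)$ as an internal direct sum of $\biB(H)$ and a complementary $\QH$-sub-bimodule, by splitting each term of the defining direct system according to the left $H$-orbits on $G/U$ and then passing to the colimit.

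First I would record two facts about $\CO(H)$. Since $H$ is open (hence closed) in $G$, a subgroup $U \subseteq H$ is compact open in $H$ exactly when it is compact open in $G$, so $\CO(H) = \{U \in \CO(G) \mid U \subseteq H\}$. Moreover $\CO(H)$ is cofinal: for every $U \in \CO(G)$ the intersection $U \cap H$ is a subgroup which is open (intersection of opens) and compact (closed in the compact set $U$), hence lies in $\CO(H)$ and satisfies $U \cap H \subseteq U$. Consequently the colimit defining $\biB(G)$ is unchanged if taken over $\CO(H)$, i.e., $\biB(G) = \varinjlim_{U \in \CO(H)} \Q[G/U]$, and this identification is compatible with the $\QG$-module (hence $\QH$-bimodule) structure.

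Next, for each $U \in \CO(H)$ I would split $\Q[G/U]$ using the left $H$-action on $G/U$. The $H$-orbit of the trivial coset $U$ is $HU/U = H/U$, whose $\Q$-span is the permutation module $\Q[H/U]$; writing $C_U$ for the span of the remaining cosets $gU$ with $g \notin H$, this gives a decomposition $\Q[G/U] = \Q[H/U] \oplus C_U$ of left $\QH$-modules. The key compatibility to check is that the connecting maps respect this splitting: from $\eta_{U,V}(xU) = \frac{1}{|U:V|} \sum_{r \in \caR} xrV$, with $r$ ranging over representatives of $V$ in $U \subseteq H$, one reads off that $xr \in H$ precisely when $x \in H$, so $\eta_{U,V}$ carries $\Q[H/U]$ into $\Q[H/V]$ and $C_U$ into $C_V$. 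Passing to the filtered colimit over $\CO(H)$, which commutes with the finite direct sum, yields $\biB(G) = \biB(H) \oplus C$ as left $\QH$-modules, where $C = \varinjlim_{U \in \CO(H)} C_U$.

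Finally I would verify that this is a decomposition of $\QH$-bimodules, i.e., that both summands are stable under the right action $(xU)\cdot g = xg\,U^g$ for $g \in H$. Here $U^g = g^{-1}Ug \subseteq H$, so $U^g \in \CO(H)$, and since $g \in H$ the conditions $x \in H$ and $xg \in H$ are equivalent; thus the class of $xU$ with $x \in H$ is sent to the class of $xg\,U^g$ with $xg \in H$, which lies in $\biB(H)$, while the class of $xU$ with $x \notin H$ is sent into $C$. The resulting left–right $H$-action on the summand $\varinjlim_U \Q[H/U]$ is exactly the one defining $\biB(H)$, so this summand is $\biB(H)$ as a $\QH$-bimodule. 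The main obstacle is precisely this last bookkeeping: unlike the left action, the right action by $g$ does not fix the index but moves $U$ to $U^g$, so the splitting must be shown to be preserved by an index-shifting action on the colimit rather than term by term. The cofinality of $\CO(H)$ in $\CO(G)$ is what makes the argument possible, since it lets us index the entire direct system computing $\biB(G)$ by subgroups of $H$.
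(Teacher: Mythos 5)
Your proof is correct and takes essentially the same route as the paper's: reindex the colimit over the cofinal subposet $\CO(H)\subseteq\CO(G)$, split each $\Q[G/U]$ as $\Q[H/U]\oplus\Q[(G-H)/U]$, check compatibility with the transition maps $\eta_{U,V}$ and with the right action, and pass to the colimit. The only difference is that you spell out the compatibility verifications the paper dismisses as ``easily seen.''
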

\begin{proof}
We may define $\biB(G)$ as a direct limit $\varinjlim \Q[G/U]$ where $U$ ranges over the compact open subgroups contained in $H$, because this is cofinal in the poset of all compact open subgroups. For each compact open subgroup $U$ of $H$, as left $\QH$-modules $\Q[H/U]$ is a direct summand of $\Q[G/U]$, and this decomposition into direct summands $\Q[H/U] \oplus \Q[(G - H)/U]$ is easily seen to be compatible with \eqref{eq:biB1} and \eqref{eq:biB2} with respect to $H$, so as $\QH$-bimodules the direct limit $\varinjlim_{U \in \CO(H)} \Q[H/U] = \biB(H)$ is a direct summand of $\biB(G)$.
\end{proof}

\begin{prop}\label{prop:biG}
Let $G$ be a t.d.l.c. group. For every  closed normal subgroup $N$ of $G$, one has an isomorphism
$$\Q\otimes_N \biB(G)\cong\biB(G/N),$$
of $\Q[G/N]$-bimodules. 
\end{prop}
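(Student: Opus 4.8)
The plan is to push the coinvariants functor $\Q\otimes_N(-)=\Q\otimes_{\Q[N]}(-)$ (taken with respect to the left $N$-action) through the direct limit defining $\biB(G)$, reduce to the case of a single permutation module, and then recognise the resulting direct system as the one defining $\biB(G/N)$. Since $\Q\otimes_N(-)$ is a left adjoint it preserves all colimits, and the colimit $\biB(G)=\varinjlim_{U\in\CO(G)}\Q[G/U]$ computed in $\QGdis$ agrees with the one in $\QGmod$; hence $\Q\otimes_N\biB(G)\cong\varinjlim_{U\in\CO(G)}\bigl(\Q\otimes_N\Q[G/U]\bigr)$. For a single compact open $U$, the left $N$-coinvariants of the permutation module $\Q[G/U]$ are freely spanned by the double cosets $N\backslash G/U$, and since $N$ is normal we have $NgU=g(UN)$, giving a natural isomorphism $\Q\otimes_N\Q[G/U]\cong\Q[G/UN]=\Q[(G/N)/\overline U]$, where $\overline U=UN/N$. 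I write $q_U\colon\Q[G/U]\to\Q[G/UN]$ for the induced quotient map $gU\mapsto g(UN)$.

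The key step is to verify that these maps $q_U$ intertwine the transition maps $\eta_{U,V}$ of $\biB(G)$ with the transition maps $\eta_{\overline U,\overline V}$ of $\biB(G/N)$, i.e.\ that $q_V\circ\eta_{U,V}=\eta_{\overline U,\overline V}\circ q_U$ for $V\subseteq U$. Evaluating the left-hand side on $xU$ produces $\tfrac{1}{|U:V|}\sum_{r}xr(VN)$, where $r$ runs over a transversal of $V$ in $U$; grouping the terms according to the value of the coset $xr(VN)\in G/VN$, each coset $xs(VN)$, for $s$ ranging over a lift to $U$ of a transversal of $\overline V$ in $\overline U$, is attained exactly $m:=|(VN\cap U):V|$ times. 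Since $|U:V|=|\overline U:\overline V|\cdot m$ (because $U/(U\cap VN)\cong UN/VN=\overline U/\overline V$), the factor $m$ cancels and the left-hand side equals $\tfrac{1}{|\overline U:\overline V|}\sum_{s}xs(VN)=\eta_{\overline U,\overline V}(q_U(xU))$. This index-and-multiplicity bookkeeping is the main obstacle: it is exactly where the normalising coefficients $1/|U:V|$ in \eqref{eq:biB1} conspire with the normality of $N$, and it is the one place where a genuine computation, rather than formal nonsense, is required.

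Granting this compatibility, the $q_U$ assemble into a map of direct systems and hence a map on colimits $\Phi\colon\Q\otimes_N\biB(G)\to\varinjlim_{U\in\CO(G)}\Q[(G/N)/\overline U]$. To identify the target with $\biB(G/N)=\varinjlim_{W\in\CO(G/N)}\Q[(G/N)/W]$, I would check that the family $\{\overline U=UN/N:U\in\CO(G)\}$ is cofinal in $\CO(G/N)$: given a compact open $W\leq G/N$, its preimage is open in $G$ and, by van Dantzig's theorem, contains a compact open $U$, whence $\overline U\subseteq W$. Cofinality then yields $\Phi$ as an isomorphism of $\Q[G/N]$-modules. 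Finally I would verify that $\Phi$ is a bimodule map: it is left $G/N$-equivariant because $q_U(nx\,U)=x(UN)=q_U(xU)$ (using $x^{-1}nx\in N\subseteq UN$), and it is right $G$-equivariant through $G\to G/N$ because, using \eqref{eq:biB2} on both sides, $\Phi([xU]\cdot g)=\overline{xg}\,\overline{U^{g}}$ while $\Phi([xU])\cdot\overline g=x\overline g\,\overline U{}^{\overline g}$ with $\overline U{}^{\overline g}=\overline{U^{g}}$, so the two agree. As the right $G$-action on $\biB(G/N)$ already factors through $G/N$, transporting it along $\Phi$ shows the right $N$-action on $\Q\otimes_N\biB(G)$ is trivial, and $\Phi$ is the desired isomorphism of $\Q[G/N]$-bimodules.
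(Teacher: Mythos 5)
Your proof is correct and follows essentially the same route as the paper's: apply $\Q\otimes_N(-)$ termwise to the direct limit defining $\biB(G)$, identify $\Q\otimes_N\Q[G/U]\cong\Q[G/UN]$ using normality of $N$, and conclude by cofinality of $\{UN/N\}$ in $\CO(G/N)$. The only difference is that you make explicit the compatibility $q_V\circ\eta_{U,V}=\eta_{\overline U,\overline V}\circ q_U$ (with the index bookkeeping $|U:V|=|\overline U:\overline V|\cdot|(VN\cap U):V|$) and the bimodule equivariance, both of which the paper leaves implicit; your verifications are correct.
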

\begin{proof} Let $N$ be a closed normal subgroup of $G$. Firstly, note that the set $\CO_N(G/N)=\{UN/N\mid U\in\CO(G)\}$
is a local basis at $N$ in $G/N$ consisting of compact open subgroups, i.e., $\CO_N(G/N)$  is cofinal in $\CO(G/N)$ . Thus
\begin{align*}
\Q\otimes_N \biB(G) &\cong \Q\otimes_N\big( \varinjlim_{U\in\ca \CO(G)} \Q[G/U]\big)\quad\text{(by definition)} \\
&\cong \varinjlim_{U\in\CO(G)} \Q[N\backslash G/U]\quad\text{(since $\Q\otimes_N\argu$ commutes with direct limits)}\\
& \cong \varinjlim_{U\in\CO(G)} \Q[G/UN]\quad\text{ (for $N\backslash G/U= G/UN$ since $N$ is normal})\\
& \cong \varinjlim_{U\in\CO(G)} \Q[\frac{G/N}{UN/N}]\\
&\cong \biB(G/N)\quad\text{(for $\CO_N(G/N)$  is cofinal in $\CO(G/N)$)}.
\end{align*}
\end{proof}
We denote $\Q\otimes_N\biB(G)$ by $\biB(G)_N$.
\subsection{Flat discrete $\RG$-modules}\label{ss:flat}

A discrete left $\RG$-module $M$ is said to be {\it flat} if the functor
\begin{equation}
\argu\otimes_G M\colon\RGdis\to\Rmod,
\end{equation}
is exact. 

\medskip

Let $\RR=\Q$. Since every short exact sequence of discrete $\QG$-modules over a profinite group splits (cf. the end of \S~\ref{ss:perm}) and $\argu\otimes_G\argu$ preserves splittings in each argument, it is not difficult to prove that every proper discrete permutation $\QG$-module is flat. More generally, one has the following.
\begin{fact}\label{fact:flat} Let $G$ be a t.d.l.c. group.
\begin{enumerate}
\item Every projective discrete $\QG$-module is flat.
\item $\biB(G)$ is flat in $\QGdis$.
\end{enumerate}
\end{fact}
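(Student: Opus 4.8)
The plan is to derive both parts from a single substantive input, namely the flatness of proper discrete permutation $\QG$-modules recorded in the paragraph preceding the statement, combined with the two formal facts that flatness is inherited by direct summands and by filtered colimits.

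For the first assertion, let $P$ be a projective object of $\QGdis$. By the Fact cited above (\cite[Corollary~3.3]{it:ratdiscoh}), $P$ is a direct summand of a proper discrete permutation $\QG$-module $Q$, say $Q\cong P\oplus P'$. Since $Q$ is flat, the functor $\argu\otimes_G Q$ is exact; but it decomposes naturally as $(\argu\otimes_G P)\oplus(\argu\otimes_G P')$, and a short exact sequence is sent to the direct sum of its images under the two summand functors. As a direct sum of two sequences of $\Q$-modules is exact if and only if each summand is, we conclude that $\argu\otimes_G P$ is exact, i.e.\ $P$ is flat.

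For the second assertion, recall $\biB(G)=\varinjlim_{U\in\CO(G)}\Q[G/U]$ with the transition maps $\eta_{U,V}$. This system is filtered: given $U_1,U_2\in\CO(G)$ the intersection $U_1\cap U_2$ is again compact open and receives maps $\eta_{U_1,U_1\cap U_2}$ and $\eta_{U_2,U_1\cap U_2}$. Each $\Q[G/U]$ is a (single-orbit) proper discrete permutation module, hence flat. Since $\argu\otimes_G\argu$ commutes with colimits in the $\biB(G)$-variable, for any short exact sequence $S$ in $\QGdis$ there is a natural isomorphism $S\otimes_G\biB(G)\cong\varinjlim_{U}(S\otimes_G\Q[G/U])$. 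Every term of the right-hand filtered system is exact by flatness of $\Q[G/U]$, and filtered colimits of $\Q$-modules are exact, so the colimit is exact as well. Hence $\argu\otimes_G\biB(G)$ is exact and $\biB(G)$ is flat.

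The only points requiring care are the bookkeeping ones just mentioned: that $\CO(G)$ yields a filtered system, that the tensor functor commutes with the defining direct limit, and that filtered colimits preserve exactness—which holds because these colimits are computed in the ambient module categories $\QGmod$ and $\Qmod$, where filtered colimits are exact. The genuinely content-bearing step, flatness of the building blocks $\Q[G/U]$, is already in hand from the semisimplicity of discrete $\QU$-modules over the profinite group $U$; so beyond assembling these ingredients I anticipate no real obstacle.
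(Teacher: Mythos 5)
Your proof is correct and follows exactly the route the paper intends: the paper states this Fact without proof, relying on the immediately preceding observation that proper discrete permutation $\Q[G]$-modules are flat, together with the standard facts that flatness passes to direct summands (for projectives, via \cite[Corollary~3.3]{it:ratdiscoh}) and to filtered colimits (for $\biB(G)=\varinjlim_{U}\Q[G/U]$). Your bookkeeping on the filteredness of $\CO(G)$, the commutation of $\otimes_G$ with colimits, and the exactness of filtered colimits of $\Q$-modules is all accurate, so there is nothing to add.
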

Moreover, one has the following characterisation of flat discrete $\QG$-modules that resembles Lazard's theorem for abstract modules.
\begin{thm}[Lazard-type Theorem]\label{thm:lazard} Let $G$ be a t.d.l.c. group and $M\in\textup{ob}(\QGdis)$. Thus the following are equivalent:
\begin{enumerate}[$(a)$]
\item the discrete $\QG$-module $M$ is flat;
\item for every morphism $f\colon P_1\to P_2$ of finitely generated proper discrete permutation $\QG$-modules and every morphism $\phi_2\colon P_2\to M$ such that $\phi_2\circ f=0$, there exists a morphism $f'\colon P_2\to P_3$ of finitely generated proper discrete permutation $\QG$-modules and a morphism $\phi_3\colon P_3\to M$ such that $f'\circ\ f=0$ and $\phi_2=\phi_3\circ f'$. In other words, the following lift occurs
\begin{equation}
\xymatrix{
                       &                      &P_3\ar[dr]^{\phi_3}                     &\\
P_1\ar[r]^{f}& P_2\ar@{-->}[ur]^{f'}\ar[dr]\ar[rr]^{\qquad\phi_2}&                                            &M\\
                       &                      &\textup{coker}{(f)}\ar[ur]\ar@{-->}[uu]          &
                       ,}
\end{equation}
i.e., every map $\alpha\colon \textup{coker}{(\alpha)}\to M$ factors through some $\phi'$.
\item $M$ is a direct limit of finitely generated proper discrete permutation $\QG$-modules.
\end{enumerate}
\end{thm}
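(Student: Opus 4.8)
The plan is to prove the cycle of implications $(c)\Rightarrow(a)\Rightarrow(b)\Rightarrow(c)$, following the overall shape of the classical Lazard argument but substituting proper discrete permutation $\QG$-modules for free modules throughout. The implication $(c)\Rightarrow(a)$ should be the easiest: tensor products commute with direct limits (since $\argu\otimes_G M$ is a left adjoint, or by direct inspection), direct limits are exact in the category of $\Q$-modules, and each finitely generated proper discrete permutation $\QG$-module is flat by Fact~\ref{fact:flat}(1) together with the fact stated at the end of \S\ref{ss:perm} that such modules are projective. Hence a direct limit of flat modules is flat, and $(a)$ follows.

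For $(a)\Rightarrow(b)$, I would reformulate condition $(b)$ as the statement that whenever $\phi_2\colon P_2\to M$ kills the image of $f\colon P_1\to P_2$, the induced map $\coker(f)\to M$ factors through a proper discrete permutation module. Given flatness of $M$, the idea is to apply the exact functor $\argu\otimes_G M$ to a presentation and extract the required factorisation from the resulting exactness, exactly as one does when proving that flat modules satisfy the finitely-presented lifting property. The technical point specific to our setting is that the ``free'' objects at each stage must be chosen to be \emph{finitely generated proper} discrete permutation $\QG$-modules; this is where van Dantzig's theorem and the projectivity characterisation are used to guarantee that enough such objects exist and that maps between them can be arranged.

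The implication $(b)\Rightarrow(c)$ is the heart of the argument and I expect it to be the main obstacle. Here one must realise $M$ as a direct limit of finitely generated proper discrete permutation modules, and the natural approach is to build an indexing category whose objects are pairs $(P,\psi)$ with $P$ a finitely generated proper discrete permutation $\QG$-module and $\psi\colon P\to M$ a morphism, with morphisms the obvious commuting triangles. One shows that this category is filtered --- this is precisely where condition $(b)$ is needed, as it supplies the amalgamation of two maps into $M$ through a common proper permutation module after killing the relevant kernel --- and that the canonical map $\varinjlim P\to M$ is an isomorphism. Surjectivity is straightforward because every element of the discrete module $M$ has open (indeed one can arrange compact) stabiliser and so lies in the image of some finitely generated proper permutation module; injectivity is the delicate part and is exactly what condition $(b)$ is designed to deliver, by ensuring that any element of the limit mapping to $0$ in $M$ is already annihilated further along the system.

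The subtle divergence from the classical proof is that we cannot use genuinely free modules --- proper discrete permutation modules play their role only because we work over $\Q$, where Fact~\ref{fact:flat} and the splitting of discrete $\QG$-modules over profinite groups make them projective and flat. I would therefore take care, at each use of a ``lifting'' or ``extension'' property, to invoke the projectivity of proper discrete permutation $\QG$-modules rather than freeness, and to verify that the filtered index category is \emph{small} enough (using that $M$ is a set and stabilisers are open) for the colimit to exist within $\QGdis$.
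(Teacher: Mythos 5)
Your implications $(c)\Rightarrow(a)$ and $(b)\Rightarrow(c)$ are sound and essentially coincide with the paper's argument: the former is exactly the paper's (direct limits of flat modules are flat, together with Fact~\ref{fact:flat}), and the latter --- realising $M$ as a filtered colimit over the comma category of pairs $(P,\psi)$, using $(b)$ to coequalise parallel arrows and to push elements of $\ker\psi$ to zero further along the system --- is the same Lazard/Govorov mechanism that the paper invokes by citing \cite[Lemma 2]{Shannon}.

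The gap is in $(a)\Rightarrow(b)$. You propose to ``apply the exact functor $\argu\otimes_G M$ to a presentation and extract the required factorisation,'' but exactness of $\argu\otimes_G M$ applied to a presentation of $\coker(f)$ does not by itself yield a factorisation of $\coker(f)\to M$ through a finitely generated proper discrete permutation module. In the classical argument the factorisation is extracted from the natural isomorphism $\Hom_R(P,R)\otimes_R M\cong\Hom_R(P,M)$ for $P$ finitely generated free and $M$ flat: one identifies $\phi_2$ with a finite tensor $\sum k_i\otimes m_i$ lying in $K\otimes M$, where $K=\ker\bigl(\Hom(P_2,R)\to\Hom(P_1,R)\bigr)$, and reads off the intermediate free module from the finitely many $k_i$. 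The entire difficulty in the t.d.l.c.\ setting is that $\QG$ is not a discrete $\QG$-module, so there is no such dualising object available a priori, and this is precisely the point your plan does not address. The paper's proof substitutes the rational discrete standard bimodule $\biB(G)$ for the group algebra, uses the identity $\Hom_G(P,\biB(G))\otimes_G M\cong\Hom_G(P,M)$ of \cite[Prop.~4.6]{it:ratdiscoh} together with flatness of $M$ to write $\phi_2\sim\sum_i k_i\otimes_G m_i$ with $k_i\in K=\ker(f^*)$, covers the $k_i$ by a finitely generated proper discrete permutation \emph{right} module, and dualises back to obtain $P_3$, $f'$ and $\phi_3$. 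Without introducing $\biB(G)$ (or an equivalent dualising device) and verifying this $\Hom$--$\otimes$ identity for finitely generated proper discrete permutation modules, your step $(a)\Rightarrow(b)$ does not go through; projectivity of the permutation modules and van Dantzig's theorem, which you cite as the specific technical points, are not the crux here.
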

\begin{proof}[Proof of Theorem~\ref{thm:lazard}] $(a)\Rightarrow(b)$ Let $f\colon P_1\to P_2$ and $\phi_2\colon P_2\to M$ as in $(b)$. Consider the rational discrete standard $G$-bimodule $\biB(G)$ and let
\begin{equation}
f^*=\argu\circ f\colon \Hom_G(P_2,\biB(G))\to\Hom_G(P_1,\biB(G))
\end{equation}
denote the map - induced by $f$ - of discrete right $\QG$-modules (where the right action on $\Hom_G(\argu,\biB(G))$ is given by the right module structure of $\biB(G)$) and denote by $K$ the kernel of $f^*$. Notice that $\textup{Hom}_G(P,\biB(G))$ is a finitely generated discrete right $\QG$-module with compact stabiliser whenever $P$ is a left one. Since $M$ is flat, one has the following commutative diagram
\begin{equation}
\xymatrix{
0\ar[r]&K\otimes_G M\ar[r]\ar[dr]& \Hom_G(P_2,\biB(G))\otimes_G M\ar[r]_{f^*\otimes\textup{id}_M}\ar[d]^{\wr}& \textup{Hom}_G(P_1,\biB(G))\otimes_G M\ar[d]^{\wr}\\
         &                                       & \Hom_G(P_2,M) \ar[r]                                                    & \Hom_G(P_1,M)
}
\end{equation}
in $\Qmod$ (cf. \cite[Prop.~4.6]{it:ratdiscoh}). Since $\phi_2\circ f=0$, $\phi_2$ can regarded as an element of $K\otimes_G M$, i.e.,
\begin{equation}
\phi_2\sim \sum_{i\in I} k_i\otimes_G m_i,
\end{equation}
for some $k_i\in K,\ m_i\in M$ and $I$ finite. Now let $P_3^{\times}$ be a finitely generated discrete right permutation $\QG$-module with compact stabilisers mapping onto the $G$-submodule of $K$ generated by $\{k_i\mid i\in I\}$. Namely,
\begin{equation}
P_3^{\times}=\coprod_{i\in I} \Q[U_i\backslash G]\to \Hom_G(P_2,\biB(G)),\quad U_i\mapsto k_i\ \forall i\in I
\end{equation}
where $U_i\subset\stab_G(k_i)$ for each $i\in I$. Dualizing yields a map
\begin{equation}
f'\colon P_3=\coprod_{i\in I}\Q[G/U_i]\to P_2,
\end{equation}
by \cite[\S~4.4]{it:ratdiscoh}, such that $f'\circ f=0$. Finally, $\phi_3\colon P_3\to M$ is defined by $U_i\mapsto m_i$.

\noindent $(b)\Rightarrow(c)$ We may write $M$ as a direct limit of finitely presented modules in the usual way; see \cite[Lemma 2]{Shannon}. Now (b) is telling us precisely that every map from a finitely presented module to $M$ factors through a map from a finitely generated proper discrete permutation module to $M$, so (c) follows by the same argument as \cite[Lemma 2]{Shannon}.
\noindent$(c)\Rightarrow(a)$ Since direct limits of flat modules are flat, Fact~\ref{fact:flat} concludes the proof.
\end{proof}

\subsection{Rational discrete homology}

Following \cite{it:ratdiscoh}, for every discrete right $\QG$-module $B$ we denote by
\begin{equation}\label{def:dTor}
\dTor^G_k(B,\argu)\colon\QGdis\to\Qmod,\quad k\geq0,
\end{equation}
the left derived functors of the right exact functor $B\otimes_G\argu\colon\QGdis\to\Qmod$. Clearly, one has $\dTor^G_0(B,A)=B\otimes_G A$ for any discrete left $\QG$-module $A$. The $\dTor$-spaces can be computed using flat resolutions of either the first or the second argument but one may also use projective resolutions for this purpose. The {\it rational discrete homology} of $G$ is defined by
\begin{equation}\label{def:homology}
\dH_k(G,\argu)=\dTor^G_k(\Q,\argu),\quad k\geq0,
\end{equation}
where $\Q$ is the trivial right $\QG$-module. As a direct consequence of the definition one has the following properties:
\begin{enumerate}
\item If $P$ is projective, then $\dTor^G_k(C,P)=0$ for all $k\geq1$.
\item If $G$ is compact, then $\dH_k(G,\argu)=0$ for all $k\geq1$.
\end{enumerate}

Similarly, we define $\dd\Ext$-functors as the right derived functors of $\Hom$, these may be calculated by taking a projective resolution of the first argument or an injective resolution of the second, we define group cohomology by $\dH^k(G,-) = \dd\Ext_G^k(\Q,-)$, and this is trivial for $G$ compact and $k \geq 1$.

\subsection{Module structures on $\dTor$ and $\dExt$}

As for abstract groups, when $N$ is a (closed) normal subgroup of $G$, and $B$ is a discrete right $\QG$-module, we may think of $\dTor^N_k(B,-)$ as a functor $\QGdis \to _{\Q[G/N]}\mathbf{mod}$. The $G/N$-action is induced by the action on the tensor product: if $A \in \QGdis$, $G$ acts on $B \otimes_N A$ by 
$$g \cdot (b \otimes a) = bg^{-1} \otimes ga,$$ which is trivial for $g \in N$.  In fact $\dTor^N_k(B,-)$ is  a functor from $\QGdis$ to $_{\Q[G/N]}\mathbf{dis}$. To see it we just need to check this for tensor products. First consider an element of $B \otimes_{N} A$ of the form $b \otimes a$. If $a$ is stabilised by some open $U$, and $b$ by some open $V$ then, for $g$ in  $U \cap V$, $g(b \otimes a) = bg^{-1} \otimes ga = b \otimes a$ as required. Therefore, for a general element of $B \otimes_{N} A$, the stabiliser contains a finite intersection of such subgroups, so it is open.

\medskip

On the other hand, for $A \in \QGdis$, we may think of $\Hom_{\Q[N]}(A,-)$ as a functor $\QGdis \to _{\Q[G/N]}\mathbf{mod}$ with the $G/N$-action given by 
$$g \cdot f (a)= gf(g^{-1}a), \quad \forall a\in A,$$ 
which is trivial for $g \in N$. Suppose $N=1$ and $U$ is a compact open subgroup of $G$ whose normal core is not open, e.g., a compact open subgroup in a (topologically) simple t.d.l.c. group. Then consider $\Hom_\Q(\bigoplus_{G/U} \Q, \Q[G/U]) = \prod_{G/U} \Q[G/U]$, where $\bigoplus_{G/U} \Q$ is given the trivial action. The $G$-action on $\prod_{G/U} \Q[G/U]$ is the diagonal one, acting by left-multiplication on each copy of $G/U$. But this is not a discrete $G$-module because the stabiliser in $G$ of the element $(gU)_{gU \in G/U}$ is the normal core of $U$.  Neretin's group of almost automorphisms of a regular tree is an example of a such a group; see \cite{Kap}. 

Therefore, we can define an `internal $\Hom$' functor 
\begin{equation}
\dd \circ \Hom_{\Q[N]}(A,-): \QGdis \to _{\Q[G/N]}\mathbf{dis},
\end{equation}
 which satisfies the usual adjunction with tensor products and is left exact. We write $\dd\Hom$ for this. Indeed, it is left exact because $\dd$ and $\Hom$ are; for $A,C \in \QGdis$, and $B \in {}_{\Q[G/N]}\mathbf{dis}$, we have
\begin{align*}
\Hom_{\Q[G/N]}(B, \dd\Hom_{\Q[N]}(A,C)) &= \Hom_{\Q[G/N]}(B, \Hom_{\Q[N]}(A,C)) \\
&= \Hom_{\Q[G]}(B \otimes_\Q A, C)),
\end{align*}
where the action on $B \otimes_\Q A$ is the diagonal one.

\medskip

This difference must be borne in mind whenever we want to define Lyndon-Hochschild-Serre-type spectral sequences, and care should always be taken over the distinction here between $\Hom$ and $\dd\Hom$. For example, since $\Hom$ is balanced, its derived functors may be calculated by taking a projective resolution of the first variable, an injective resolution of the second variable, or both; the same is not true of $\dd \Hom$.  In the context of a normal closed subgroup $N$ of a t.d.l.c. group $G$, we will take group cohomology $H^\bullet(N,-)$ to be the derived functors of $-^N$: this avoids any ambiguity because applying $-^N$ to a discrete $\QG$-module gives a discrete $\Q[G/N]$-module automatically. Observe, for $A \in \QGdis$, that:
\begin{enumerate}[(i)]
\item $H^n(N,A)$ can be calculated as $\dd\Ext_{\Q[N]}^n(\Q,A)$, even though $\dd\Ext_{\Q[N]}^n(-,-)$ is not in general a functor $\QGdis^{op} \times \QGdis \to _{\Q[G/N]}\mathbf{mod}$;
\item $H^n(N,A)$ can be calculated as the homology of $\dd\Hom_{\Q[N]}(\Q,I)$ where $I$ is an injective resolution of $A$, but not as the homology of $\dd\Hom_{\Q[N]}(P,A)$ for $P$ a projective resolution of $\Q$.
\end{enumerate}

\subsection{Topological modules}\label{ss:topmod}

As a concrete motivation for introducing this more general category of modules: we will want to study when discrete $\RG$-modules have resolutions by finitely generated proper discrete permutation modules, but when they are not projective (e.g. $\RR=\Z$), studying these resolutions has some extra obstacles. It turns out that the key to making things work is Corollary \ref{FPSchanuel}; the proof uses Theorem \ref{fpnkpn}, which in turn requires that we use topological $\RG$-modules.

\medskip 

In this paper, we consider algebraic objects in the category of $k$-spaces, that is, spaces which are compactly generated and weakly Hausdorff. For background on such spaces, see \cite{Lamartin}.

We can consider the categories of group objects, module objects, etc. in this category, and call such objects $k$-groups, $k$-modules, etc. See \cite[Section 1, Section 7]{Ged} for background on these categories. We summarise the details we will need.

First, all locally compact Hausdorff spaces are $k$-spaces, and so all t.d.l.c. groups are automatically $k$-groups.

The group ring $\RG$ can be given a topology making it a $k$-$R$-algebra satisfying the usual universal property: if $S$ is a $k$-$R$-algebra, every continuous group homomorphism from $G$ to the group of units of $S$ (with the subspace topology) factorises uniquely through $G \to \RG \to S$.

The category $\RGtop$ of $k$-$\RG$-modules is well-behaved: it is quasi-abelian, and has several interesting exact structures that make it into a left exact category (in the sense of \cite{Ged}; see there for details) with enough projectives. In fact this left exact structure is even exact, by \cite[Proposition 4.3]{BC}. Moreover there are well-behaved $\Hom_{\RG}(\argu,\argu)$ and $\argu\otimes_{\RG}\argu$ functors satisfying the usual form of adjunction.

The relevant left exact structure here is the one induced by taking the class of projectives to be summands of free modules on disjoint unions of compact Hausdorff spaces, which we call the {\it compact Hausdorff structure}. This helpfully makes the group algebra $\RG$ a projective $R$-module when $G$ is t.d.l.c., by van Dantzig's theorem. Indeed, we can say more:

\begin{lem}
\label{GprojH}
For $G$ a t.d.l.c. group and $H$ a (closed) subgroup, $\RG$ is projective as a $k$-$\RH$-module, in the compact Hausdorff structure.
\end{lem}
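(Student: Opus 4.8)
The plan is to show that $\RG$ decomposes, as a $k$-$\RH$-module, into a coproduct of copies of $\RH$ indexed by the coset space $H\backslash G$ (with $H$ acting on the right, so on cosets $Hg$), which immediately exhibits $\RG$ as free and hence projective in the compact Hausdorff structure. First I would recall that by van Dantzig's theorem $G$ has a compact open subgroup $U$, and the quotient $U\backslash G$ (equivalently the space of cosets) is discrete; more to the point, $G$ is a disjoint union of cosets of $U$, and topologically $G \cong U \times (U\backslash G)$ as a $k$-space where $U\backslash G$ carries the discrete topology. The key structural observation is that the right action of $H$ on $G$ is free and the orbit space is the discrete-fibered coset space $H\backslash G$; what I want is a continuous section of the projection $G \to H\backslash G$, or rather a homeomorphism $G \cong H \times (H\backslash G)$ of right $H$-spaces.

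The main work is therefore to produce such a trivialisation. Since $H$ is closed in the t.d.l.c. group $G$, $H$ is itself t.d.l.c., and the quotient map $G \to H\backslash G$ is an open continuous map of locally compact Hausdorff spaces. Using the compact open subgroup $U$, I would argue that $H\backslash G$ is a (totally disconnected, locally compact, Hausdorff) $k$-space and that the bundle $G \to H\backslash G$ admits local sections, which can be patched because the base is totally disconnected: around each point the base has a compact open neighbourhood over which the projection trivialises, and disjointness of these neighbourhoods (refining to a partition into compact open pieces) lets me assemble a global continuous section $s\colon H\backslash G \to G$. This yields a homeomorphism $H \times (H\backslash G) \to G$, $(h, x)\mapsto h\cdot s(x)$, equivariant for the right $H$-action on the first factor.

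Granting the trivialisation, the free $k$-$\RH$-module structure follows: the universal property of the group algebra and the behaviour of $\argu\otimes_{\RR}\argu$ and the free-module functor on $k$-spaces give $\RG \cong \RH[\, H\backslash G\,]$, the free $k$-$\RH$-module on the space $H\backslash G$. Because $H\backslash G$ is a disjoint union of compact Hausdorff (indeed compact open) pieces, this is exactly a free module on a disjoint union of compact Hausdorff spaces, which is projective in the compact Hausdorff structure by definition. I would then conclude by invoking that such free modules are projective in this left exact structure, as recorded in the discussion preceding the lemma (summands of free modules on disjoint unions of compact Hausdorff spaces being the chosen class of projectives).

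The hard part will be the equivariant trivialisation $G \cong H \times (H\backslash G)$: producing a \emph{continuous} global section of $G \to H\backslash G$ in the $k$-space setting. The crucial input is that $G$ is totally disconnected and has a neighbourhood basis of compact open subgroups, so the base $H\backslash G$ inherits a partition into compact open sets over which the map splits; the subtlety is ensuring these local sections are continuous and can be chosen compatibly (using the compact open subgroup $U$ to control the local structure) and that the resulting identification respects the right $H$-action strictly rather than just up to homotopy. Once the section is in hand, the remaining identifications are formal consequences of the universal properties of $\RG$ and of free $k$-modules recalled in \S\ref{ss:topmod}.
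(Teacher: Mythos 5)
Your proposal is correct and follows essentially the same route as the paper: obtain a continuous section of $G \to H\backslash G$, deduce $G \cong H \times (H\backslash G)$ as $H$-spaces, and conclude that $\RG$ is the free $k$-$\RH$-module on $H\backslash G$, a disjoint union of compact open (hence compact Hausdorff) pieces. The only difference is that the paper simply cites \cite[Lemma 2.3]{AHV} for the existence of the section, whereas you sketch its construction directly (local sections over compact open subsets of $H\backslash G$, patched via a partition of the totally disconnected base); that sketch is sound, resting on the standard fact that quotients of profinite groups by closed subgroups admit continuous sections.
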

\begin{proof}
By \cite[Lemma 2.3]{AHV}, the quotient map $G \to H \backslash G$ has a continuous section. So as an $H$-space, $G$ is homeomorphic to $H \times H \backslash G$, where $H$ acts by left-multiplication on the first factor. Thus $\RG$ is the free $\RH$-module on the space $H \backslash G$ with the quotient topology, which is a disjoint union of compact Hausdorff spaces since it is homeomorphic to a subspace of $G$.
\end{proof}

We will write $\xrightarrow{CH}$ for the projective-split maps (that is, maps which have the right lifting property for projective objects) in this category, and call such maps {\em CH-split.}

Finally, note that $\RGdis$ is an abelian subcategory of $\RGtop$, and that the restriction of the compact Hausdorff structure of $\RGtop$ to $\RGdis$ gives the usual abelian structure.

\section{$G$-spaces with discrete actions}
\subsection{Discrete $G$-CW-complexes}\label{ss:CW}
Following \cite{it:ratdiscoh}, let $\eu F$ be a non-empty set of open subgroups of $G$ satisfying
\begin{itemize}
\item[(F1)] for $A\in\eu F$ and $g\in G$ one has ${}^gA=gAg^{-1}\in\eu F$;
\item[(F2)] for $A,B\in\eu F$ one has $A\cap B\in\eu F$.
\end{itemize}
A non-trivial topological space $X$ together with a continuous left $G$-action $\argu\cdot\argu\colon G\times X\to X$ is called a left {\it $G$-space}. Moreover a $G$-space is said to be {\it $\eu F$-discrete}, if $\stab_G(x)\in\eu F$ for all $x \in X$. We shall call a $G$-space $X$ {\it discrete}, whenever the family $\eu F$ is clear. If the family $\eu F$ is contained in $\CO(G)$, we say that the $G$-space is {\it proper}. A map of $G$-spaces is a continuous map $f\colon X\to Y$ of left $G$-spaces which commutes with the $G$-action.
\begin{defn}
A non-empty $G$-space $X$ together with an increasing filtration $(X_n)_{n\geq0}$ of closed subspaces $X_n \subseteq X$ is called a {\it $(G,\eu F)$-CW-complex}, if
\begin{itemize}
\item[(D1)] $X = \bigcup_{n\geq 0}X_n$;
\item[(D2)] $X_0$ is a $\eu F$-discrete subspace of $X$;
\item[(D3)] for $n \geq 1$ there exist a $\eu F$-discrete space $\Lambda_n$, $G$-maps $f\colon S^{n-1} \times \Lambda_n \to X_{n-1}$ and
$\hat f\colon B^n \times \Lambda_n\to X_n$ such that
$$\xymatrix{
S^{n-1} \times \Lambda_n\ar[r]^f\ar[d]&X_{n-1}\ar[d]\\
B^n \times \Lambda_n\ar[r]^{\hat f}&X_n}$$
is a push-out diagram, where $S^{n-1}$ denotes the unit sphere and $B^n$ the unit ball in
euclidean $n$-space (with trivial $G$-action);
\item[(D4)] a subspace $Y \subset X$ is closed if and only if $Y \cap X_n$ is closed for all $n \geq 0$.
\end{itemize}
\end{defn}
\begin{rem}
All the spaces $\Lambda_n$ comes equipped with the discrete topology since $\eu F$ is a family of open subgroups of $G$.
Therefore a discrete $G$-CW-complex is, forgetting the group action, a CW-complex.
\end{rem}
\begin{fact} Let $G$ be a t.d.l.c. group and $X$ a discrete $G$-CW-complex. Then
\begin{enumerate}
\item the action of $G$ on $X$ is continuous;
\item the action of $G$ on $X$ is by cell-permuting homeomorphisms;
\item an element in $G$ fixing a cell $\sigma$ of $X$ setwise fixes $\sigma$ pointwise.
\end{enumerate}
\end{fact}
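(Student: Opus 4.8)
The plan is to exploit the inductive construction of $X$ from its skeleta $(X_n)_{n \geq 0}$ and to reduce everything to the behaviour of the action on the attaching data $B^n \times \Lambda_n$, on which $G$ acts trivially in the ball factor.

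For (1) I would argue by induction on the skeleta. The base case is that $G \times X_0 \to X_0$ is continuous: as $X_0$ is $\eu{F}$-discrete every point stabiliser is open, which is exactly the condition for the action on the discrete space $X_0$ to be continuous. For the inductive step, on $B^n \times \Lambda_n$ the action is the product of the trivial action on $B^n$ with the action on $\Lambda_n$; since $\Lambda_n$ is $\eu{F}$-discrete the latter action is continuous, hence so is the action on $B^n \times \Lambda_n$, and the maps $f$ and $\hat f$ of (D3) are continuous $G$-maps. Granting continuity on $X_{n-1}$, the square (D3) is a pushout of $k$-spaces with continuous $G$-actions and equivariant maps, so $X_n$ inherits a continuous action. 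Finally, by (D1) and (D4), $X = \colim_n X_n$ carries the weak topology determined by the skeleta, and the compatible action maps on the $X_n$ glue to a map $G \times X \to X$.

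I expect the main obstacle to be the interchange of products and colimits hidden in the previous paragraph: both the pushout step and the final glueing require that $G \times \argu$ preserve the colimits involved, that is, that $G \times X_n$ be the pushout of $G \times (B^n \times \Lambda_n) \leftarrow G \times (S^{n-1}\times\Lambda_n) \to G \times X_{n-1}$, and that $G \times X = \colim_n (G \times X_n)$. This is where I would invoke local compactness of $G$ (equivalently, that $G$ is a $k$-space and that $G \times \argu$, for $G$ locally compact Hausdorff, preserves quotient maps and colimits of closed inclusions in the category of $k$-spaces). With this standard fact the continuity claims follow at once, and each $g \in G$ then acts as a homeomorphism, its inverse being the action of $g^{-1}$.

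Parts (2) and (3) are then formal consequences. Each open $n$-cell of $X$ is the image $e_\lambda = \hat f(\mathrm{int}(B^n) \times \{\lambda\})$ of a single index $\lambda \in \Lambda_n$. Since $\hat f$ is a $G$-map and $G$ acts trivially on $B^n$, for $g \in G$ and $b \in B^n$ we have $g \cdot \hat f(b,\lambda) = \hat f(b, g\lambda)$; thus $g$ carries $e_\lambda$ homeomorphically onto $e_{g\lambda}$, proving that $G$ permutes the cells, which is (2). For (3), if $g$ fixes the cell $\sigma = e_\lambda$ setwise then $e_{g\lambda} = g \cdot e_\lambda = e_\lambda$ forces $g\lambda = \lambda$, and the same identity $g \cdot \hat f(b,\lambda) = \hat f(b,g\lambda) = \hat f(b,\lambda)$ shows that $g$ fixes every point of $\sigma$.
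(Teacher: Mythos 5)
Your argument is correct. The paper states this result as a Fact and gives no proof of its own (it is treated as a routine transcription of the corresponding facts for $G$-CW-complexes of discrete groups), so there is no argument to compare against; your write-up supplies exactly the expected details: continuity of an action on a discrete space is equivalent to openness of point stabilisers, and the inductive passage up the skeleta is legitimate because $G$ is locally compact Hausdorff, so $G \times \argu$ preserves the quotient maps and the colimits of closed inclusions used to assemble $X$ (equivalently, one may work in $k$-spaces throughout). Parts (2) and (3) then follow formally from equivariance of $\hat f$ and the triviality of the $G$-action on $B^n$, together with the fact that distinct indices in $\Lambda_n$ yield disjoint nonempty open cells, exactly as you say.
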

We shall refer to $X$ simply as {\it discrete $G$-CW-complex} when there is no need to specify the family $\eu F$. Moreover, $X$ will be called {\it proper} if $\eu F\subseteq\CO(G)$, i.e., the cell-stabilisers are compact and open.
\begin{rem}
In \cite{sauert}, a $(G,\CO(G))$-CW-complex $X$ is called {\it proper smooth $G$-$\textup{CW}$-complex}. Moreover, if $X$ is also contractible, then $X$ is called a {\it topological model} of $G$.
\end{rem}
\begin{rem} Alternatively, one could allow actions with inversions and talk about {\it discrete $G$-complexes}: this is the approach taken in \cite{brown:presentations}, for example. In such a case, the associated cellular chain complex is no longer formed by  discrete permutation modules because some kind of ``twisting" is involved. Nevertheless, many of the upcoming results can be stated for discrete $G$-complexes as well.
\end{rem}
 The {\it dimension} of $X$ is defined by
\begin{equation}
\dim(X)=\min(\{k\in \N_0 \mid X_k =X\}\cup\{\infty\}),
\end{equation}
and $X$ is said to be {\it of type} $F_n$, $n \in \N_0 \cup \{\infty\}$, if $G$ has finitely many orbits on the $k$-skeleton $X_k$ of $X$ for $0 \leq k \leq n$.
\begin{ex} The topological realisation of a simplicial complex acted on by $G$ with open stabilisers is an example of discrete $G$-CW-complex. For example, let $G$ be a compactly generated t.d.l.c. group. Recall that every locally finite connected graph equipped with a transitive $G$-action with compact open stabilisers is called a {\em Cayley-Abels graph} of $G$. For every compact open subgroup $U\leq G$ there exists a Cayley-Abels graph whose vertices are taken to be the cosets $G/U$. The topological realisation of a Cayley-Abels graph of $G$ is a proper discrete $G$-CW-complex.
\end{ex}
\subsection{Cellular homology with coefficients in $\RR$}
Since the ordinary theory of $CW$-complexes is easily extended to the equivariant setting with discrete actions, one can associate to any discrete $G$-CW-complex $X$ the cellular chain complex $C_\bullet(X,\RR)=C_\bullet(X)\otimes_\Z\RR$ with coefficients in $\RR$.
\begin{fact}\label{fact:cchain}
Let $G$ be a t.d.l.c. group and $X$ a contractible (proper) discrete $G$-CW-complex. Then the augmented cellular chain complex
$$\xymatrix{\cdots\ar[r]&C_n(X,\RR)\ar[r]^\delta&C_{n-1}(X,\RR)\ar[r]^-\delta&\cdots\ar[r]&C_0(X,\RR)\ar[r]^-\epsilon&\RR\ar[r]&0}$$
is a (proper) discrete permutation resolution of $\RR$ in $\RGdis$, where $\epsilon$ denotes the augmentation map $\sigma\mapsto 1$ (for all $0$-cells $\sigma$ in $X$). For $\RR=\Q$ and $X$ proper, one obtains a projective resolution of $\Q$ in $\QGdis$.
\end{fact}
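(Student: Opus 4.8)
The plan is to handle the three assertions separately: that each term $C_n(X,\RR)$ is a (proper) discrete permutation $\RG$-module, that the augmented sequence is a complex in $\RGdis$, and that it is exact. The last of these is where contractibility enters, while the first is where the structure of a \emph{discrete} $G$-CW-complex (as opposed to one with inversions) does the real work.

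First I would identify the modules. Let $\Omega_n$ be the set of $n$-cells of $X$, a $G$-set isomorphic to $\Lambda_n$. By the preceding Fact on discrete $G$-CW-complexes, $G$ permutes the cells by homeomorphisms and any element fixing a cell setwise fixes it pointwise. Since $C_n(X)=\bigoplus_{\Omega_n}\Z$ is free abelian on the $n$-cells, we get $C_n(X,\RR)=C_n(X)\otimes_\Z\RR\cong\RR[\Omega_n]$ with $G$ acting by permuting the basis $\Omega_n$. The pointwise-fixing property is exactly what guarantees that this is the honest permutation action, with no sign-twisting in the $G$-action on chains (compare the Remark on inversions), so $\RR[\Omega_n]$ really is a permutation module. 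As each cell stabiliser lies in $\eu F$ it is open, so $\RR[\Omega_n]$ is discrete; and if $X$ is proper, i.e. $\eu F\subseteq\CO(G)$, the stabilisers are compact open and $C_n(X,\RR)$ is a \emph{proper} discrete permutation module.

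Next I would check that the augmented complex lives in $\RGdis$. The cellular boundary maps $\delta\colon C_n(X,\RR)\to C_{n-1}(X,\RR)$ are $G$-equivariant by naturality of cellular chains, since $G$ acts by cell-permuting homeomorphisms; and the augmentation $\epsilon\colon C_0(X,\RR)\to\RR$, $\sigma\mapsto 1$, is $G$-equivariant because $G$ permutes the $0$-cells and acts trivially on $\RR$. As every term is discrete, the augmented sequence is a chain complex in $\RGdis$. For exactness I would forget the $G$-action and argue topologically: since $X$ is contractible it is $\RR$-acyclic, so $H_0(X;\RR)\cong\RR$ and $H_n(X;\RR)=0$ for $n\geq 1$ (concretely, a contraction of $X$ induces a chain contraction of the augmented integral complex, which survives $\otimes_\Z\RR$, so no coefficient $\RR$ is special here). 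As cellular homology computes singular homology, $C_\bullet(X,\RR)\to\RR\to 0$ is exact in $\Rmod$, and the composite forgetful functor $\RGdis\hookrightarrow\RGmod\to\Rmod$ is exact and faithful, hence reflects exactness; so the complex is exact in $\RGdis$. Finally, for $\RR=\Q$ with $X$ proper, each $C_n(X,\Q)$ is a proper discrete permutation $\QG$-module and therefore projective by \cite[Corollary~3.3]{it:ratdiscoh}, giving a projective resolution in $\QGdis$.

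The main obstacle is not exactness, which is a routine consequence of contractibility once one passes to $\Rmod$, but the verification that $C_n(X,\RR)$ is an \emph{honest} permutation module: this rests entirely on the no-inversions property, which rules out the twisting that would otherwise corrupt the $G$-action on cellular chains, together with confirming that passage to an arbitrary coefficient ring $\RR$ disturbs neither the permutation structure nor the exactness.
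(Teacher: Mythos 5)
Your proof is correct; the paper states this as a Fact without proof, and your argument is precisely the standard one it implicitly relies on: the no-inversions property (setwise fixing implies pointwise fixing) makes each $C_n(X,\RR)$ an honest (proper) discrete permutation module with open (resp.\ compact open) stabilisers, contractibility gives exactness of the augmented complex over $\Z$ and hence over $\RR$, and projectivity for $\RR=\Q$ follows from \cite[Corollary~3.3]{it:ratdiscoh}. The only loose phrase is that a topological contraction ``induces'' a chain contraction of the cellular complex --- the clean justification is that a bounded-below exact complex of free abelian groups is split exact, so exactness survives $\otimes_\Z\RR$ --- but this does not affect correctness.
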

By construction, the homology of $(C_\bullet(X,\RR),\delta_\bullet)$ is the cellular homology of $X$ with coefficients in $\RR$.
Moreover, one can also consider the {\it reduced cellular chain complex of $X$} with coefficients in $\RR$ defined by
\begin{equation}
\tilde{C}_p(X,\RR)=
\begin{cases}
C_p(X,\RR)& p>0,\\
\ker(\epsilon)& p=0.
\end{cases}
\ \text{and}\quad \tilde\delta_p=\delta_p,\ \forall p\geq0.
\end{equation}
and its homology is thus the reduced homology of $X$ with $\RR$-coefficients and satisfies the following well-known property.
\begin{prop}[\protect{\cite[Ch. 4, Lemma 1]{spanier}}]\label{prop:redhom} Let $G$ be a t.d.l.c. group and $X$ a discrete $G$-CW-complex; then
\begin{equation}
\mathrm H_p(X,\RR)=
\begin{cases}
\tilde{\mathrm H}_p(X,\RR)& p>0,\\
\tilde{\mathrm H}_0(X,\RR)\oplus \RR& p=0.
\end{cases}
\end{equation}
\end{prop}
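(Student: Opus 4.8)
The plan is to exploit the fact that the reduced cellular chain complex $\tilde C_\bullet(X,\RR)$ coincides with $C_\bullet(X,\RR)$ in every positive degree and differs only in degree zero, where $\tilde C_0(X,\RR) = \ker(\epsilon)$ replaces $C_0(X,\RR)$. Since the $G$-action plays no role in the statement — by the earlier remark, $X$ is an ordinary CW-complex once we forget the action — I would prove the isomorphisms as $\RR$-modules, following \cite{spanier}.

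First, for $p>0$. The boundary $\delta_1\colon C_1(X,\RR)\to C_0(X,\RR)$ satisfies $\epsilon\circ\delta_1=0$, so $\image(\delta_1)\subseteq\ker(\epsilon)$ and the corestricted map $\tilde\delta_1\colon C_1(X,\RR)\to\ker(\epsilon)$ is well-defined with $\ker(\tilde\delta_1)=\ker(\delta_1)$. For $p\geq2$ the two complexes are literally identical, while in degree $1$ only the codomain of the outgoing differential changes, which affects neither $\ker(\delta_1)$ nor $\image(\delta_2)$. Hence $\mathrm H_p(X,\RR)=\tilde{\mathrm H}_p(X,\RR)$ for all $p>0$.

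The degree-zero case is where the extra summand appears. Since $X$ is non-empty it has a $0$-cell $\sigma$ with $\epsilon(\sigma)=1$, so $\epsilon\colon C_0(X,\RR)\to\RR$ is surjective and we obtain a short exact sequence of $\RR$-modules $0\to\ker(\epsilon)\to C_0(X,\RR)\xrightarrow{\epsilon}\RR\to0$. As $\RR$ is free over itself this sequence splits, giving $C_0(X,\RR)\cong\ker(\epsilon)\oplus\RR$. Because $\image(\delta_1)\subseteq\ker(\epsilon)$, the submodule $\image(\delta_1)$ lies entirely in the first summand, so passing to the quotient respects the decomposition and yields
\[
\mathrm H_0(X,\RR)=C_0(X,\RR)/\image(\delta_1)\cong\big(\ker(\epsilon)/\image(\delta_1)\big)\oplus\RR=\tilde{\mathrm H}_0(X,\RR)\oplus\RR.
\]

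The argument is entirely routine; the only point to verify is that the augmentation is split surjective, which holds because $\RR$ is projective over itself (indeed free of rank one). No genuine obstacle arises, and there is nothing special to the t.d.l.c. setting here beyond recording that the identification is one of $\RR$-modules, the group action being irrelevant to these ordinary homology computations.
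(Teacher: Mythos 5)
Your proof is correct and is essentially the standard argument that the paper is implicitly invoking: the paper gives no proof of its own here, only the citation to Spanier, and your comparison of the reduced and unreduced complexes degree by degree, together with the split short exact sequence $0\to\ker(\epsilon)\to C_0(X,\RR)\to\RR\to 0$, is exactly that argument. Your closing caveat is also the right one to record in this equivariant setting: the splitting (and hence the direct-sum decomposition of $\mathrm{H}_0$) is one of $\RR$-modules only, since a $G$-equivariant splitting would require a $G$-invariant element of $C_0(X,\RR)$ with augmentation $1$, which need not exist.
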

\subsection{Rational discrete equivariant homology}
Let $X$ be a discrete $G$-CW-complex $X$. For every discrete left $RG$-module $M$, set
\begin{equation}\label{eq:ccc}
C_\bullet(X,M)=C_\bullet(X,\RR)\otimes_\RR M\quad\text{and}\quad \delta_\bullet=\delta_\bullet\otimes_\RR\mathrm{id}_M,
\end{equation}
and equip $C_\bullet(X,M)$ with the diagonal $G$-action. Thus \eqref{eq:ccc} is a chain complex in $\RGdis$ which is called {\it cellular chain complex of $X$ with coefficients in $M$}. For $R=\Q$, we will refer to
\begin{equation}
\dH^G_k(X;M):=\mathrm{H}_k(P\otimes_{G} C(X,M)),\quad k\geq0,
\end{equation}
as {\it rational discrete equivariant homology of $(G,X)$ with coefficients in $M$}, where $P\to\Q$ is a projective resolution of $\Q$ of discrete right $\QG$-modules.

\medskip

By definition, $\dH^G_k(X;M)$ is the homology of the total complex associated to the double complex $P_q\otimes_G C_p(X,M)$ (or, alternatively, to $P_p\otimes_G C_q(X,M)$). Therefore the standard theory yields two spectral sequences for computing $\dH^G_k(X,M)$, where the $E^2$-term in each case is the horizontal homology of the vertical homology (cf. \cite[Ch. VII]{brown}). Namely, one has
\begin{equation}\label{eq:ss1}
E^1_{p,q}=\dH_q(G;C_p(X,M))\Rightarrow \dH^G_{p+q}(X;M),\quad p,q\geq0,
\end{equation}
and
\begin{equation}\label{eq:ss2}
E^2_{p,q}=\dH_p(G;H_q(X;M))\Rightarrow \dH^G_{p+q}(X;M),\quad p,q\geq0.
\end{equation}
To analyse  $P_q\otimes_G C_p(X,M)$ further, let $M^\times$ denotes the module $M\in\ob(\QGdis)$ regarded as discrete right $\QG$-module. There is a natural isomorphism\footnote{One can define the isomorphism directly or deduce it from $\argu\otimes_G\argu\cong(\argu\otimes_\Q\argu)_G.$} 
\begin{equation}
(A\otimes_R B^\times)\otimes_G C\cong (A\otimes_R C^\times)\otimes_G B,\quad\forall A\in \textup{ob}(\disQG),BC\in\textup{ob}(\QGdis),
\end{equation}
where $G$ acts on each tensor product $\argu\otimes_R\argu$ by the diagonal action. Therefore in each dimension $p+q$ one has
\begin{equation}
P_q\otimes_G C_p(X,M)\cong(P_q\otimes_\Q C_p(X,\Q)^\times)\otimes_G M.
\end{equation}
So if we take the vertical homology (fixing $p$ and taking the homology with respect to $q$), we get
\begin{equation}\label{eq:ss3}
E^1_{p,q}=\dTor^G_q(C_p(X,\Q)^\times;M)\Rightarrow \dH^G_{p+q}(X;M),\quad p,q\geq0,
\end{equation}
which gives \eqref{eq:ss1} back in different terms.
\section{Type $\F_n$, Type $\FP_n$ and Type $\KP_n$}\label{s:comparison}
\subsection{T.d.l.c. groups of type $\F_n$}\label{ss:typeF}
A t.d.l.c. group G is said to be {\it of type $\F_n$} ($n\in\N\cup\{\infty\}$) if there exists a contractible proper discrete $G$-CW-complex $X$ of type $\F_n$, i.e., the $n$-skeleton of $X$ has finitely many $G$-orbits. Moreover, the t.d.l.c. group $G$ is {\it of type $\F$} if  there is a contractible proper discrete $G$-CW-complex $X$ of type $F_\infty$ with $\dim(X)<\infty$.

\begin{ex}
\label{ex:fp}
\begin{enumerate}[(i)]
\item Neretin's group of almost automorphisms of a regular tree has type $\F_\infty$, by \cite{sauert}.
\item Hyperbolic t.d.l.c. groups have type $\F$. Indeed one can construct a contractible Rips' complex of finite dimension (cf. Fact~\ref{fact:rips}).
\item Simply-connected semi-simple algebraic groups defined over a non-discrete non-archimedean local field have type $\F$.
\item Certain Kac-Moody groups have type $\F$; see \cite{it:ratdiscoh} for details.
\end{enumerate}
\end{ex}

\subsection{Compactly presented t.d.l.c. groups} We start by recalling the notion of generalised presentation of a t.d.l.c. group $G$. For a detailed definition of a graph of profinite groups and its fundamental group the reader is referred to \cite[\S 5.5]{it:ratdiscoh}.

\medskip

A {\it generalised presentation} of $G$ is a graph of profinite groups $(\ca A,\Lambda)$ together with a continuous open epimorphism
\begin{equation}
\phi\colon \pi_1(\ca A,\Lambda)\to G
\end{equation}
such that $\phi\restriction_{\ca A_v}$ is injective for all $v\in\ca V(\Lambda)$.
\begin{defn}
A t.d.l.c. group $G$ is defined to be {\it compactly presented} if there exists a generalised presentation $((\ca A,\Lambda),\phi)$ of $G$ such that
\begin{itemize}
\item[(G1)] $\Lambda$ is a finite connected graph, and
\item[(G2)] $N$ is finitely generated as normal subgroup of $\pi_1(\ca A,\Lambda)$.
\end{itemize}
\end{defn}
A generalised presentation satisfying both $(G1)$ and $(G2)$ will be called {\it finite}. Notice that $G$ is compactly generated if and only if there exists a generalised presentation based on a finite connected graph $\Lambda$. Clearly, being compactly presented implies being compactly generated.
\begin{rem}
In the literature there is a widely used equivalent definition of compact presentability based on the notion of {\it compact presentation}: $G$ is compactly presented if, as an abstract group, it admits a presentation $\langle S\mid R\rangle$ where $S$ is a compact set of generators and $R$ is a set of relators of bounded length. The notion of generalised presentation may have already been implicit in \cite{abels}. Anyway it has independently been made explicit in \cite{it:ratdiscoh} and \cite[Corollary 8.A.17]{cdlh} (up to Bass-Serre theory).
\end{rem}
\begin{prop}
\label{prop:typeF2}
Let $G$ be a t.d.l.c. group. Then
\begin{enumerate}[(i)]
\item $G$ is compactly generated if and only if $G$ is of type $\F_1$;
\item $G$ is compactly presented if and only if $G$ is of type $\F_2$.
\end{enumerate}
\end{prop}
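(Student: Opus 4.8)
The plan is to use the dictionary, furnished by Bass--Serre theory for graphs of profinite groups (as developed in \cite{it:ratdiscoh}), between generalised presentations of $G$ on a finite connected graph and cocompact actions of $G$ on connected graphs with compact open stabilisers, together with the observation that the $1$-skeleton of a proper discrete $G$-CW-complex of type $\F_1$ is exactly such a graph. Throughout I would pass freely between a graph of profinite groups $(\ca A,\Lambda)$ with surjection $\phi\colon\pi_1(\ca A,\Lambda)\to G$ and the action of $\pi_1(\ca A,\Lambda)$ on its Bass--Serre tree $T$. Since $\phi$ is injective on each vertex group $\ca A_v$ and $N=\ker\phi$ is normal, $N$ meets every cell stabiliser trivially, so $N$ acts freely on $T$; hence $\Gamma:=T/N$ is a connected graph on which $G=\pi_1(\ca A,\Lambda)/N$ acts with quotient $\Lambda$ and compact open stabilisers, and $\pi_1(\Gamma)\cong N$. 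This identification of $\pi_1$ of the $1$-skeleton with the kernel $N$ is the hinge of the whole argument.

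For (i), if $G$ is of type $\F_1$, witnessed by a contractible proper discrete $G$-CW-complex $X$ with finitely many $G$-orbits of cells in dimensions $0$ and $1$, then $X$ is connected, hence so is $X_1$, which therefore carries a cocompact $G$-action on a connected graph with compact open stabilisers; compact generation follows from the standard fact that such an action forces $G$ to be compactly generated. Conversely, if $G$ is compactly generated I would take a Cayley--Abels graph $\Gamma$ and set $Y=\lvert\Gamma\rvert$, a connected $1$-dimensional proper discrete $G$-CW-complex with finitely many orbits of cells, and then make $Y$ contractible by attaching cells in dimensions $\geq 2$ to kill $\pi_n$ successively. Since this never alters the $1$-skeleton, the number of orbits of $1$-cells stays finite, while the higher cells are permitted to fall into infinitely many orbits, which is exactly what type $\F_1$ allows.

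For (ii), if $G$ is compactly presented I would fix a finite generalised presentation $((\ca A,\Lambda),\phi)$ with $N$ finitely generated as a normal subgroup of $\pi_1(\ca A,\Lambda)$, start from $Y=\Gamma=T/N$ with $\pi_1(Y)\cong N$, and attach one $G$-orbit of $2$-cells for each of the finitely many normal generators $n_1,\dots,n_k$. Each $n_i$ is represented by a compact loop in $Y$ whose pointwise stabiliser $U_i$ is compact open, so the loop lies in the fixed-point subcomplex $Y^{U_i}$ and serves as the attaching map of a $G/U_i$-orbit of $2$-cells; because the $G$-action on $\pi_1(Y)\cong N$ is by conjugation in $\pi_1(\ca A,\Lambda)$ (the deck group of $T\to Y$ being $N$), these orbits kill precisely the $\pi_1(\ca A,\Lambda)$-normal closure of $\{n_i\}$, that is all of $N$. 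The resulting $Y'$ is simply connected with finite $2$-skeleton mod $G$, and attaching further orbits in dimensions $\geq 3$ to kill $\pi_2,\pi_3,\dots$ produces a contractible $X$ of type $\F_2$. For the converse, given a contractible $X$ of type $\F_2$, part (i) supplies a finite generalised presentation with $\pi_1(X_1)\cong N$; since $\pi_1(X_2)=\pi_1(X)=1$, the finitely many $G$-orbits of $2$-cells of $X$ have attaching loops normally generating $\pi_1(X_1)=N$ over $G$, hence over $\pi_1(\ca A,\Lambda)$, so $N$ is finitely generated as a normal subgroup and $G$ is compactly presented.

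The hard part, and the two places deserving care, will be the equivariant cell attachments and the homotopy bookkeeping that matches the algebra with the topology. Unlike the discrete case one cannot attach free orbits of cells: every cell of a proper discrete $G$-CW-complex must have compact open stabiliser, so to attach a $G/U$-orbit of $(n+1)$-cells one needs the attaching sphere to lie in the fixed-point set $X^U$ for some compact open $U$. The observation that makes this work is that any map of a sphere or loop into $X$ has compact image meeting only finitely many cells, whose pointwise stabiliser is a finite intersection of compact open subgroups and hence compact open; this lets every homotopy class be represented by a sphere in some $X^U$ and the corresponding orbit attached. The second delicate point is translating ``$N$ finitely generated as a normal subgroup of $\pi_1(\ca A,\Lambda)$'' into ``finitely many $G$-orbits of $2$-cells'', which rests entirely on the identification $\pi_1(X_1)\cong N$ and on the $G$-action on it being realised by $\pi_1(\ca A,\Lambda)$-conjugation; once these are established, the two halves of (ii) are two readings of a single computation.
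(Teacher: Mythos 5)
Your proposal is correct and follows essentially the same route as the paper: identify the quotient $\Gamma = T/N$ of the Bass--Serre tree as a Cayley--Abels graph with $\pi_1(\Gamma)\cong N$ and the $G$-action inducing conjugation, attach a $G$-orbit of $2$-cells along each normal generator of $N$, and then kill higher homotopy with further orbits of cells. The only differences are cosmetic: the paper cites Sauer--Thumann for the ``type $\F_1$/$\F_2$ implies compactly generated/presented'' direction where you argue it directly, and you spell out the equivariant cell-attachment bookkeeping (compact spheres having compact open pointwise stabilisers) that the paper leaves implicit.
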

\begin{proof}
Being of type $\F_1$ (respectively, $F_2$) implies compact generation (respectively, presentation) by \cite[Prop. 2.5]{sauert}.

Suppose now that $G$ is compactly generated. Let $((\ca A,\Lambda),\phi)$ be a generalised presentation of $G$ based on a finite connected graph $\Lambda$. Let $\ca T$ denote the Bass-Serre tree of $\Pi=\pi_1(\ca A,\Lambda)$ and $N=\ker(\phi)$. Thus the quotient graph $\Gamma=\ca T\quot N$ is a Cayley-Abels graph of $G$, which is cocompact. Since $\ca T$ is the universal cover of $\Gamma$, $\pi_1(\Gamma) = N$ (after fixing some basepoint); the action of $G$ on $\Gamma$ induces conjugation on $N$.
Let $\{\omega_i\}_{i\in I}$ be a set of normal generators of $N$ in $G$, and identify these with the corresponding loops in $\Gamma$. When $G$ is compactly presented, choose this set to be finite. Now attach a $G$-orbit of $2$-cells to the $G$-orbit of each of these loops: this space is simply connected because the $\omega_i$ normally generated $N$. Then we can add higher cells to kill higher homotopy. By Whitehead, the resulting $G$-CW-complex is contractible.
\end{proof}
In a t.d.l.c. group $G$, a {\it group retract} is a closed subgroup $H$ of $G$ such that there exists a continuous epimorphism $\rho\colon G\to H$ whose restriction to $H$ is the identity map.
\begin{cor}\label{cor:retract}
Let $G$ be a compactly presented t.d.l.c. group and $H$ a group retract of $G$. Then $H$ is compactly presented.
\end{cor}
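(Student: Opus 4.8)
The plan is to adapt the classical argument that a retract of a finitely presented group is finitely presented, replacing ``finite'' by ``compact'' throughout and using Proposition~\ref{prop:typeF2} to pass freely between compact presentability and type $\F_2$. First I would record the structural consequence of the retraction. Since $G$ is compactly presented it is compactly generated, hence $\sigma$-compact, so by the open mapping theorem the continuous epimorphism $\rho\colon G\to H$ is open and $H\cong G/K$ as topological groups, where $K=\ker\rho$ is a closed normal subgroup. Because $\rho\restriction_H=\iid_H$, the map $K\times H\to G$, $(k,h)\mapsto kh$, is a homeomorphism, so $G=K\rtimes H$ as topological groups. Thus the problem splits into two steps: (A) show that $K$ is the closed normal closure of a compact subset of $G$; and (B) show that the quotient of a compactly presented group by such a normal subgroup is again compactly presented.

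For step (A) I would first choose a compact open subgroup adapted to the splitting. Pick compact open subgroups $V\leq H$ and $W\leq K$ with $V$ normalising $W$ (such a $W$ exists because $V$ is compact and acts continuously on the totally disconnected group $K$, so one may intersect the finite $V$-orbit of any compact open subgroup of $K$), and set $U=WV$, a compact open subgroup of $G$ with $U\cap K=W$ and $\rho(U)=V$. Writing $G=\langle U\cup F\rangle$ for a finite set $F$ and decomposing each $f\in F$ as $f=\kappa_f h_f$ with $\kappa_f\in K$ and $h_f\in H$, I claim $K$ equals the closed normal closure $K_0$ of the compact set $W\cup\{\kappa_f:f\in F\}$. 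The inclusion $K_0\subseteq K$ is clear; for the reverse, pass to $\bar G=G/K_0$, where every generator in $U=WV$ maps into the image of $H$ (as $W\subseteq K_0$) and each $\bar f=\bar h_f$ lies there too, so $\bar G$ is generated by the image of $H$. The retraction then descends to a splitting $\bar G\to H$ of $H\to\bar G$, forcing $\bar G\cong H$ and hence $K_0=K$.

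Step (B) is the heart of the matter, and I expect it to be the main obstacle. In the discrete case one simply adjoins the finitely many normal generators of $K$ to a finite presentation of $G$ as extra relators. Carrying this out in the language of finite generalised presentations is more delicate: given a finite generalised presentation $\phi\colon\pi_1(\ca A,\Lambda)\to G$ with $\ker\phi$ finitely generated as a normal subgroup, one cannot simply post-compose with $\rho$, because $\rho$ may collapse the profinite vertex groups $\ca A_v$, so $\rho\circ\phi$ will in general fail the injectivity-on-vertex-groups requirement. The plan is therefore to rebuild the graph of profinite groups, replacing each vertex and edge group by its image under $\rho\circ\phi$, checking that the edge maps remain injective (after absorbing the compact part of $K$ into the vertex stabilisers, using the adapted subgroup $U$ from step (A)), and then verifying that the kernel of the resulting presentation of $H$ is still finitely generated as a normal subgroup — the new normal generators being those of $\ker\phi$ together with the finitely many coming from the compact normal generators of $K$ produced in step (A). Controlling this interaction between the topology of the vertex groups and the normal generation is the technical crux.

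Finally, I would note that this Corollary is also subsumed by the later, more general Theorem~\ref{thm:quasiretract}: a group retract is in particular a quasi-retract, and type $\F_2$ is preserved under quasi-retracts. The direct argument above has the advantage of being self-contained at this point in the paper.
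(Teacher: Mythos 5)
Your strategy is essentially the argument the paper has in mind: its proof simply invokes the classical retract argument (Theorem~6 of \cite{Ratcliffe}), which is exactly your decomposition $G=K\rtimes H$, compact normal generation of $K=\ker\rho$, and passage to the quotient. Your preliminary reductions are sound: $\rho$ is open by the open mapping theorem since $G$ is $\sigma$-compact, the map $(k,h)\mapsto kh$ is a homeomorphism with continuous inverse $g\mapsto(g\rho(g)^{-1},\rho(g))$, and step (A) is correct (indeed one does not even need the adapted subgroup $U=WV$: for any compact generating set $U\cup F$ of $G$, the compact set $\{g\rho(g)^{-1}:g\in U\cup F\}$ normally generates $K$, by the same ``descend to $G/K_0$ and use the splitting'' argument you give). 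The one place where you have made life unnecessarily hard is step (B), which you leave as a plan and describe as the technical crux: the statement you need --- that the quotient of a compactly presented locally compact group by a closed normal subgroup which is compactly generated as a normal subgroup is again compactly presented --- is precisely \cite[Proposition 8.A.10]{cdlh}, which this paper already uses in the proof of Theorem~\ref{thm:compres}. Citing it turns your sketch into a complete proof and spares you the delicate rebuilding of the graph of profinite groups (where, as you rightly observe, $\rho\circ\phi$ need not be injective on vertex groups). Your closing remark that the corollary is also subsumed by Theorem~\ref{thm:quasiretract}, a group retract being a quasi-retract, is correct and matches the remark the paper makes immediately after the corollary.
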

\begin{proof}
This holds by the same argument as \cite[Theorem 6]{Ratcliffe}.
\end{proof}
\begin{rem} The general version of Corollary~\ref{cor:retract} in the context of $\sigma$-compact locally compact groups has been proved in \cite[8.A.12]{cdlh} from the point of view of coarse geometry.
\end{rem}
The next proposition imitates the well-known fact that maps $\Hom_{Grp}(G,-)$ from finitely presented groups $G$ commute with filtered colimits.
\begin{prop}
\label{compprescolimit}
\begin{enumerate}[(i)]
\item Suppose $(H_n)$ is a sequence of t.d.l.c. groups with quotient maps $h_n: H_n \to H_{n+1}$, and that the abstract colimit $H$, together with the quotient topology, is Hausdorff, and hence a t.d.l.c. group. If $G$ is compactly presented, then any map $f: G \to H$ factors through some $G \to H_n$.
\item Every compactly generated t.d.l.c. group $G$ can be written as a colimit of such a sequence $(G_n)$, with every $G_n$ compactly presented.
\item If $G$ is compactly presented, and written as a colimit of such a sequence $(G_n)$ with every $G_n$ compactly generated, this sequence stabilises, i.e. there is some $n$ such that $G_n \cong G$.
\end{enumerate}
\end{prop}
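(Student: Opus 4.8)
The plan is to prove the three parts in order, since part (i) provides the key universal property and parts (ii)--(iii) are applications of it combined with Proposition~\ref{prop:typeF2}.

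For part (i), I would argue as follows. Since $G$ is compactly presented, by Proposition~\ref{prop:typeF2} it is of type $\F_2$, so fix a generalised presentation $((\ca A,\Lambda),\phi)$ of $G$ with $\Lambda$ finite and $N=\ker(\phi)$ finitely generated as a normal subgroup of $\Pi=\pi_1(\ca A,\Lambda)$, say by $\omega_1,\dots,\omega_k$. A map $f\colon G\to H$ is determined by where it sends the compact generating data coming from the vertex groups $\ca A_v$, $v\in\ca V(\Lambda)$, together with the edges of $\Lambda$. Concretely, $f\circ\phi\colon\Pi\to H$ is a continuous homomorphism from the fundamental group of a finite graph of profinite groups, so it is determined by the (finitely many) compact vertex images and edge elements. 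The first step is to lift each such compact piece through some $G_m\to H$: because $H=\colim H_n$ carries the abstract colimit topology and each vertex group $\ca A_v$ is compact, the continuous image of $\ca A_v$ in $H$ lands in some $H_{n_v}$ (here one uses that the $h_n$ are quotient maps and the images are compact, so only finitely many of the maps $h_n$ can fail to be injective on a given compact set). Taking $n$ larger than all the finitely many indices $n_v$ and the finitely many indices needed for the edge elements, we obtain a continuous homomorphism $\psi\colon\Pi\to H_n$ lifting $f\circ\phi$. The second step is to check that $\psi$ kills $N$, so that it descends through $\phi$ to give the desired factorisation $G\to H_n$. Since $N$ is normally generated by $\omega_1,\dots,\omega_k$ and each $\psi(\omega_i)$ is the image of a relator that is killed by $f\circ\phi$, enlarging $n$ once more (finitely many conditions, one per $\omega_i$) forces $\psi(\omega_i)=1$ in $H_n$; as $\psi$ is a homomorphism and $N$ is the normal closure of the $\omega_i$, we get $\psi(N)=1$. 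Hence $\psi$ factors as $G\to H_n\to H$, as required.

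For part (ii), I would take a compact generating set and build an ascending chain of compactly presented quotients. Writing $G$ via a generalised presentation on a finite connected graph $\Lambda$ with $N=\ker(\phi)$, enumerate a (countable, by second countability of compactly generated t.d.l.c. groups) normal generating set $\{r_j\}_{j\in\N}$ of $N$, and let $N_m$ be the normal closure of $\{r_1,\dots,r_m\}$ together with the kernel contributions forcing the quotient to be Hausdorff. Set $G_m=\pi_1(\ca A,\Lambda)/N_m$; each $G_m$ is compactly presented by construction (finite graph, finitely many normal relators), the natural maps $G_m\to G_{m+1}$ are quotient maps, and $\colim G_m = G$ with the quotient topology being the original Hausdorff topology on $G$. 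The only care needed is to ensure each $G_m$ is genuinely a Hausdorff t.d.l.c. group rather than merely an abstract quotient; this is arranged by including in $N_m$ enough to make the quotient topology Hausdorff at each stage, which is possible since $G$ itself is Hausdorff.

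For part (iii), apply part (i) to the identity map $\iid\colon G\to G=\colim G_n$. Since $G$ is compactly presented, $\iid$ factors through some $G\to G_n$; composing with the colimit structure map $G_n\to G$ gives a continuous section of $G_n\to G$, and together with the surjection $G_n\to G$ this forces $G_n\cong G$ for that $n$, so the sequence stabilises.

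The main obstacle I expect is the topological bookkeeping in part (i): one must argue carefully that a continuous homomorphism out of $\Pi$ into the colimit $H$ lifts to a fixed stage $H_n$. The subtlety is that this is not automatic for arbitrary continuous maps into a colimit, and the argument genuinely relies on the \emph{compactness} of the vertex groups $\ca A_v$ and the finiteness of both $\Lambda$ and the normal generating set of $N$ --- precisely the two conditions (G1) and (G2) defining compact presentability. Verifying that the finitely many compact images stabilise in the colimit, and that the finitely many relator conditions can be imposed simultaneously at a single stage $n$, is where the real work lies; the rest is formal.
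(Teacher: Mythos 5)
Your overall strategy for (i) --- lift the data of a finite generalised presentation stage by stage, then kill the finitely many normal generators of $N$ at a later stage --- matches the paper's, and your treatment of the relators $\omega_i$ at the end is fine. The genuine gap is the step ``we obtain a continuous homomorphism $\psi\colon\Pi\to H_n$ lifting $f\circ\phi$''. Having lifted the vertex-group homomorphisms and chosen preimages of the edge elements does \emph{not} yet define a homomorphism out of $\pi_1(\ca A,\Lambda)$: one must also verify, in $H_n$, the edge-group compatibility relations, i.e.\ the conjugation identities $\psi(s)\psi(x)\psi(s)^{-1}=\psi(sxs^{-1})$ for \emph{every} $x$ in a compact open (hence typically infinite) edge group. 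These hold in $H$ because $f\phi$ is a homomorphism, but descending them to a single stage $H_n$ is an infinite, not finite, family of conditions; your closing paragraph explicitly describes the remaining work as ``finitely many relator conditions'', conflating the normal generators of $N$ (finitely many) with these edge relations (a compact family). This is exactly where the paper's proof does its real work: for each generator $s$ it considers the closed subsets $V_n\subseteq V\cap s^{-1}Vs$ on which the relation holds in $H_n$, applies the Baire category theorem to find an $n$ with $V_n$ containing a coset of an open subgroup, and then uses a finite-index calculation to conclude $V_n=V\cap s^{-1}Vs$. A Baire argument of the same kind (on the increasing union of closed kernels $K_n=\ker(W\to H_n)$ inside a profinite group $W$) is also what justifies your first step of lifting $f$ on a compact open subgroup; your parenthetical ``only finitely many of the maps $h_n$ can fail to be injective on a given compact set'' is an unproved restatement of that point, not a reason for it. Without these two arguments the proof of (i) is incomplete.

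Two smaller points. In (ii), countability of the normal generating set does not come from second countability (which compactly generated t.d.l.c.\ groups need not satisfy) but from the fact that $N=\ker\phi$ is a \emph{discrete} subgroup of the $\sigma$-compact group $\pi_1(\ca A,\Lambda)$, hence countable; with that correction your construction is essentially the paper's, which quotients a compactly presented cover $G_0$ by the closures of the normal closures of initial segments of an enumeration of the discrete kernel. In (iii), the final inference is a non sequitur: a continuous splitting $s\colon G\to G_n$ of $q_n\colon G_n\to G$ gives $G_n\cong\ker(q_n)\rtimes s(G)$ but does not force $\ker(q_n)=1$ (split epimorphisms need not be isomorphisms). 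The correct argument parallels the abstract one: since $G$ is compactly presented and $G_n$ compactly generated, $\ker(q_n)$ is compactly generated as a normal subgroup, and it is the increasing union of the closed normal subgroups $\ker(G_n\to G_m)$, so a Baire-type argument shows it is exhausted at some finite stage $m$, whence $G_m\cong G$.
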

\begin{proof}
\begin{enumerate}[(i)]
\item Write $k_n$ for the canonical map $H_n \to H$. Fix a compact open subgroup $U$ of $H_0$; the image $U'$ of this in $H$ is again a compact open subgroup (because $H$ has the quotient topology from $H_0$). Let $V' = f^{-1}(U')$ and pick a compact open subgroup $V$ of $V'$.

We start by showing that $f\vert_V$ factors through some $V \to H_n$. Indeed, let $W \leq U$ be the preimage in $U$ of $f(V)$, $K$ the kernel of $k_0\vert_W$, and $K_n$ the kernel of $(h_{n-1}h_{n-2}\ldots h_0)\vert_W$. By hypothesis, $\bigcup_n K_n = K$, so by standard techniques for profinite groups (resulting from the Baire category theorem) $K_m = K$ for some $m$. Write $g_n$ for the resulting map $V \to H_n$ for $n \geq m$.

Next, choose a  generating system $G = \langle V,S \rangle$ with $S$ a finite symmetric set disjoint from $V$. From this, construct a generalised presentation $\phi: \pi_1(A,\Lambda) \to G$ using the construction from the proof of \cite[Proposition 5.10]{it:ratdiscoh}. We claim there is some $n$ such that $f\phi$ factors through some $\pi_1(A,\Lambda) \to H_n$.

Write $g'_m$ for some choice of lift $S \to H_m$ of $f\vert_S$ such that if $s,s^{-1} \in S$, $g'_m(s^{-1})=g'_m(s)^{-1}$; write $g'_n$ for the composite $h_{n-1}h_{n-2}\ldots h_mg'_m$. To prove the claim, it remains to show for each $s \in S$ that there is some $n \geq m$ such that $g_n(sxs^{-1}) = g'_n(s)g_n(x)g'_n(s)^{-1}$ for all $x \in V \cap s^{-1}Vs$ -- then all the relations of \cite[(5.15)]{it:ratdiscoh} will be satisfied. For each $n \geq m$, write $V_n$ for the set of $x \in V \cap s^{-1}Vs$ for which this holds; note $V_n$ is closed in $V \cap s^{-1}Vs$ because $H_n$ is Hausdorff, and $\bigcup_{n \geq m} V_n = V \cap s^{-1}Vs$ by hypothesis. By the Baire category theorem, there is some $m' \geq n$ such that $V_{m'}$ contains an open set of $V \cap s^{-1}Vs$, so it contains a coset $vZ$ of some open subgroup $Z$ with $v \in V \cap s^{-1}Vs$. As $Z$ has finite index in $V \cap s^{-1}Vs$, a quick calculation shows there is some $m'' \geq m'$ such that $V_{m''} = V \cap s^{-1}Vs$.

Since $S$ is finite, and this holds for all $s \in S$, the claim follows. Then we are done by the same method as for finitely presented abstract groups, since by \cite[Proposition 5.10]{it:ratdiscoh} $\ker(\phi)$ is finitely generated.
\item If $G$ is compactly presented, take $G_n=G$ for all $n$; assume it is not. By the proof of \cite[Proposition 5.10]{it:ratdiscoh}, construct a compactly presented t.d.l.c. group $G_0$ with a quotient map $G_0 \to G$. The kernel $K$ of this map is discrete, and countable because $G_0$ is $\sigma$-compact. Now enumerate the elements $(k_n)$ of $K$, and define $G_n = G_0/\overline{\llangle k_0,\ldots,k_n\rrangle}$ for $n \geq 1$: clearly these $G_n$, with the obvious maps between them, give the sequence we want.
\item The result follows in the same way as for finitely presented abstract groups, using similar techniques to (i); it is left as an exercise. 
\end{enumerate}
\end{proof}
Note the condition that $\colim_n G_n = G$ as abstract groups as well as t.d.l.c. groups -- otherwise it is easy enough to find counterexamples using profinite groups, which are always compactly presented. We will describe this situation by saying that $G$ is the abstract and topological colimit of the sequence.

\subsection{T.d.l.c. groups of type $\FP_n$ and $\KP_n$}
A discrete (left) $\RG$-module $M$ is said to be {\it finitely generated} if it contains a set of finitely many elements which is not contained in any proper submodule. A resolution $P\to M$ in $\RGdis$ is said to be {\it finitely generated} (or of {\it finite type}) if the discrete $\RG$-module $P_k$ is finitely generated in each dimension $k\geq0$. Recall that, by van Dantzig's theorem, every discrete $\RG$-module $M$ has a proper discrete permutation resolution. If the discrete $\RG$-module $M$ admits a finitely generated proper discrete permutation resolution then we say that $M$ has {\it type $\FP_\infty$ over $R$}. Note that we are not asking for  projective permutation modules. 

Moreover, if the module $M$ has a finitely generated partial proper discrete permutation resolution of length $n$, i.e., there exists $n\geq0$ such that $P_k$ is finitely generated for $0\leq k\leq n$, we say that $M$ has {\it type $\FP_n$ over $\RR$}. If $M$ has a finite type proper discrete permutation resolution of finite length, we say $M$ has {\it type $\FP$ over \RR}. 
E.g., $M$ is of type $\FP_0$ if and only if $M$ is finitely generated and that $M$ is of type $\FP_1$ if and only if $M$ is finitely presented. Note that the (partial) resolution $P\to M$ is not required to be projective; it will be so automatically for $\RR=\Q$.

\medskip

Now, given a $k$-$\RG$-module $A$, we say it is {\it compactly generated} if there is a CH-split map onto $A$ from a free module on a compact Hausdorff space. Consider projective resolutions of $A$ in the category $\RGtop$. Explicitly, this means an exact chain complex $$\cdots \to P_1 \xrightarrow{d_1} P_0 \xrightarrow{d_0} A \to 0$$ such that, for all $n$, the map $P_n \to \ker(d_{n-1})$ is CH-split. We say $A$ has {\it type $\KP_n$ over $\RR$}, $n \leq \infty$, if it has a projective resolution $P\to A$ in $\RGtop$ with $P_i$ compactly generated for $i \leq n$. Schanuel's lemma applies to these projective resolutions, so by standard methods we get:

\begin{lem} Let $G$ be a t.d.l.c. group and $A$ a $k$-$\RG$-module.
\begin{enumerate}[(i)]
\item If $A$ has type $\KP_n$ over $\RR$ and $$P_{n-1} \to P_{n-2} \to \cdots \to P_0 \to A \to 0$$ is any partial projective resolution of $A$ in $\RGtop$ with $P_i$ compactly generated for $i \leq n-1$, then $\ker(P_{n-1} \to P_{n-2})$ is compactly generated.
\item If $A$ has type $\KP_n$ over $\RR$ for all finite $n$, it has type $\KP_\infty$ over $\RR$.
\end{enumerate}
\end{lem}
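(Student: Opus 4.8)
\emph{Proof plan.} Throughout I would work in the compact Hausdorff exact structure on $\RGtop$, which is genuinely exact by \cite[Proposition 4.3]{BC}, and record first the closure properties that make everything run. A finite direct sum of compactly generated modules is compactly generated, since a finite disjoint union of compact Hausdorff spaces is compact Hausdorff and CH-split epimorphisms are stable under finite direct sums. A CH-split quotient of a compactly generated module is compactly generated, because CH-split maps compose. And a direct summand $K$ of a compactly generated module $M$ is compactly generated: composing a CH-split cover $F \to M$ by a free module on a compact Hausdorff space with the summand projection $M \to K$ yields a map that still has the right lifting property against projectives, hence is CH-split. In particular a free module on a compact Hausdorff space is simultaneously projective and compactly generated, and every compactly generated module admits a CH-split epimorphism from such a module.

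\textbf{Part (i).} The plan is the iterated Schanuel argument. Using type $\KP_n$, I would fix a projective resolution $P' \to A$ with each $P'_i$ compactly generated for $i \le n$, and set $K'_{n-1} = \ker(P'_{n-1} \to P'_{n-2})$; the defining CH-split map $P'_n \to K'_{n-1}$ exhibits $K'_{n-1}$ as a CH-split quotient of the compactly generated module $P'_n$, so $K'_{n-1}$ is compactly generated. Then I would compare this with the given partial resolution $P_\bullet$ one degree at a time. Writing $K_i = \ker(P_i \to P_{i-1})$ and $K'_i$ similarly, with the convention $K_{-1} = K'_{-1} = A$, the comparison maps $P_i \to K_{i-1}$ and $P'_i \to K'_{i-1}$ are CH-split, hence admissible epimorphisms, so
\begin{equation*}
0 \to K_i \to P_i \to K_{i-1} \to 0 \quad\text{and}\quad 0 \to K'_i \to P'_i \to K'_{i-1} \to 0
\end{equation*}
are conflations. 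Basic Schanuel in the bottom degree gives $K_0 \oplus P'_0 \cong K'_0 \oplus P_0$, and feeding this into the next conflation and iterating yields $K_{n-1} \oplus Q \cong K'_{n-1} \oplus Q'$, where $Q, Q'$ are finite direct sums of the compactly generated modules $P_i, P'_i$ with $i \le n-1$. The right-hand side is compactly generated, so the summand $K_{n-1} = \ker(P_{n-1} \to P_{n-2})$ is compactly generated, as claimed. The only real obstacle is invoking Schanuel in the correct exact structure: it works precisely because the compact Hausdorff structure is exact and the comparison maps in a projective resolution are, by definition, CH-split and therefore admissible epimorphisms, so that the displayed sequences are genuine conflations and the summand bookkeeping is legitimate; everything else is the dictionary between ``compactly generated'' and resolutions supplied by the closure properties above.

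\textbf{Part (ii).} Here I would construct, by induction, a projective resolution of $A$ all of whose terms are compactly generated, invoking part (i) at each stage. Using type $\KP_0$, start with a compactly generated projective $P_0$ and a CH-split epimorphism $P_0 \to A$. Given compactly generated projectives $P_0, \ldots, P_{m-1}$ forming the start of a projective resolution, apply part (i) with $n = m$ -- valid since $A$ has type $\KP_m$ and each $P_i$ is compactly generated for $i \le m-1$ -- to conclude that $K_{m-1} = \ker(P_{m-1} \to P_{m-2})$ is compactly generated. Then choose a CH-split epimorphism onto $K_{m-1}$ from a free module $P_m$ on a compact Hausdorff space; this is a compactly generated projective extending the resolution by one degree. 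Iterating over all $m$ produces a projective resolution $P \to A$ with every $P_i$ compactly generated, which is exactly type $\KP_\infty$.
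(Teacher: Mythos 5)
Your proposal is correct and follows exactly the route the paper intends: the paper states this lemma with only the remark that Schanuel's lemma applies to these projective resolutions and the result follows ``by standard methods,'' and your iterated Schanuel argument in the compact Hausdorff exact structure, together with the closure properties of compactly generated modules under finite sums, CH-split quotients and direct summands, is precisely that standard method written out. No discrepancy to report.
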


We can also define type $\K_n$ by considering the contractible cofibrant objects with cocompact $n$-skeleta, in the compact Hausdorff model structure on $G$-$k$-spaces (see \cite{Ged}), by analogy with the abstract case.

Trivially, profinite groups have type $\K_\infty$ and $\KP_\infty$ (consider the bar resolution). By inducing from a compact open subgroup, finitely generated proper discrete $\RG$-permutation modules have type $\KP_\infty$.

\begin{prop}
\label{prop:KPn}
Let $0\to A' \xrightarrow{f} A \xrightarrow{g} A'' \to 0$ be a short exact sequence of $k$-$\RG$-modules. Then the following statements hold over $\RR$:
\begin{enumerate}[(a)]
\item If $A'$ has type $\KP_{n-1}$ and $A$ has type $\KP_n$, then $A''$ has type $\KP_n$;
\item If $A$ has type $\KP_{n-1}$ and $A''$ has type $\KP_n$, then $A'$ has type $\KP_{n-1}$;
\item If $A'$ and $A''$ have type $\KP_n$ then so does $A$.
\end{enumerate}
\end{prop}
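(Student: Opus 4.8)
The plan is to prove the three statements by a dimension-shifting argument, building on the behaviour of projective resolutions in $\RGtop$ and the characterisation of type $\KP_n$ from the preceding lemma. The basic strategy throughout is the standard one: choose compactly generated projective partial resolutions of two of the three modules, combine them via the horseshoe-type construction adapted to the left exact (indeed exact) structure on $\RGtop$ given by CH-split epimorphisms, and read off finiteness of the remaining kernel. Since projective objects in the compact Hausdorff structure are summands of free modules on disjoint unions of compact Hausdorff spaces, ``compactly generated'' is preserved under the finite direct sums that appear, which is what makes the bookkeeping work.

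First I would prove $(c)$, as it is the cleanest and its proof motivates the others. Given CH-split projective resolutions $P'_\bullet \to A'$ and $P''_\bullet \to A''$ with $P'_i, P''_i$ compactly generated for $i \leq n$, I would set $P_i = P'_i \oplus P''_i$ and construct differentials making $P_\bullet \to A$ a projective resolution. The key point is the horseshoe lemma in this setting: because $P''_i$ is projective and $g \colon A \to A''$ is CH-split, the components of the differential into $A$ and the connecting maps lift, exactly as in the abelian case; one checks this lifting is legitimate in the exact structure using that CH-split epimorphisms have the right lifting property against projectives. Each $P_i = P'_i \oplus P''_i$ is a finite direct sum of compactly generated modules, hence compactly generated (a free module on the disjoint union of the two underlying compact Hausdorff spaces surjects CH-split onto it), which gives type $\KP_n$ for $A$.

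For $(a)$ and $(b)$ I would argue by dimension shifting using part $(i)$ of the preceding lemma. For $(b)$, take a compactly generated partial projective resolution of $A''$ up to degree $n-1$ and a CH-split projective resolution of $A$ up to degree $n-1$; the mapping-cone / comparison construction produces a partial projective resolution of $A'$ whose terms are again finite direct sums of the given compactly generated projectives, and the lemma then identifies the relevant kernel as compactly generated, yielding type $\KP_{n-1}$ for $A'$. For $(a)$, the same circle of ideas applies: from type $\KP_{n-1}$ for $A'$ and type $\KP_n$ for $A$ one builds a compactly generated partial resolution of $A''$ of length $n$, again because summing compactly generated projectives preserves compact generation and because the long exact sequence associated to the short exact sequence lets one propagate finiteness of kernels one step at a time via the lemma.

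The main obstacle I anticipate is not the algebra of dimension shifting, which is formally identical to the classical $\FP_n$ arguments for discrete groups, but rather verifying that all the lifts and the horseshoe construction are valid \emph{in the exact structure of $\RGtop$} rather than in an honest abelian category. Concretely, one must check that the relevant maps are CH-split so that projectivity can be invoked to lift them, and that kernels and images appearing in the construction are themselves objects of $\RGtop$ behaving as expected (which is where the exactness of the compact Hausdorff structure, via \cite[Proposition 4.3]{BC}, is essential). Once one is satisfied that CH-split epimorphisms of compactly generated modules fit into the machinery of Schanuel's lemma as stated in the preceding lemma, the three parts follow by the routine book-keeping of finite direct sums of compactly generated projectives.
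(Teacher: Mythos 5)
Your proposal is correct and follows essentially the same route as the paper: the horseshoe lemma for (c) and a mapping cone of a lift of $A' \to A$ for (a), with the same care needed to check that lifts and splittings exist in the compact Hausdorff exact structure. The only divergence is in (b), where the paper dimension-shifts more economically via a single compactly generated projective $Q \twoheadrightarrow A$, the snake lemma applied to $0 \to \ker(gq) \to \ker(q) \to A' \to 0$, and two applications of (a), rather than your mapping cone of a lift of $g$ (which also works, though note the resolution of $A''$ must be taken compactly generated up to degree $n$, not $n-1$, for the index bookkeeping to come out).
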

\begin{proof}
\begin{enumerate}[(a)]
\item Take a type $\KP_{n-1}$ resolution $P'$ of $A'$ and a type $\KP_n$ resolution $P$ of $A$. There is a map $P' \to P$ extending $A' \to A$. The mapping cone of this is a type $\KP_n$ resolution of $A''$.
\item Fix a map $Q \stackrel{q}{\twoheadrightarrow} A$ with $Q$ compactly generated projective. By (a), $\ker(q)$ is of type $\KP_{n-2}$ and $\ker(gq)$ is of type $\KP_{n-1}$. By the snake lemma, $0 \to \ker(gq) \to \ker(q) \to A' \to 0$ is exact. By (a), $A'$ is of type $\KP_{n-1}$.
\item Use the Horseshoe lemma.
\end{enumerate}
\end{proof}

\begin{thm}
\label{fpnkpn}
Let $G$ be a t.d.l.c. group. A discrete $\RG$-module $M$ has type $\FP_n$ over $\RR$ if and only if it has type $\KP_n$ over $\RR$.
\end{thm}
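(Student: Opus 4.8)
The plan is to deduce both implications from the short exact sequence behaviour recorded in Proposition~\ref{prop:KPn}, using two facts already in hand: first, every finitely generated proper discrete permutation $\RG$-module has type $\KP_\infty$ (by inducing from a compact open subgroup); and second, since the compact Hausdorff structure of $\RGtop$ restricts to the usual abelian structure on $\RGdis$ (see the end of \S\ref{ss:topmod}), every short exact sequence of discrete $\RG$-modules is a conflation in $\RGtop$, so Proposition~\ref{prop:KPn} applies to it. Throughout I would adopt the convention that type $\KP_{-1}$ (equivalently $\FP_{-1}$) is no condition at all, which lets the base cases fall out of the general step.

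For the implication $\FP_n \Rightarrow \KP_n$ I would argue by dimension shifting along a given resolution. Choose a proper discrete permutation resolution $\cdots \to P_1 \to P_0 \to M \to 0$ with $P_0, \dots, P_n$ finitely generated, set $K_{-1} = M$ and $K_k = \ker(P_k \to P_{k-1})$ for $0 \le k \le n$, and record the conflations $0 \to K_k \to P_k \to K_{k-1} \to 0$. Each $P_k$ has type $\KP_\infty$, so applying Proposition~\ref{prop:KPn}(a) to the topmost conflation gives $K_{n-1}$ of type $\KP_0$ (the kernel term $K_n$ is only required to have type $\KP_{-1}$), and applying it successively down the tower yields $K_{k-1}$ of type $\KP_{n-k}$, ending with $M = K_{-1}$ of type $\KP_n$.

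For the converse $\KP_n \Rightarrow \FP_n$ I would induct on $n$. The base case $n=0$ is the observation that a discrete module $M$ of type $\KP_0$ is finitely generated: type $\KP_0$ provides a CH-split surjection onto $M$ from a free $k$-$\RG$-module on a compact Hausdorff space $X$, and the induced continuous map $X \to M$ into the discrete module $M$ has finite image, which then generates $M$; hence $M$ has type $\FP_0$. For the inductive step, use van Dantzig's theorem to produce a finitely generated proper discrete permutation module $P_0$ (replacing the stabilisers of a finite generating set by compact open subgroups contained in them) together with a surjection $P_0 \twoheadrightarrow M$, and set $K = \ker(P_0 \to M)$, a discrete module fitting into the conflation $0 \to K \to P_0 \to M \to 0$. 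Since $P_0$ has type $\KP_\infty$ and $M$ has type $\KP_n$, Proposition~\ref{prop:KPn}(b) forces $K$ to have type $\KP_{n-1}$; by the inductive hypothesis $K$ then has type $\FP_{n-1}$, and splicing a finitely generated proper discrete permutation resolution of $K$ onto $P_0 \to M$ exhibits $M$ as type $\FP_n$.

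I expect the converse to be where the real content lies. The forward implication is essentially formal once Proposition~\ref{prop:KPn} is available, whereas the converse must transport finiteness out of the topological category $\RGtop$ back into $\RGdis$. The two points demanding care are the base case -- identifying $\KP_0$ with finite generation through the compact-image argument, which is precisely where discreteness of $M$ is used -- and the verification that the kernel $K$ produced in the inductive step is genuinely a \emph{discrete} module, so that the inductive hypothesis (stated for discrete modules) may be invoked. Both rest on the compatibility of the exact structures of $\RGdis$ and $\RGtop$ noted at the end of \S\ref{ss:topmod}.
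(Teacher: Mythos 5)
Your proposal is correct and follows essentially the same route as the paper's proof: the base case identifies $\KP_0$ with finite generation via the finiteness of the image of a compact space in a discrete module, and both directions then reduce to Proposition~\ref{prop:KPn} together with the fact that finitely generated proper discrete permutation modules have type $\KP_\infty$. The only cosmetic difference is that you unroll the forward implication as an explicit dimension shift down the tower of kernels rather than as an induction on $n$, which is the same argument.
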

\begin{proof}
First note that $M$ is finitely generated ($\FP_0$ over $\RR$) if and only if it is compactly generated ($\KP_0$ over $\RR$), because the image of any compact space in $M$ is finite. Indeed, given $f: R[G \times X] \xrightarrow{CH} M$ with $X$ compact, the restriction $G \times X \to M$ factors through a discrete quotient $G/H \times X' \to M$ with $H$ compact and open and $X'$ finite. Thus $f$ factorises through the induced map $f':R[G/H \times X'] \to M$, so $f'$ is CH-split. Conversely, any surjective map onto a discrete module is CH-split, so any finite choice $S$ of generators for $M$ gives a CH-split map $R[G \times S] \to M$.

Now we argue inductively, using Proposition \ref{prop:KPn}: we have already shown the base case holds.

Suppose $M$ has type $\KP_n$, so it is finitely generated. Take a finitely generated proper discrete permutation $\RG$-module $P$ and an epimorphism $P \to M$ with kernel $K$. $P$ has type $\KP_\infty$ so, by Proposition \ref{prop:KPn}, $K$ has type $\KP_{n-1}$; by hypothesis, $K$ has type $\FP_{n-1}$, so we can concatenate sequences to show $M$ has type $\FP_n$.

Conversely, suppose $M$ has type $\FP_n$. Take a length $n$ partial resolution of $M$ by finitely generated proper discrete $\RG$-permutation modules $$P_n \to \cdots \to P_1 \xrightarrow{d_1} P_0 \xrightarrow{d_0} M \to 0.$$  Then $\ker(d_0)$ has type $\FP_{n-1}$ so by hypothesis it has type $\KP_{n-1}$. $P_0$ has type $\KP_\infty$ so by Proposition \ref{prop:KPn} $M$ has type $\KP_n$.
\end{proof}

\begin{cor}
\label{FPSchanuel} Let $G$ be a t.d.l.c. group and $M$ a discrete $\RG$-module.
\begin{enumerate}[(i)]
\item If $M$ has type $\FP_n$ over $\RR$ and $$P_{n-1} \to P_{n-2} \to \cdots \to P_0 \to A \to 0$$ is a partial proper discrete permutation resolution of finite type in $\RGdis$, then $\ker(P_{n-1} \to P_{n-2})$ is finitely generated.
\item If $M$ has type $\FP_n$ over $\RR$ for all finite $n$, it has type $\FP_\infty$ over $\RR$.
\end{enumerate}
\end{cor}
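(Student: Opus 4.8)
The plan is to deduce both statements from Theorem~\ref{fpnkpn}, which asserts that type $\FP_n$ over $\RR$ coincides with type $\KP_n$ over $\RR$, together with the corresponding lemma for $\KP_n$-modules that was proved just before the theorem (the ``Schanuel-type'' lemma establishing parts (i) and (ii) in the topological setting). Since the hard topological work has already been done in that lemma, the role of this corollary is essentially to transport those facts across the equivalence of Theorem~\ref{fpnkpn} into the purely algebraic language of proper discrete permutation resolutions.

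For part (i), first I would note that a partial proper discrete permutation resolution of finite type is in particular a partial projective resolution in $\RGtop$ whose terms $P_i$ are compactly generated: each finitely generated proper discrete permutation module has type $\KP_\infty$ (as remarked before Proposition~\ref{prop:KPn}), and in particular it is a compactly generated object of $\RGtop$. Since $M$ has type $\FP_n$, by Theorem~\ref{fpnkpn} it has type $\KP_n$, so the $\KP$-version of the Schanuel lemma (part (i) of the lemma preceding Theorem~\ref{fpnkpn}) applies to this partial resolution and tells us that $\ker(P_{n-1} \to P_{n-2})$ is compactly generated as a $k$-$\RG$-module. But this kernel is a discrete $\RG$-module (being a submodule of the discrete module $P_{n-1}$), and for discrete modules compact generation coincides with finite generation by the base case established in the proof of Theorem~\ref{fpnkpn}. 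Hence $\ker(P_{n-1} \to P_{n-2})$ is finitely generated, as claimed.

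Part (ii) follows the same template: if $M$ has type $\FP_n$ over $\RR$ for every finite $n$, then by Theorem~\ref{fpnkpn} it has type $\KP_n$ over $\RR$ for all finite $n$, whence by part (ii) of the $\KP$-version of the lemma it has type $\KP_\infty$ over $\RR$, and a final application of Theorem~\ref{fpnkpn} returns type $\FP_\infty$ over $\RR$. The only point requiring any care, and the main obstacle in writing this cleanly, is the identification in part (i) of the abstract (algebraic) kernel with the kernel computed in $\RGtop$, so that the topological conclusion genuinely descends to a statement about finite generation in $\RGdis$; this rests on the observation that $\RGdis$ is an abelian subcategory of $\RGtop$ whose compact Hausdorff structure restricts to the usual abelian structure, so kernels agree, together with the equivalence of ``compactly generated'' and ``finitely generated'' for discrete modules noted above.
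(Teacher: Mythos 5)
Your overall strategy -- translate to $\KP_n$ via Theorem~\ref{fpnkpn}, invoke the topological Schanuel-type lemma, and descend back to $\RGdis$ using the fact that compactly generated discrete modules are finitely generated -- is exactly the route the paper intends (it says explicitly that Corollary~\ref{FPSchanuel} rests on Theorem~\ref{fpnkpn}), and your care over the identification of kernels in $\RGdis$ versus $\RGtop$ is well placed. However, there is a genuine gap in part (i): a finitely generated proper discrete permutation module is compactly generated and of type $\KP_\infty$, but it is \emph{not} in general a projective object of $\RGtop$ in the compact Hausdorff structure. For instance, $\RR[G/U]$ with $U$ an infinite compact open subgroup is not a summand of any free module $\RR[G\times X]$: a splitting of the CH-split epimorphism $\RG\to\RR[G/U]$ would require an element of $\RG$ fixed under left multiplication by all of $U$, and no nonzero such element exists since elements of $\RG$ have compact (here finite) support. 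Indeed, the failure of projectivity of these modules (already in $\RGdis$ for $\RR=\Z$) is the paper's stated motivation for introducing $\RGtop$ at all. Consequently the hypothesis of the $\KP$-Schanuel lemma -- that the given complex be a \emph{partial projective resolution} in $\RGtop$ -- is not satisfied by a proper discrete permutation resolution, and your appeal to that lemma does not go through as written.

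The repair is short: instead of the Schanuel lemma, apply Proposition~\ref{prop:KPn}(b) inductively (dimension shifting), which only needs the terms to have type $\KP_\infty$, not to be projective. Writing $K_0=\ker(P_0\to M)$ and $K_i=\ker(P_i\to P_{i-1})$, exactness gives short exact sequences $0\to K_i\to P_i\to K_{i-1}\to 0$; since each $P_i$ has type $\KP_\infty$ and $M$ has type $\KP_n$, one gets successively that $K_i$ has type $\KP_{n-1-i}$, so $K_{n-1}$ has type $\KP_0$, i.e.\ is compactly generated, hence finitely generated as you argue. A further minor point: for part (ii), your round trip through $\KP_\infty$ tacitly uses the $n=\infty$ case of Theorem~\ref{fpnkpn}, whose proof in the paper is an induction over finite $n$; it is cleaner to deduce (ii) directly from (i) by building the resolution one degree at a time, choosing at each stage a finitely generated proper discrete permutation module mapping onto the (finitely generated, by (i)) kernel.
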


\begin{cor}
\label{cor:FPn}
Let $0\to M' \xrightarrow{f} M \xrightarrow{g} M'' \to 0$ be a short exact sequence of discrete $\RG$-modules. Then the following statements hold over $\RR$:
\begin{enumerate}[(a)]
\item If $M'$ is of type $\FP_{n-1}$ and $M$ of type $\FP_n$, then $M''$ is of type $\FP_n$;
\item If $M$ is of type $\FP_{n-1}$ and $M''$ of type $\FP_n$, then $M'$ is of type $\FP_{n-1}$;
\item If $M'$ and $M''$ are of type $\FP_n$ then so is $M$.
\end{enumerate}
\end{cor}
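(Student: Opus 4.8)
Corollary \ref{cor:FPn} is the $\FP_n$-analogue of the short-exact-sequence behaviour already established for $\KP_n$ in Proposition \ref{prop:KPn}, so the cleanest strategy is not to reprove anything from scratch but to transport the $\KP$-statement across the equivalence of Theorem \ref{fpnkpn}. The plan is therefore to observe that a short exact sequence $0 \to M' \to M \to M'' \to 0$ of discrete $\RG$-modules is in particular a short exact sequence of $k$-$\RG$-modules, since $\RGdis$ is an abelian subcategory of $\RGtop$ whose exact structure is the restriction of the compact Hausdorff structure. Then each hypothesis of the form ``$M$ has type $\FP_m$'' is, by Theorem \ref{fpnkpn}, equivalent to ``$M$ has type $\KP_m$'', and the three implications (a), (b), (c) follow immediately from the corresponding three parts of Proposition \ref{prop:KPn}.

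Concretely, for part (a) I would argue: $M'$ of type $\FP_{n-1}$ gives $M'$ of type $\KP_{n-1}$, and $M$ of type $\FP_n$ gives $M$ of type $\KP_n$; by Proposition \ref{prop:KPn}(a), $M''$ has type $\KP_n$; applying Theorem \ref{fpnkpn} once more yields $M''$ of type $\FP_n$. Parts (b) and (c) are verbatim the same translation applied to Proposition \ref{prop:KPn}(b) and (c) respectively. No separate induction or diagram chase is needed at the $\FP$ level, because all the real work, namely the snake-lemma and horseshoe-lemma arguments together with the Schanuel-type finiteness input, has already been carried out in the $\KP$ setting.

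The one point that genuinely requires care, and which I expect to be the only possible obstacle, is the compatibility of the two exact structures: I must be sure that the short exact sequence of discrete modules in the hypothesis really is an admissible short exact sequence in the exact structure used to define $\KP_n$ (so that Proposition \ref{prop:KPn} applies unchanged), and conversely that the modules produced by Proposition \ref{prop:KPn} are again discrete so that Theorem \ref{fpnkpn} can be re-applied to them. Both are guaranteed by the final remark of \S\ref{ss:topmod}, namely that $\RGdis$ sits inside $\RGtop$ as an abelian subcategory whose structure is precisely the restriction of the compact Hausdorff structure; in particular any subquotient arising in the argument that is a priori only a $k$-$\RG$-module is in fact discrete whenever its subquotients in the discrete sequence are. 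Once this identification is made explicit, the corollary is a formal consequence of Theorem \ref{fpnkpn} and Proposition \ref{prop:KPn}, and I would present it as such rather than duplicating the homological bookkeeping.
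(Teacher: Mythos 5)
Your proposal is correct and matches the paper's intent exactly: the paper states Corollary \ref{cor:FPn} without a separate proof precisely because it is the formal translation of Proposition \ref{prop:KPn} through the equivalence of Theorem \ref{fpnkpn}, using that the exact structure on $\RGdis$ is the restriction of the compact Hausdorff structure on $\RGtop$. The only simplification worth noting is that all three modules $M'$, $M$, $M''$ are already given as discrete, so the ``translate back'' step only ever applies Theorem \ref{fpnkpn} to one of the originally given modules and no discreteness of auxiliary objects needs to be checked.
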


The t.d.l.c. group $G$ is said to be {\it of type $\FP_n$} (respectively, $\FP$) over $R$, $n\in\N\cup\{\infty\}$, if the trivial $\RG$-module $\RR$ is of type $\FP_n$ (respectively, $\FP$). Since $\Q$ is flat over $\Z$, if $A$ has type $\FP_n$ or $\FP$ over $\Z$, $A \otimes_\Z \Q$ has the same type over $\Q$; in particular, if $G$ has type $\FP_n$ or $\FP$ over $\Z$, it has the same type over $\Q$.

\medskip
Clearly, a t.d.l.c. group $G$ of type $F_n$ is also of type $\FP_n$ ($n\in\N\cup\{\infty\}$) over any ring $R$, by Fact~\ref{fact:cchain}. By considering discrete groups, it is clear that $\FP_n$ does not imply $\F_n$ in the context of t.d.l.c. groups. The following result shows that it does hold for compactly presented t.d.l.c. groups.

\begin{prop}
\label{prop:FvsFP}
For compactly presented t.d.l.c. groups, being of type $\F_n$ over $\Z$ is equivalent to being of type $\FP_n$ over $\Z$.
\end{prop}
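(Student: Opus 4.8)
The forward implication, that type $\F_n$ forces type $\FP_n$ over $\Z$, has already been recorded above as a consequence of Fact~\ref{fact:cchain}, so the plan is to prove the converse: a compactly presented $G$ of type $\FP_n$ over $\Z$ is of type $\F_n$. Recall that type $\FP_n$ over $\Z$ entails type $\FP_m$ over $\Z$ for every $m\leq n$ (truncate a finite-type partial resolution), so all the finiteness inputs needed below are available. I would build a contractible proper discrete $G$-CW-complex skeleton by skeleton, keeping each skeleton of finite type and invoking the Hurewicz theorem at every stage to turn homological finiteness into homotopical finiteness. The construction starts from the $2$-skeleton: since compact presentation means type $\F_2$ (Proposition~\ref{prop:typeF2}), I fix a contractible proper discrete $G$-CW-complex $X$ of type $\F_2$ and set $Y_{(2)}=X^{(2)}$, which is of finite type and simply connected because $\pi_1(Y_{(2)})=\pi_1(X)=1$.

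The inductive step runs, for each $n\geq 3$ up to the target dimension, maintaining a proper discrete $G$-CW-complex $Y_{(n-1)}$ of finite type that is $(n-2)$-connected. Its augmented cellular chain complex in degrees $\leq n-1$,
\begin{equation*}
C_{n-1}(Y_{(n-1)},\Z)\xrightarrow{\delta_{n-1}}C_{n-2}(Y_{(n-1)},\Z)\to\cdots\to C_0(Y_{(n-1)},\Z)\to\Z\to 0,
\end{equation*}
is then a finite-type partial proper discrete permutation resolution of $\Z$, being exact except at $C_{n-1}(Y_{(n-1)},\Z)$ by the $(n-2)$-connectivity of $Y_{(n-1)}$. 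As $G$ has type $\FP_n$ over $\Z$, Corollary~\ref{FPSchanuel}(i) shows $\ker(\delta_{n-1})$ is finitely generated; and since $Y_{(n-1)}$ has no $n$-cells, this kernel is exactly $\mathrm H_{n-1}(Y_{(n-1)},\Z)$. Simple connectivity then lets me apply the Hurewicz theorem to get a $G$-equivariant isomorphism $\pi_{n-1}(Y_{(n-1)})\cong\mathrm H_{n-1}(Y_{(n-1)},\Z)$, so $\pi_{n-1}(Y_{(n-1)})$ is finitely generated as a discrete $\Z G$-module.

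I would then choose finitely many module generators and represent each by a cellular map $\varphi_i\colon S^{n-1}\to Y_{(n-1)}$. Because $S^{n-1}$ is compact, the image of $\varphi_i$ meets only finitely many cells, so its pointwise stabiliser $V_i$ is a finite intersection of compact open subgroups, hence itself compact open; thus $\varphi_i$ is $V_i$-equivariant for the trivial action on $S^{n-1}$, and attaching the $G$-orbit $G/V_i$ of $n$-cells along $\varphi_i$ keeps the complex proper and of finite type. Carrying this out for all generators yields $Y_{(n)}$, whose $(n-1)$-cycles are now all boundaries, so $\mathrm H_{n-1}(Y_{(n)},\Z)=0$ and, by Hurewicz again, $Y_{(n)}$ is $(n-1)$-connected; this restores the inductive hypothesis. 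For finite $n$ the process stops at $Y_{(n)}$, after which I attach $G$-orbits of cells in dimensions $>n$ (with compact open stabilisers, by the same finite-image device) to kill the higher homotopy groups, so that Whitehead's theorem makes the result contractible and of type $\F_n$; for $n=\infty$ the construction runs indefinitely and $Y=\bigcup_k Y_{(k)}$ is contractible and of finite type in every dimension.

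I expect the main obstacle to be the passage from finite generation of the discrete $\Z G$-module $\pi_{n-1}(Y_{(n-1)})$ to finitely many \emph{proper} orbits of attached cells, i.e. guaranteeing that the new cell stabilisers are compact, not merely open. This is precisely what the finite-image argument above is designed to handle: a sphere representative touches only finitely many cells, each with compact open stabiliser, so its pointwise stabiliser is compact open. The remaining points---exactness of the truncated complex, $G$-equivariance of the Hurewicz isomorphism, and the fact that the boundaries of the newly attached $n$-cells span the submodule generated by the chosen cycles---are then routine, and the necessity of working over $\Z$ (rather than $\Q$) enters exactly through this use of Hurewicz.
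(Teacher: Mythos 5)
Your proposal is correct and is precisely the argument the paper intends: the paper's proof consists of the single sentence ``As for abstract groups, this is a consequence of the Hurewicz theorem and Corollary~\ref{FPSchanuel}'', and your skeleton-by-skeleton construction---starting from the $2$-skeleton supplied by compact presentation, using Corollary~\ref{FPSchanuel} to get finite generation of $\ker(\delta_{n-1})$, Hurewicz to transfer this to $\pi_{n-1}$, and the compact-image device to keep the attached cell orbits proper---is the standard Wall-type argument being invoked, written out in full.
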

\begin{proof}
As for abstract groups, this is a consequence of the Hurewicz theorem and Corollary \ref{FPSchanuel}.
\end{proof}

\begin{rem}
This is the motivation for working with discrete permutation modules over $\Z$ rather than restricting to $\Q$-modules where we have enough projectives: the Hurewicz theorem allows stronger conclusions from knowing the $\Z$-homological type.
\end{rem}
Moreover, we can immediately deduce by Theorem~\ref{fpnkpn} that various t.d.l.c. groups have type $\KP_\infty$; see Example~\ref{ex:fp}. On the other hand, note that a t.d.l.c. group of type $\FP$ need not have type $\KP$: finite groups considered as discrete t.d.l.c. groups give counter-examples.
We can use Theorem \ref{fpnkpn} to obtain the following analogue of the invariance of type $\FP_n$ under commensurability for abstract groups.

Recall by Lemma \ref{GprojH} that quotient maps $f: G \to G/H$ of t.d.l.c. groups, where $H$ is a closed subgroup of $G$, are topologically split, that is, there is a continuous map of spaces $s: G/H \to G$ such that $fs$ is the identity on $G/H$.

\begin{lem}
\label{lem:cominvariant}
\begin{enumerate}[(i)]
\item If $G$ is a $k$-group and $H$ is a closed subgroup such that $G/H$ is compact and $G \to G/H$ is topologically split, $G$ has type $\KP_n$ over $\RR$ if and only if $H$ does.
\item For $G$ a t.d.l.c. group, $G$ has type $\FP_n$ over $\RR$ if and only if $H$ does, and $G$ has type $\F_n$ if and only if $H$ does.
\end{enumerate}
\end{lem}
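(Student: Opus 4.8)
The plan is to reduce everything to a single statement about $k$-$\RG$-modules: that the restriction functor $\rst=\rst^G_H$ both \emph{preserves} and \emph{reflects} type $\KP_n$. The geometric input is that $G/H$ is compact and $G\to G/H$ splits, so (exactly as in Lemma~\ref{GprojH}) a section identifies $G$ with $H\backslash G\times H$ as a left $H$-space; hence $\rst\,\RG$ is the free $k$-$\RH$-module on the \emph{compact} space $H\backslash G$, and more generally $\rst$ carries the free module on a compact space $Z$ to the free $\RH$-module on the compact space $(H\backslash G)\times Z$. Consequently $\rst$ is exact, sends compactly generated projectives to compactly generated projectives, and --- since its left adjoint $\ind^G_H$ preserves projectives --- sends CH-split maps to CH-split maps. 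The easy half of (i) is then immediate: applying $\rst$ to a type $\KP_n$ resolution of $\RR$ over $\RG$ yields a projective resolution of $\rst\,\RR=\RR$ over $\RH$ that is compactly generated through degree $n$, so $H$ has type $\KP_n$.

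The hard half of (i) is the reflection statement, and this is where I expect the main difficulty. I would prove by induction on $m$ that if $\rst\,M$ has type $\KP_m$ over $\RH$ then $M$ has type $\KP_m$ over $\RG$, for every $k$-$\RG$-module $M$; specialising to $M=\RR$ gives the implication $H$ type $\KP_n\Rightarrow G$ type $\KP_n$. The inductive step is formal: take a CH-split epimorphism $\RG[Y]\twoheadrightarrow M$ from a free module on a compact space (this is the base case $m=0$), restrict the resulting short exact sequence, apply Proposition~\ref{prop:KPn}(b) over $\RH$ to see that the kernel restricts to a type $\KP_{m-1}$ module, invoke the inductive hypothesis, and feed the sequence back through Proposition~\ref{prop:KPn}(a) over $\RG$. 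The real work is the base case $m=0$, i.e. showing $\rst\,M$ compactly generated forces $M$ compactly generated. I would take a CH-split epimorphism $p\colon\RH[Y]\to\rst\,M$ with $Y$ compact, extend $p|_Y\colon Y\to M$ $\RG$-linearly to $q\colon\RG[Y]\to M$ (surjective, since its image already contains the $\RH$-span $M$ of $p(Y)$), and verify $q$ is CH-split. To lift an arbitrary $\psi\colon\RG[Z]\to M$ from a projective (and its summands) through $q$, one restricts to $\RH$: because $\rst\,\RG[Z]$ is $\RH$-projective (being free on $(H\backslash G)\times Z$, cf. Lemma~\ref{GprojH}), the CH-splitness of $p$ supplies an $\RH$-linear lift $\ell\colon\rst\,\RG[Z]\to\RH[Y]$ of $\rst\,\psi$; then $z\mapsto\ell([z])$ is a continuous map $Z\to\RG[Y]$, defining an $\RG$-linear $\tilde\psi$, and since $q|_{\RH[Y]}=p$ by uniqueness of $\RH$-linear extensions, $q\tilde\psi$ and $\psi$ agree on generators. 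The argument hinges on the interplay between the universal property of free $k$-modules (maps out are merely continuous maps of the generating space, with no equivariance constraint) and the projectivity of $\RG$ over $\RH$, so I expect the continuity checks and the exact-structure bookkeeping in this lifting to be the most delicate point.

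For (ii), the $\FP_n$ statement follows by sandwiching part (i) between two applications of Theorem~\ref{fpnkpn}: $G$ has type $\FP_n$ iff $\RR$ has type $\KP_n$ over $\RG$ iff (part (i), whose hypotheses hold for a closed cocompact $H$ by Lemma~\ref{GprojH}) $\RR$ has type $\KP_n$ over $\RH$ iff $H$ has type $\FP_n$. For type $\F_n$ --- a ring-independent, homotopical condition --- the forward implication is geometric: restricting a contractible proper discrete $G$-CW-complex $X$ of type $\F_n$ to $H$ keeps it contractible and proper, and it remains of type $\F_n$ because each cell stabiliser $G_\sigma$ is compact open, so the $H$-orbits on a $G$-orbit $G/G_\sigma$ are indexed by the finite double-coset space $H\backslash G/G_\sigma$.

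For the converse I would not build a $G$-complex from an $H$-complex directly (the bundle $G\times_H Y$ fails to be discrete when $H$ is not open); instead, for $n\ge2$ I would combine the $\FP_n$ statement just proved with Proposition~\ref{prop:FvsFP}. Here type $\F_n$ forces compact presentation (type $\F_2$, by Proposition~\ref{prop:typeF2}), and compact presentation is invariant under passage to a cocompact overgroup by the coarse-geometric results of \cite{cdlh}; thus $H$ type $\F_n$ gives $H$, hence $G$, of type $\FP_n$ over $\Z$ and compactly presented, whence $G$ is of type $\F_n$ by Proposition~\ref{prop:FvsFP}, while the cases $n\le1$ reduce to invariance of compact generation. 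I expect this reverse $\F_n$ implication to be the second main obstacle, precisely because it resists a naive equivariant mapping-space construction and must be routed through the homological type together with compact-presentation invariance.
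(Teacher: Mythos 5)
Your proposal is correct and follows essentially the same route as the paper: restriction preserves compact-type projective resolutions because $\RG$ restricted to $\RH$ is free on the compact space $H\backslash G$, the converse goes by induction using Proposition~\ref{prop:KPn}/Schanuel together with the fact that $\RH$-compact-generation of a $k$-$\RG$-module forces $\RG$-compact-generation, and part (ii) follows from Theorem~\ref{fpnkpn} plus the invariance of compact presentability under cocompact closed subgroups. The only real difference is that you spell out the lifting argument for the base case (that a CH-split $\RH[Y]\twoheadrightarrow \rst M$ extends to a CH-split $\RG[Y]\twoheadrightarrow M$), which the paper asserts in one clause without proof; your verification of it is sound.
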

\begin{proof}
$\RG$ is free as a $k$-$\RH$-module $\RR[H \times G/H]$. So a compact type partial projective resolution of length $n$ for $G$ is also one under restriction to $H$. If $H$ has type $\KP_n$ and hence (by induction) $G$ has type $\KP_{n-1}$, take a partial projective resolution of length $n-1$ for $G$: the kernel of the $(n-1)$th map is compactly generated as an $\RH$-module by Schanuel's lemma, so it is compactly generated as an $\RG$-module.

Thanks to Theorem \ref{fpnkpn}, the type $\FP_n$ part of (ii) follows immediately. The type $\F_n$ part follows because $G$ is compactly presented if and only if $H$ is by \cite[Corollary 8.A.5]{cdlh}. (`Cocompact' in the statement of \cite[Corollary 8.A.5]{cdlh} is equivalent to $G/H$ being compact by \cite[Lemma 2.C.9]{cdlh}.)
\end{proof}

In particular this holds when $H$ has finite index in $G$.

\begin{rem}
Part (ii) of this lemma can be also deduced from Corollary~\ref{cor:qi}.
\end{rem}

\begin{lem}
\label{lem:fpinduction}
If $H$ is a closed subgroup of a $k$-group $G$ and $A$ is a $k$-$R[H]$-module, $A$ has type $\KP_n$ over $\RR$ if and only if $\ind^G_HA = R[G] \otimes_H A$ has type $\KP_n$ over $\RR$.
\end{lem}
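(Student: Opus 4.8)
The functor that controls everything is induction $\ind^G_H = \RG \otimes_{\RH} \argu$, viewed as a functor from $k$-$\RH$-modules to $\RGtop$, and the plan is to record its formal properties and then run an induction on $n$ in both directions, reducing to the case $n=0$. First I would establish three facts. (1) $\ind^G_H$ is exact for the compact Hausdorff structure: this is because $\RG$ is free, hence flat, as a right $k$-$\RH$-module (by the argument of Lemma~\ref{GprojH}, using a continuous section of $G \to G/H$), so tensoring preserves CH-split short exact sequences. (2) $\ind^G_H$ sends the free module $\RH[X]$ on a compact Hausdorff space $X$ to the free module $\RG[X]$, and sends summands to summands; hence it carries compactly generated (projective) modules to compactly generated (projective) modules, and CH-split epimorphisms to CH-split epimorphisms. (3) $\ind^G_H$ is left adjoint to restriction $\rst^G_H$, and since $\RG$ is faithfully flat over $\RH$ it is faithful and exact, so $\ind^G_H M = 0$ forces $M=0$.

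For the forward direction, if $A$ has type $\KP_n$ choose a projective resolution $P \to A$ of $k$-$\RH$-modules with $P_i$ compactly generated for $i \le n$. Applying the exact functor $\ind^G_H$ produces, by (1) and (2), a projective resolution $\ind^G_H P \to \ind^G_H A$ in $\RGtop$ with $\ind^G_H P_i$ compactly generated for $i \le n$, so $\ind^G_H A$ has type $\KP_n$.

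The reverse direction I would argue by induction on $n$, with the case $n=0$ as the crux (treated below). For the inductive step, suppose $\ind^G_H A$ has type $\KP_n$ with $n \ge 1$; then it has type $\KP_0$, so by the base case $A$ has type $\KP_0$, and we may pick a CH-split epimorphism $\RH[X] \to A$ with $X$ compact Hausdorff and kernel $B$. Applying $\ind^G_H$ yields a CH-split short exact sequence $0 \to \ind^G_H B \to \RG[X] \to \ind^G_H A \to 0$ in which $\RG[X]$ has type $\KP_\infty$. By Proposition~\ref{prop:KPn}(b), $\ind^G_H B$ has type $\KP_{n-1}$, so by the inductive hypothesis $B$ has type $\KP_{n-1}$; feeding this into the sequence $0 \to B \to \RH[X] \to A \to 0$ and invoking Proposition~\ref{prop:KPn}(a) shows that $A$ has type $\KP_n$. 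Note that this step uses only the properties of $\ind^G_H$ and never restriction, which does not preserve compact generation once $G/H$ is non-compact.

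The heart of the matter, and the main obstacle, is the base case: if $\ind^G_H A$ is compactly generated then so is $A$. This is the topological analogue of the elementary fact that finitely many $\RG$-generators of $\RG \otimes_{\RH} A$ involve only finitely many elements of $A$. I would write $A$ as the filtered colimit $A = \colim_\lambda A_\lambda$ of its compactly generated submodules; since $\ind^G_H$ commutes with colimits, $\ind^G_H A = \colim_\lambda \ind^G_H A_\lambda$. Given a CH-split epimorphism $\RG[X] \to \ind^G_H A$ with $X$ compact Hausdorff, the compact image of $X$ must lie in a single stage $\ind^G_H A_\lambda$; as this image $\RG$-generates $\ind^G_H A$, we get $\ind^G_H A_\lambda = \ind^G_H A$, whence $A_\lambda = A$ by the faithful exactness in (3), so that $A$ is compactly generated. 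I expect the delicate point to be precisely the claim that the compact set $X$ sits inside one stage of the colimit: this is where the $k$-space framework of \cite{Ged} is essential (compact subsets of filtered colimits along admissible monomorphisms of $k$-modules are contained in a single term), and one must additionally check that the resulting generating map for $A$ can be taken CH-split rather than merely surjective.
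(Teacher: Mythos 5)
Your overall architecture -- the ``only if'' direction via exactness of $\ind^G_H$ on CH-split sequences, and the ``if'' direction by induction on $n$ with everything riding on the base case $n=0$ -- is exactly the paper's. The forward direction and the inductive step are fine. The problem is the base case, where you replace the paper's argument with a filtered-colimit argument that rests on an unjustified first step: the claim that $A$ is the filtered colimit of its compactly generated submodules. For abstract modules the analogue (every module is the directed union of its finitely generated submodules) is trivial, but in $\RGtop$ ``compactly generated'' is a structured notion: it requires a \emph{CH-split} epimorphism from a free module on a compact Hausdorff space. Given a compact subset $K$ of $A$, the submodule it generates is the image of $\RR[H\times K]\to A$, and there is no reason for this surjection to be CH-split (the paper itself warns that only surjections onto \emph{discrete} modules are automatically CH-split), nor for that image to be a closed (admissible) submodule. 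So it is not clear that the proposed colimit system exists, that its transition maps are admissible monomorphisms, or that its colimit recovers the topology of $A$ -- and without closed inclusions the ``compact subsets land in one stage'' principle you invoke is unavailable. You flag two of the delicate points yourself but leave them, together with this more basic one, unresolved; as written the base case does not go through.

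The paper's base case sidesteps all of this by never forming a colimit of submodules of $A$. Instead it uses the continuous section $s\colon G/H\to G$ from Lemma \ref{GprojH} to identify $\ind^G_HA\cong\RR[G/H]\otimes_{\RR}A$, and then exploits the \emph{explicit} topological presentation of this tensor product (from the construction in \cite[Section 7]{Ged}) as the colimit of the closed subspaces $S_n$ of elements expressible as sums of at most $n$ pure tensors $x\otimes a$ with $x\in s(G/H)$. This is a concrete colimit along closed inclusions, so the compact space $X$ (identified with its image) lies in some $S_N$; one then reads off the ``$A$-coordinates'' of the elements of $X$ to produce a compact subset $Y\subseteq A$ that generates $A$, and verifies CH-splitness of $\RR[H\times Y]\to A$ directly by factoring the original CH-split map $f$ through $\RR[G\times Y]=\ind^G_H\RR[H\times Y]\to\ind^G_HA$ and restricting the resulting continuous sections. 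If you want to salvage your version, you would essentially have to reprove these facts about the tensor-product topology for your colimit system, at which point you may as well argue as the paper does.
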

\begin{proof}
Induction is exact because $\RG$ is free as a $k$-$\RH$-module, so the `only if' part is clear. In the other direction, it suffices to prove the result when $n=0$; the rest follows by induction on $n$.

So suppose $\ind^G_HA$ is compactly generated, that is, we have a CH-split map $f: \RR[G \times X] \to \RG \otimes_{\RH} A$ with $X$ a compact Hausdorff space. We may identify $X$ with its image in $\ind^G_HA$: indeed, if $X$ is not homeomorphic to $f(X)$, the induced map $\RR[G \times f(X)] \to \ind^G_HA$ is CH-split too, so we replace $X$ with $f(X)$. Thanks to Lemma \ref{GprojH}, $\RG \otimes_{\RH} A \cong \RR[G/H] \otimes_{\RR} A$. Choose a continuous section $s: G/H \to G$, so that every `pure' element of $\RR[G/H] \otimes_R A$, of the form $x \otimes a$, can be written as $x' \otimes a'$ with $x' \in s(G/H)$. Recall from the construction of tensor products of $k$-modules in \cite[Section 7]{Ged} that, topologically, $\RR[G/H] \otimes_R A$ is the colimit of the sequence of closed subspaces $S_n$ consisting of elements that can be expressed as a sum of at most $n$ terms of the form $x \otimes a$, $x \in s(G/H), a \in A$. Hence any compact subspace of $\RR[G/H] \otimes_{\RR} A$ must be contained in some $S_N$; in particular this holds for $X$. Now define the compact subset $Y_n$ of $A$ by letting $y \in Y_n$ if there is some $x \in X$ with $x = \sum_{i=1}^n x_i \otimes a_i, x_i \in s(G/H), a_i \in A$, and $y=a_i$ for some $i$. Lastly, set $Y = \bigcup_{n=1}^N Y_n$, with the subspace topology from $A$, and this is once again compact.

It is easy to check that $Y$ generates $A$ as an abstract module. To see that $\RR[H \times Y] \to A$ is CH-split, observe that we have a commutative diagram of $\RH$-modules
\[\xymatrix{
\RR[H \times Y] \ar[r] \ar[d] & \RR[G \times Y] \ar[d] \\
A \ar[r] & \ind^G_HA. \\
}\]
From the construction of $Y$, we see that $f$ factors through $$\RR[G \times Y] = \ind^G_H\RR[H \times Y] \to \ind^G_HA,$$ so this map is CH-split. Finally, observe, from the construction, that for a compact subspace of $\ind^G_HA$ contained in $A$, a continuous section may be chosen whose image is contained in $R[H \times Y]$, as required.
\end{proof}

\begin{cor}
\label{cor:fpinduction}
If $H$ is a closed subgroup of a t.d.l.c. group $G$ and $M$ is a discrete $\RH$-module, $M$ has type $\FP_n$ over $\RR$ if and only if $\ind^G_HM = \RG \otimes_H M$ has type $\FP_n$ over $\RR$.
\end{cor}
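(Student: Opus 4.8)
The plan is to read this off from its topological counterpart, Lemma~\ref{lem:fpinduction}, using Theorem~\ref{fpnkpn} as a bridge between the discrete world of $\FP_n$ and the $k$-module world of $\KP_n$ at both ends. Explicitly, I would argue through the chain of equivalences: $M$ has type $\FP_n$ over $\RR$ as a discrete $\RH$-module if and only if it has type $\KP_n$ over $\RR$ as a $k$-$\RH$-module (Theorem~\ref{fpnkpn} applied to the t.d.l.c. group $H$); if and only if $\ind^G_H M$ has type $\KP_n$ over $\RR$ (Lemma~\ref{lem:fpinduction}, which applies since a discrete $\RH$-module is in particular a $k$-$\RH$-module); if and only if $\ind^G_H M$ has type $\FP_n$ over $\RR$ (Theorem~\ref{fpnkpn} applied to $G$). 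Chaining these gives the corollary at once, so the entire content is already carried by the two cited results.

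The step that needs genuine care --- and which I expect to be the only obstacle --- is the final appeal to Theorem~\ref{fpnkpn}: that theorem is a statement about discrete $\RG$-modules, so I must first check that $\ind^G_H M = \RG\otimes_H M$ really is an object of $\RGdis$ rather than merely a $k$-$\RG$-module. I would verify this using Lemma~\ref{GprojH}: a continuous section of $G\to G/H$ presents $\RG$ as the free right $k$-$\RH$-module on the space $G/H$, whence $\ind^G_H M\cong \RR[G/H]\otimes_\RR M$ as $k$-$\RR$-modules, with $G$ acting in the evident way. A pure tensor $gH\otimes m$ then has stabiliser $g\,\stab_H(m)\,g^{-1}$, so a general (finite-sum) element has stabiliser containing a finite intersection of conjugates of the open subgroups $\stab_H(m)\leq H$; when $H$ is open in $G$ these intersections are open in $G$, so $\ind^G_H M$ lies in $\RGdis$ and Theorem~\ref{fpnkpn} applies as stated. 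Pinning down this discreteness is precisely where the topology, rather than pure algebra, enters.

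I would present the argument in precisely this order --- reduce each side to $\KP_n$ first, then invoke Lemma~\ref{lem:fpinduction} in the middle --- because it displays the corollary transparently as the discrete shadow of the topological induction lemma, with the bridge theorem absorbing all the work of passing between the abelian category $\RGdis$ and the quasi-abelian $\RGtop$. No quantitative estimates or explicit resolutions are required beyond those already built into Lemma~\ref{lem:fpinduction}.
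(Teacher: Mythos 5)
Your chain of equivalences ($\FP_n$ for $M$ over $\RH$ $\Leftrightarrow$ $\KP_n$ over $\RH$ $\Leftrightarrow$ $\KP_n$ for $\ind^G_HM$ over $\RG$ $\Leftrightarrow$ $\FP_n$ over $\RG$, via Theorem~\ref{fpnkpn} at both ends and Lemma~\ref{lem:fpinduction} in the middle) is exactly how the paper intends the corollary to be read; it offers no separate proof, so your argument is the same as the implicit one. One caveat: your discreteness check for $\ind^G_HM$ only goes through when $H$ is \emph{open} in $G$, whereas the corollary is stated for $H$ closed; for a closed non-open $H$ (e.g.\ an infinite discrete subgroup with $M=\RR$) the stabiliser of $g\otimes m$ is $g\,\stab_H(m)\,g^{-1}$, which is open in $H$ but not in $G$, so $\ind^G_HM$ need not lie in $\RGdis$ and the phrase ``type $\FP_n$'' for it only makes sense via the $\KP_n$ interpretation. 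This is a defect of the statement rather than of your proof --- every application in the paper inducts from compact open stabilisers --- but you should either restrict to $H$ open or read the right-hand condition as type $\KP_n$ in $\RGtop$.
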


\begin{thm}
\label{thm:LHS}
Suppose $G$ is a t.d.l.c. group and $N$ is a normal closed subgroup. Suppose $N$ has type $\FP_m$ over $\RR$. Then if $Q=G/H$ has type $\FP_n$, $G$ has type $\FP_{\min(m,n)}$, and if $G$ has type $\FP_n$, $Q$ has type $\FP_{\min(m+1,n)}$ over $\RR$. The result also holds if we replace type $\FP_\bullet$ with type $\F_\bullet$.
\end{thm}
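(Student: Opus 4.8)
The plan is to treat the two $\FP_\bullet$ assertions separately and then bootstrap the $\F_\bullet$ statement from them. I work throughout in $\RGtop$ and use the equivalence of type $\FP_n$ with type $\KP_n$ (Theorem~\ref{fpnkpn}), which lets me build and compare projective resolutions by compactly generated modules over an arbitrary $\RR$. The standing inputs are that induction $\ind_N^G$ is exact and preserves type $\KP_n$ (Lemma~\ref{lem:fpinduction}, Corollary~\ref{cor:fpinduction}), that $\RG$ is free over $\RR[N]$ (Lemma~\ref{GprojH}), and the Schanuel-type statement of Corollary~\ref{FPSchanuel}. Note $\ind_N^G\RR\cong\RR[G/N]$ identifies an inflated $Q$-module with an induced trivial $N$-module, which is the mechanism for transporting resolutions across the extension.

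First direction: if $N$ has type $\FP_m$ and $Q$ has type $\FP_n$, then $G$ has type $\FP_{\min(m,n)}$. I would argue by an explicit resolution. Take a projective resolution $F_\bullet\to\RR$ over $\RR[N]$ with $F_q$ compactly generated for $q\le m$, and a projective resolution $\bar P_\bullet\to\RR$ over $\RR[Q]$ with $\bar P_p$ compactly generated for $p\le n$. Inflating $\bar P_\bullet$ along $G\to Q$ presents $\RR$ as resolved by $\RR G$-modules, each of which is an induced module $\ind_N^G(\argu)$ of a trivial $N$-module; resolving each such term by the induced complex $\ind_N^G(F_\bullet\otimes_\RR\argu)$ produces a first-quadrant double complex of $\RR G$-modules whose total complex is a projective resolution of $\RR$ over $\RG$. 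In bidegree $(p,q)$ the term is compactly generated whenever $p\le n$ and $q\le m$, since $\ind_N^G$ preserves compact generation; hence for $p+q\le\min(m,n)$ every summand of the total complex is compactly generated, giving $G$ type $\KP_{\min(m,n)}$ and so type $\FP_{\min(m,n)}$.

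Second direction: if $G$ has type $\FP_n$ and $N$ has type $\FP_m$, then $Q$ has type $\FP_{\min(m+1,n)}$. This is the part I expect to be the main obstacle, because one must pass to the quotient and inflation does not send proper permutation modules to proper ones. Over $\Q$ I would run the Lyndon--Hochschild--Serre spectral sequence $E^2_{p,q}=\dH_p(Q,\dH_q(N,M))\Rightarrow\dH_{p+q}(G,M)$ afforded by the $\Q[G/N]$-module structure on $\dTor^N_\bullet$ recalled above, applied with $M=\prod_\lambda\RG$. Since $N$ has type $\FP_m$, for $0<q\le m$ one has $\dH_q(N,\prod_\lambda\RG)\cong\prod_\lambda\dH_q(N,\RG)=0$ because $\RG$ is $\RR[N]$-free, so the rows $1\le q\le m$ vanish; an inspection of differentials then shows $E^2_{p,0}=\dH_p(Q,\prod_\lambda\RR[Q])$ survives to $E^\infty$ for $p\le\min(m+1,n)$, every incoming or outgoing differential landing either in a vanishing row ($q\le m$) or in negative horizontal degree. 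As $E^\infty_{p,0}$ is a subquotient of $\dH_p(G,\prod_\lambda\RG)=0$ for $0<p\le n$, we get $\dH_p(Q,\prod_\lambda\RR[Q])=0$ for $0<p\le\min(m+1,n)$, and Bieri's criterion (Theorem~\ref{thm:Bieri}), together with compact generation of $Q$ as a continuous image of the compactly generated $G$, yields type $\FP_{\min(m+1,n)}$. For general $\RR$ one replaces this with the equivalent topological argument: the same double complex, now built from a partial $Q$-resolution and compared against a $\KP_n$-resolution of $G$ by induction on $\min(m+1,n)$ using Corollary~\ref{FPSchanuel}, the vanishing of the low $N$-homology being exactly what forces the comparison of syzygies to be compactly generated.

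Finally, the $\F_\bullet$ statement. When the relevant minimum is $0$ the assertion is automatic, and when it is $1$ it reduces to compact generation of the group in question (a continuous image, resp.\ an extension, of compactly generated groups). When the minimum is $\ge2$ the groups involved are compactly presented: in the first direction $G$ is an extension of the compactly presented $Q$ by the compactly presented $N$, and in the second $Q$ is the quotient of the compactly presented $G$ by the compactly generated normal closed subgroup $N$, so both are compactly presented by the standard permanence properties of compact presentability (cf.\ \cite{cdlh}). Having already established the corresponding $\FP_\bullet$ conclusion over $\Z$, Proposition~\ref{prop:FvsFP} then upgrades it to the desired $\F_\bullet$ conclusion.
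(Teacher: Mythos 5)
Your first direction (extension: $N$ of type $\FP_m$ and $Q$ of type $\FP_n$ give $G$ of type $\FP_{\min(m,n)}$) is a workable alternative to the paper's argument. The paper instead proves a more general statement about an arbitrary $k$-$\RR[Q]$-module $A$ by a dimension-shifting induction, using Proposition~\ref{prop:KPn} together with the key observation that a compactly generated projective $\RR[Q]$-module has type $\KP_m$ as a $k$-$\RG$-module (via Lemma~\ref{lem:fpinduction} applied to $\RR[G/N]=\ind_N^G\RR$). Your tensor-product-of-resolutions construction reaches the same bound, provided you also check that the total complex is admissibly exact (i.e.\ the relevant maps are CH-split) in the exact structure on $\RGtop$; this is routine but must be said. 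The $\F_\bullet$ paragraph coincides with the paper's.

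The second direction is where the proposal breaks down. Over $\Q$, the coefficient module $\prod_\lambda\RG$ is not an object of $\QGdis$ (the stabiliser of a group element under left translation is trivial, not open), so neither $\dH_q(N,\prod_\lambda\RG)$ nor the spectral sequence you invoke is defined in the category where Bieri's criterion lives; the correct coefficients are $\dd\prod\biB(G)$. More seriously, the step ``$\dH_q(N,\prod_\lambda M)\cong\prod_\lambda\dH_q(N,M)$ because $M$ is $\RR[N]$-free'' is unjustified: commuting $\dTor^N_q$ with products is precisely the content of Bieri's criterion and is what the hypothesis that $N$ has type $\FP_m$ buys you --- and it buys an isomorphism only for $q\le m-1$ (with merely an epimorphism at $q=m$), so your claimed vanishing of the rows $1\le q\le m$ is off by one; one must also reconcile $\dd_G$-products with $\dd_N$-products and show that $N$-coinvariants carry $\dd\prod\biB(G)$ to $\dd\prod\biB(Q)$, none of which is addressed. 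Most importantly, the theorem is asserted over an arbitrary $\RR$, where the paper's Bieri criterion is unavailable (the paper says so explicitly in the discussion preceding Theorem~\ref{thm:Brown1}), and your closing sentence gesturing at ``the equivalent topological argument'' is not a proof --- that case is the actual content of the statement. The paper's route avoids all of this: take a CH-split short exact sequence $0\to K\to P\to A\to 0$ of $k$-$\RR[Q]$-modules with $P$ compactly generated projective, note that $P$ has type $\KP_m$ over $\RG$, and run a short induction with Proposition~\ref{prop:KPn}; this handles both directions over any $\RR$ at once and is what produces the $+1$ improvement recorded in Remark~\ref{rem:improve}. You should replace your spectral-sequence paragraph with that syzygy argument (or an equivalent one carried out entirely in $\RGtop$).
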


We prove a more general result, with the theorem as a special case. Let $G$ be a $k$-group, $N$ closed and normal, and $Q=G/N$. Suppose the quotient map $G \to Q$ has a continuous section (recall this is always the case for t.d.l.c. groups). Suppose $N$ has type $\FP_m$ over $\RR$. Let $A$ be a $k$-$\RR[Q]$-module, which we think of also as a $k$-$\RG$-module by restriction.

\begin{prop}
\begin{enumerate}[(i)]
\item If $A$ has type $\KP_n$ over $\RG$, it has type $\KP_{\min(m+1,n)}$ over $\RR[Q]$.
\item If $A$ has type $\KP_n$ over $\RR[Q]$, it has type $\KP_{\min(m,n)}$ over $\RG$.
\end{enumerate}
\end{prop}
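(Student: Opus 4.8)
The plan is to prove both statements by induction on $n$, using the short exact sequence results of Proposition~\ref{prop:KPn} as the main engine and feeding in the hypothesis on $N$ through a single preparatory observation. That observation is: the restriction of $\RR[Q]$ to an $\RG$-module has type $\KP_m$ over $\RG$, and more generally every free $\RR[Q]$-module $\RR[Q\times X]$ on a compact Hausdorff space $X$ has type $\KP_m$ over $\RG$. To see this, note that $N$ being of type $\FP_m$ means (by Theorem~\ref{fpnkpn}) that the trivial module $\RR$ has type $\KP_m$ over $\RR[N]$; tensoring a witnessing resolution over $\RR$ with the flat module $\RR[X]$ shows the trivial $\RR[N]$-module $\RR[X]$ also has type $\KP_m$ over $\RR[N]$. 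Since $\RR[Q\times X]\cong\ind_N^G(\RR[X])$ as $\RG$-modules, Lemma~\ref{lem:fpinduction} (whose hypotheses hold because the continuous section of $G\to Q$ makes $\RG$ free over $\RR[N]$ by Lemma~\ref{GprojH}) gives the claim. I will also record the two elementary base cases: a module that is $\KP_0$ over $\RG$ is automatically $\KP_0$ over $\RR[Q]$ (project a compact generating set along $G\to Q$), and conversely a $\KP_0$ $\RR[Q]$-module is $\KP_0$ over $\RG$ (lift a compact generating set along the continuous section of $G\to Q$).

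For part (ii) I choose a CH-split surjection $F=\RR[Q\times X]\to A$ of $\RR[Q]$-modules with $X$ compact, and let $A'$ be its kernel. Proposition~\ref{prop:KPn}(b) over $\RR[Q]$ gives that $A'$ has type $\KP_{n-1}$ over $\RR[Q]$, so by the inductive hypothesis $A'$ has type $\KP_{\min(m,n-1)}$ over $\RG$. Viewing the same sequence $0\to A'\to F\to A\to0$ over $\RG$ and using that $F$ has type $\KP_m$ over $\RG$, Proposition~\ref{prop:KPn}(a) with index $\min(m,n)$ yields that $A$ has type $\KP_{\min(m,n)}$ over $\RG$; the only thing to check is the numerical inequality $\min(m,n-1)\ge\min(m,n)-1$, which holds because $m$ and $n$ are integers.

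Part (i) is dual. After reducing to the case that $A$ is $\KP_0$ over $\RR[Q]$ (base case above), I pick a CH-split surjection $F=\RR[Q\times X]\to A$ over $\RR[Q]$ with kernel $B$, a $\RR[Q]$-module. Over $\RG$ the sequence $0\to B\to F\to A\to0$ has middle term of type $\KP_m$ and right term of type $\KP_n$, so Proposition~\ref{prop:KPn}(b) with index $k:=\min(m+1,n)$ gives $B$ of type $\KP_{k-1}$ over $\RG$. Since $k-1<n$, the inductive hypothesis applies to $B$ and yields type $\KP_{\min(m+1,k-1)}=\KP_{k-1}$ over $\RR[Q]$ (using $k-1\le m$); feeding this into Proposition~\ref{prop:KPn}(a) over $\RR[Q]$ for the same sequence produces $A$ of type $\KP_k$ over $\RR[Q]$, as desired.

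The main obstacle I anticipate is the bookkeeping of admissibility rather than of indices: Proposition~\ref{prop:KPn} applies to short exact sequences that are admissible in the relevant compact Hausdorff exact structure, and I use each sequence $0\to B\to F\to A\to0$ both over $\RR[Q]$ and over $\RG$. I must therefore check that a CH-split sequence of $\RR[Q]$-modules remains CH-split after restriction of scalars along $\RG\to\RR[Q]$. This should follow because CH-splitness is detected by the lifting of maps out of compact Hausdorff spaces, a condition on the underlying $k$-spaces that is unaffected by restriction of scalars; making this precise, together with verifying that the tensor and induction constructions in the preparatory step respect the exact structure of $\RGtop$, is where the care is needed.
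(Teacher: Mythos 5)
Your proof is correct and follows essentially the same route as the paper's: a base case transferring compact generation across $\RG\to\RR[Q]$, a short exact sequence with a compactly generated free $\RR[Q]$-module in the middle that has type $\KP_m$ over $\RG$ by inducing the $\KP_m$ property of the trivial module up from $N$ via Lemma~\ref{lem:fpinduction}, and then dimension-shifting with Proposition~\ref{prop:KPn}; even the index bookkeeping matches. The only differences are that you spell out two points the paper leaves implicit (that $\RR[Q\times X]$, not just $\RR[G/N]$, has type $\KP_m$ over $\RG$, and that CH-splitness is preserved under restriction of scalars because it is detected on underlying $k$-spaces), both of which you resolve correctly.
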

\begin{proof}
First note that $A$ is compactly generated as an $\RG$-module if and only if it is compactly generated as an $\RR[Q]$-module: this follows from the canonical map $\RG \to \RR[Q]$ being $CH$-split. If $A$ is not compactly generated we are done; assume it is. Take a short exact sequence 
\begin{equation}\label{eq:ses}
0 \to K \to P \twoheadrightarrow^{CH} A\to 0,
\end{equation}
 with $P$ a compactly generated projective $\RR[Q]$-module. Now $P$ has type $\KP_m$ as $k$-$\RG$-module: indeed, $\RR[G/N]$ has type $\KP_m$ over $\RR$ by Lemma~\ref{lem:fpinduction}, and the rest follows easily.
\begin{enumerate}[(i)]
\item Use induction on $n$. When $n=0$ we are done. The $\RR[Q]$-module $K$ has type $\KP_{\text{min}(m,n-1)}$ over $\RG$ by Proposition \ref{prop:KPn} so by hypothesis it has type $\KP_{\text{min}(m+1,\text{min}(m,n-1))} = \KP_{\text{min}(m,n-1)}$ over $\RR[Q]$. Therefore as an $\RR[Q]$-module $A$ has type $\KP_{\text{min}(m,n-1)+1} = \KP_{\text{min}(m+1,n)}$ over $\RR[Q]$ by \eqref{eq:ses}.
\item Use induction on $n$. When $n=0$ we are done. $K$ has type $\KP_{n-1}$ over $\RR[Q]$ by Proposition \ref{prop:KPn} so by hypothesis it has type $\KP_{\text{min}(m,n-1)}$ over $\RG$. Since $P$ has type $\KP_m$ over $\RG$, by Proposition~\ref{prop:KPn}, $A$ has type $\KP_{\text{min}(m,n)}$ over $\RG$.
\end{enumerate}
\end{proof}

Since compactly presented t.d.l.c. groups are closed under extensions, and under quotients by compactly generated subgroups, by \cite[Proposition 8.A.10]{cdlh}, the result also holds for type $\F_\bullet$.

\begin{rem}\label{rem:improve}
Other sources in the literature, when $K$ has type $\FP_m$ and $G$ has type $\FP_n$, say only that $G/K$ has type $\FP_{\min(m,n)}$, so we improve this by $1$. (The other sources are speaking of abstract groups, but our proof applies there without change.) Note that this new statement fits much more comfortably alongside analogous results about finite presentability.
\end{rem}

As for type $\FP_n$ and type $\KP_n$, we can compare the type $\F_n$ and type $\K_n$ conditions for a t.d.l.c. group $G$.
\begin{thm}\label{thm:fnkn}
If $G$ has type $\F_n$, it has type $\K_n$.
\end{thm}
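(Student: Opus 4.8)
The plan is to show that the very complex witnessing type $\F_n$ already witnesses type $\K_n$: if $X$ is a contractible proper discrete $G$-CW-complex whose $n$-skeleton has finitely many $G$-orbits, then $X$, regarded as a $G$-$k$-space, is a contractible cofibrant object of the compact Hausdorff model structure on $G$-$k$-spaces with cocompact $n$-skeleton. So the three things to check are cofibrancy, cocompactness of the $n$-skeleton, and that the hypothesised topological contractibility of $X$ is the correct notion of weak contractibility for this model structure.

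First I would verify cofibrancy by reading off the cell structure from the defining pushouts (D1)--(D4). Each attaching diagram (D3) builds $X_k$ from $X_{k-1}$ by a pushout along $S^{k-1}\times\Lambda_k \to B^k\times\Lambda_k$, where $\Lambda_k$ is a discrete $\eu F$-space, and properness ($\eu F\subseteq\CO(G)$) forces every point-stabiliser to be a compact open subgroup. Decomposing $\Lambda_k$ into $G$-orbits, each orbit is of the form $G/U$ with $U$ compact open, so the attaching map is a (disjoint union of) the generating cofibration $G/U\times S^{k-1}\hookrightarrow G/U\times B^k$. The key compatibility to spell out is that $G/U$, being discrete, is a disjoint union of (singleton) compact Hausdorff spaces, hence is exactly the type of free $G$-cell permitted in the compact Hausdorff structure (this is the same observation underlying Lemma~\ref{GprojH} and Fact~\ref{fact:cchain}). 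The skeletal filtration $(X_k)_{k\geq 0}$ then exhibits $X$ as a cell complex assembled from these generating cofibrations, so $X$ is cofibrant.

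Next, cocompactness of the $n$-skeleton is immediate from the definition of type $\F_n$: the $n$-skeleton $X_n$ has finitely many $G$-orbits of cells, equivalently each $\Lambda_k$ for $k\leq n$ has finitely many $G$-orbits, which is precisely the cocompactness condition entering the definition of type $\K_n$. Finally, $X$ is contractible by hypothesis; I would match this with the model-structure notion by invoking that the weak equivalences of the compact Hausdorff model structure on $G$-$k$-spaces (as constructed in \cite{Ged}) are detected on the underlying $k$-space, so that topological contractibility of $X$ is exactly the statement that $X\to *$ is a weak equivalence. Assembling these, $X$ is a contractible cofibrant $G$-$k$-space with cocompact $n$-skeleton, and therefore $G$ has type $\K_n$.

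The main obstacle I anticipate is not the homotopy-theoretic content but the bookkeeping of translating between the two cellular frameworks: one must confirm that the generating cofibrations of the compact Hausdorff model structure in \cite{Ged} are (up to the usual retract and pushout closure) precisely the orbit cells $G/U\times S^{k-1}\hookrightarrow G/U\times B^k$ with $U$ compact open, and that the $G$-CW attaching data of a proper discrete $G$-CW-complex satisfies the closure condition (D4) in a way compatible with the $k$-space topology so that the cellular filtration is genuinely a model-categorical cellular presentation. Once this identification and the underlying-weak-equivalence characterisation are in place, the argument is formal.
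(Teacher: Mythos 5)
The central step of your argument --- that the proper discrete $G$-CW-complex $X$ witnessing type $\F_n$ is \emph{itself} cofibrant in the compact Hausdorff model structure --- is exactly the point that fails, and it is where the content of the theorem lies. The generating cofibrations of that model structure are the free cells $G\times C\times S^{k-1}\hookrightarrow G\times C\times B^{k}$ with $C$ compact Hausdorff, mirroring the fact that the projectives of the compact Hausdorff structure on $\RGtop$ are summands of free modules $\RR[G\times C]$; they are not the orbit cells $G/U\times S^{k-1}\hookrightarrow G/U\times B^{k}$. Your observation that $G/U$ is a disjoint union of singleton compact Hausdorff spaces describes its underlying space but not its $G$-structure: as a $G$-space, $G/U$ is transitive with stabiliser $U$, hence not free, and it is not of the form $G\times C$ unless $U=1$. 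The linearised version of your claim is explicitly refuted in the paper: the cellular chain complex of $X$ consists of proper discrete permutation modules $\RR[G/U]$, which fail to be projective in $\RGtop$ already for $\RR=\Z$ (one cannot average a lift over $U$ without dividing by indices) --- this is the stated motivation for introducing $\KP_n$ at all. If the orbit cells were generating cofibrations, type $\K_n$ would coincide with type $\F_n$ essentially by definition, Theorem~\ref{fpnkpn} would be a tautology, and the question following Theorem~\ref{thm:fnkn} would not be open. Your appeal to Lemma~\ref{GprojH} is also misplaced: that lemma says $\RG$ is projective over $\RH$ under \emph{restriction} to a closed subgroup, not that the induced module $\RR[G/U]$ is projective over $\RG$.

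What the paper actually does is build a genuine cofibrant replacement of $X$, following the argument of \cite[Lemma 4.1]{Luck} with ``finite'' replaced by ``compact'': each orbit cell $G/U\times B^{k}$ is replaced by an induced complex $G\times_{U}E_{U}$, where $E_{U}$ is a contractible cofibrant $U$-$k$-space with cocompact skeleta, and such an $E_{U}$ exists precisely because compact groups have type $\K_\infty$ (e.g.\ via the bar resolution). Cocompactness of the $n$-skeleton survives this replacement since there are finitely many orbits of cells in each dimension $\leq n$ and each $E_{U}$ has cocompact skeleta. Your remarks about cocompactness and about weak equivalences being detected on underlying $k$-spaces are fine as far as they go, but without the cofibrant replacement step the proof does not go through.
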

\begin{proof}
If $G$ has type $\F_n$, we can copy the argument of \cite[Lemma 4.1]{Luck}, replacing ``finite'' with ``compact'' in the appropriate places and recalling that compact groups have type $\K_\infty$. Given a contractible $(G,\eu C)$-CW-complex $X$ with finite $n$-skeleton, this procedure is essentially taking a cofibrant replacment of $X$ in the compact-open model structure on the category of $G$-KW-complexes (see \cite{Ged}).
\end{proof}

\begin{ques}
Is the converse true?
\end{ques}

\begin{rem}
[Type $\FP_n$ vs Type $\textup{CP}_n$] In \cite{AT} Abels and Tiemeyer introduced compactness properties $\textup{C}_n$ and $\textup{CP}_n$ for arbitrary locally compact groups, which generalise finiteness properties for discrete groups. Even though the starting point of their definition is (the discrete versions of) Brown's criteria we do not know at this stage how their compactness properties relate to our finiteness properties over the class of t.d.l.c. groups. Nevertheless, a relation of their property $\textup{C}_\infty$ to the above $\F_\infty$-property has been promised in \cite{sauert} to be discussed in forthcoming work of the authors.
\end{rem}

\subsection{Almost compactly presented t.d.l.c. groups} 
Let $((\ca A,\Lambda),\phi)$ be a generalised presentation of $G$. Notice that the kernel $N=\ker(\phi)$ is a discrete free subgroup of $ \pi_1(\ca A,\Lambda)$. Indeed $N$ is discrete because it intersects trivially the vertex groups $A_v$ of $(\ca A,\Lambda)$ - which are open in $ \pi_1(\ca A,\Lambda)$ - and, furthermore, $N$ if free since it acts freely on the Bass-Serre tree of $ \pi_1(\ca A,\Lambda)$. Moreover, if $N$ is both normal and discrete, the centraliser of $x$ in $ \pi_1(\ca A,\Lambda)$ is open whenever $x\in N$, and so one has that
\begin{equation}
\bar{R}(\phi)=\frac{N}{[N,N]}\otimes_{\Z}R
\end{equation}
is a discrete left $\RG$-module, which is named the {\it relation module} of the generalised presentation $((\ca A,\Lambda),\phi)$ over $R$.

A t.d.l.c. group $G$ is said to be {\it almost compactly presented} if there exists a generalised presentation $((\ca A,\Lambda),\phi)$ of $G$ such that $\Lambda$ is connected and finite and the corresponding relation module $\bar{R}(\phi)$ is a finitely generated $\R[G]$-module. Clearly,
$$\text{compactly presented}\Rightarrow\text{almost compactly presented}\Rightarrow\text{compactly generated};$$ the converse implications fail because, as is well known, there are discrete groups giving counterexamples.

\begin{prop}
A t.d.l.c. group $G$ is almost compactly presented if and only if $G$ is of type $\FP_2$ over $\Z$.
\end{prop}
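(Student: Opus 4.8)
The plan is to route both implications through the Cayley--Abels graph attached to a finite generalised presentation, exactly as in the proof of Proposition~\ref{prop:typeF2}, and to identify the relation module with the first homology of this graph. The clean homological engine will be Corollary~\ref{FPSchanuel}, which lets us detect type $\FP_2$ by testing finite generation of a single kernel.

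First I would fix a finite generalised presentation $((\ca A,\Lambda),\phi)$ of $G$, write $\Pi=\pi_1(\ca A,\Lambda)$ and $N=\ker(\phi)$, let $\ca T$ be the Bass--Serre tree of $\Pi$, and set $\Gamma=\ca T\quot N$. As observed in the proof of Proposition~\ref{prop:typeF2}, $\Gamma$ is a Cayley--Abels graph of $G$ with $\pi_1(\Gamma)=N$, and it is cocompact precisely because $\Lambda$ is finite and connected. Regarding $\Gamma$ as a proper discrete $G$-CW-complex, Fact~\ref{fact:cchain} together with the connectedness of $\Gamma$ yields an exact sequence
\begin{equation*}
0\longrightarrow \ker(\partial)\longrightarrow C_1(\Gamma,\Z)\xrightarrow{\ \partial\ } C_0(\Gamma,\Z)\xrightarrow{\ \epsilon\ }\Z\longrightarrow 0
\end{equation*}
of proper discrete permutation $\Z[G]$-modules, in which $C_0(\Gamma,\Z)$ and $C_1(\Gamma,\Z)$ are finitely generated by cocompactness. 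The crucial point is the identification $\ker(\partial)=H_1(\Gamma,\Z)\cong N/[N,N]=\bar{R}(\phi)$ of discrete $\Z[G]$-modules (for $\RR=\Z$), where the $G$-action on $N/[N,N]$ is the one induced by conjugation in $\Pi$.

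For the forward implication, suppose $G$ is almost compactly presented, so such a presentation can be chosen with $\bar{R}(\phi)$ finitely generated. By van Dantzig's theorem I can choose a finitely generated proper discrete permutation module $P_2$ surjecting onto $\ker(\partial)=\bar{R}(\phi)$; splicing this with the displayed sequence produces a finite-type partial proper discrete permutation resolution $P_2\to C_1(\Gamma,\Z)\to C_0(\Gamma,\Z)\to\Z\to 0$, whence $G$ has type $\FP_2$ over $\Z$. Conversely, suppose $G$ has type $\FP_2$ over $\Z$. Then $G$ has type $\FP_1$ and hence is compactly generated (passing to $\Q$ by flatness and using \cite{it:ratdiscoh}), so a finite generalised presentation exists and the construction above applies. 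Applying Corollary~\ref{FPSchanuel}(i) with $n=2$ to the length-$1$ partial resolution $C_1(\Gamma,\Z)\to C_0(\Gamma,\Z)\to\Z\to 0$ shows that $\ker(\partial)$ is finitely generated; since $\ker(\partial)\cong\bar{R}(\phi)$, the group $G$ is almost compactly presented.

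The main obstacle I anticipate is the module-level identification $\ker(\partial)\cong\bar{R}(\phi)$: one must check that the conjugation action of $\Pi/N=G$ on $N/[N,N]$ agrees with the $G$-action on $H_1(\Gamma,\Z)$ coming from the cellular $G$-action, and that $N/[N,N]$ is indeed the relation module of the \emph{chosen} presentation rather than of some other one. This is the direct analogue of the classical identification of the relation module of a finite presentation with the first homology of its presentation $2$-complex, and should carry over verbatim. The remaining steps are routine bookkeeping: finite generation of the $C_i(\Gamma,\Z)$ comes from cocompactness, and the passage between the kernel condition and type $\FP_2$ is packaged entirely by Corollary~\ref{FPSchanuel}.
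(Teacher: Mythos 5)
Your proposal is correct and takes essentially the same route as the paper: the paper's proof also passes to the Cayley--Abels graph $\Gamma=\ca T\quot N$ of a finite generalised presentation, uses the finite-type partial resolution $C_1(\Gamma,\RR)\to C_0(\Gamma,\RR)\to\RR\to0$, and invokes the identification $\bar{R}(\phi)\cong\ker(\partial)$ (citing \cite[Fact 5.1(a)]{it:ratdiscoh} for the point you flag as the main obstacle). Your extra detail --- splicing on a finitely generated permutation module for the forward direction and applying Corollary~\ref{FPSchanuel} for the converse --- is exactly what the paper's closing ``the result follows'' compresses.
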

\begin{proof}
Since $G$ is of type $\FP_1$ if and only if $G$ is compactly generated, we can assume $G$ to be compactly generated without loss of generality. Let $((\ca A,\Lambda),\phi)$ be a generalised presentation of $G$ such that $\Lambda$ is finite and connected. For $\Pi=\pi_1(\ca A,\Lambda)$, let $\ca T$ denote the Bass-Serre tree of $\Pi$. Put $N=\ker(\phi)$ and notice that the partial simplicial chain complex of the quotient graph $\Gamma=\ca T\quot N$ given by
$$\xymatrix{E(\Gamma) \otimes_{\Z} R \ar[r]^{\partial} & V(\Gamma) \otimes_{\Z} R \ar[r] & R\ar[r] & 0}$$
is a partial projective resolution of $R$ of finite type in $\RGdis$. Since $\bar{R}(\phi)\cong\ker(\partial)$ (cf. \cite[Fact 5.1(a)]{it:ratdiscoh}), the result follows.
\end{proof}
\section{Finiteness criteria for t.d.l.c. groups}\label{s:criteria}
\subsection{Bieri's criterion}
\label{Bieri}

We now investigate the homological behaviour of these finiteness conditions over $\Q$.

Let $\Gamma$ be a directed graph with no loops. Then limits and colimits can be regarded as functors from the category of ($\Gamma$-diagrams in the category $\QGdis$) into $\QGdis$ and denoted by $\dd\lim$ and $\dd\colim$ respectively. One speaks about {\it exact limits} or {\it exact colimits} whenever the graphs $\Gamma$ are such that $\dd\lim$ and $\dd\colim$ are exact functors. E.g., $\dd\prod$ can be regarded as the exact limit (cf. Fact~\ref{fact:lim}) over a graph $\Gamma$ consisting of a set of vertices and no edges. Dually, the direct limit is a special case of exact colimit. For every discrete left $\QG$-module $A$ and all $k\geq0$ one has easily that
\begin{enumerate}[(a)]
\item the functor $\dTor_k^G(\argu,A)$ commutes with exact colimits;
\item the functor $\dExt^k_G(A,\argu)$ commutes with exact limits.
\end{enumerate}
In this section we prove the analogue in the context of t.d.l.c. groups of Bieri's famous criterion for modules of type $\FP_n$. Those modules turn out to be the discrete $\QG$-modules making $\dTor$ commute with exact limits. The proof is essentially the one provided by Bieri with some adjustment such as the following lemma.
\begin{lem}
\label{lem:lim&proj}
Let $P$ be a finitely generated proper discrete permutation $\QG$-module. Then $\argu\otimes_G P$ commutes with limits. Namely, the natural homomorphism $$(\dd\lim M_*)\otimes_G P \to \lim(M_*\otimes_G P)$$ is an isomorphism of $\Q$-modules.
\end{lem}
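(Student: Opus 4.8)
The plan is to reduce to the case of a single orbit and then replace the right-exact functor $\argu\otimes_G P$ by a limit-preserving one. Since $P$ is finitely generated and proper, $P\cong\bigoplus_{i=1}^k\Q[G/U_i]$ with each $U_i\in\CO(G)$; as $\otimes_G$, $\dd\lim$ and the canonical comparison map all respect finite direct sums, it suffices to treat $P=\Q[G/U]$ for a single compact open subgroup $U$.

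First I would identify the functor $\argu\otimes_G\Q[G/U]$. Writing $\Q[G/U]\cong\QG\otimes_{\QU}\Q$, for any discrete right $\QG$-module $A$ one has $A\otimes_G\Q[G/U]\cong A\otimes_U\Q=A_U$, the module of $U$-coinvariants. Because $U$ is profinite and we work over $\Q$, the normalised averaging idempotent $e_U$ -- defined on each element using that its stabiliser is open of finite index in $U$, so that its $U$-orbit is finite -- yields a natural isomorphism $A_U\cong A^U$ onto the $U$-invariants. This is exactly where the divisibility of $\Q$ enters, in the same spirit as the splitting results recalled at the end of \S\ref{ss:perm}. Thus $\argu\otimes_G\Q[G/U]\cong(\argu)^U$ naturally in $A$.

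The key observation is then that, since $U$ is open, any element of $\lim M_*$ fixed by $U$ has open stabiliser and hence already lies in $\dd\lim M_*$; that is, $(\dd\lim M_*)^U=(\lim M_*)^U$, so the discretising functor simply disappears on invariants. Combining this with the fact that $(\argu)^U\cong\Hom_{\QU}(\Q,\argu)$ is a right adjoint, and therefore commutes with the (abstract) limit $\lim M_*$, I obtain the chain
$$(\dd\lim M_*)\otimes_G\Q[G/U]\cong(\dd\lim M_*)^U=(\lim M_*)^U\cong\lim(M_*^U)\cong\lim(M_*\otimes_G\Q[G/U]),$$
and one checks that this composite is precisely the comparison map induced by the projections of the limit.

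The routine parts are the reduction to a single orbit and the commutation of $(\argu)^U$ with limits. The main obstacle is the passage from coinvariants to invariants: I must verify that $e_U$ is well defined on a discrete $\QU$-module (using finiteness of orbits in place of an honest Haar integral), that it is natural in the module, and -- most importantly -- that the resulting identification $\argu\otimes_G\Q[G/U]\cong(\argu)^U$ is compatible with the projections, so that the displayed isomorphisms really compose to the canonical comparison map. Once this naturality is in hand, the remark that the $U$-invariants of $\lim M_*$ are automatically discrete is what makes the whole argument go through, and it is the genuinely new ingredient compared with the discrete-group case.
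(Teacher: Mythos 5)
Your proposal is correct and follows essentially the same route as the paper: reduce by additivity to a transitive module $\Q[G/U]$, identify $\argu\otimes_G\Q[G/U]$ with $U$-coinvariants, invoke the profinite-over-$\Q$ isomorphism $A^U\cong A_U$ (this is \cite[Prop.~4.2]{it:ratdiscoh}, so you need not reconstruct the averaging idempotent from scratch), and conclude because invariants commute with limits. Your explicit observation that $(\dd\lim M_*)^U=(\lim M_*)^U$ since $U$ is open is a detail the paper leaves implicit, and it is a worthwhile clarification.
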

\begin{proof}
Since $\argu\otimes_G\argu$ and $\lim$ are additive functors, we can assume $P$ to be transitive, i.e., $P=\Q[G/U]$. Since
\begin{equation}\label{eq:eq1}
\argu\otimes_G\ind_U^G(\Q)\cong \rst^G_U(\argu)\otimes_U\Q\colon\QGdis\to\Qmod
\end{equation}
are naturally isomorphic functors, one has to prove that limits are preserved by taking co-invariants $(\argu)_U$.
Since $U$ is profinite, the canonical map
\begin{equation}
\phi_{A,U}\colon A^U\to A_U,\quad \phi_{A,U}(a)=a\otimes_U 1,\quad\forall a\in A
\end{equation}
is an isomorphism for every $A\in\ob(\QGdis)$ (cf. \cite[Prop. 4.2]{it:ratdiscoh}). Now it suffices to notice that limits are preserved by taking invariants.
\end{proof}
\begin{thm}[Bieri's criterion]
\label{thm:Bieri}
Let $A$ be a discrete (left) $\QG$-module. Then the following are equivalent:
\begin{enumerate}[(a)]
\item $A$ is of type $\FP_n$, $n\in\N\cup\{\infty\}$.
\item For every exact limit the natural homomorphism
\begin{equation}
\dTor_k^G(\dd\lim M_*,A)\to \lim\dTor^G_k(M_*,A)
\end{equation}
is an isomorphism for $k\leq n-1$ and an epimorphism for $k=n$.
\item For an arbitrary direct product $\dd\prod \biB(G)$ of copies of the rational discrete standard bimodule the natural map
\begin{equation}
\dTor_k^G(\dd\prod \biB(G),A)\to\prod\dTor^G_k(\biB(G),A)
\end{equation}
is an isomorphism for $k\leq n-1$ and an epimorphism for $k=n$.
\end{enumerate}
\end{thm}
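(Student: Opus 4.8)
The plan is to prove the cycle $(a)\Rightarrow(b)\Rightarrow(c)\Rightarrow(a)$, modelled on Bieri's original argument but with Lemma~\ref{lem:lim&proj} and the exactness of products (Fact~\ref{fact:lim}) doing the work that freeness of $RG$ does in the abstract case. Throughout I will use that $\biB(G)$ is flat (Fact~\ref{fact:flat}), so that $\dTor^G_k(\biB(G),A)=0$ for $k\geq1$, together with the identification $\dTor^G_0(\biB(G),A)=\biB(G)\otimes_G A\cong A$ valid for any discrete module $A$ (the analogue of $RG\otimes_{RG}A=A$); this identifies the target in $(c)$ with $\prod A$ concentrated in degree $0$. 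Granting this, $(b)\Rightarrow(c)$ is immediate: a direct product is the $\dd\lim$ over a graph with no edges, and this limit is exact by Fact~\ref{fact:lim}, so $(c)$ is just the instance of $(b)$ where every $M_*$ equals $\biB(G)$.

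For $(a)\Rightarrow(b)$, assume $A$ is of type $\FP_n$ and choose a proper discrete permutation resolution $P_\bullet\to A$ with $P_k$ finitely generated for $k\leq n$; since proper permutation $\QG$-modules are flat, $\dTor^G_k(M_*,A)=\mathrm H_k(M_*\otimes_G P_\bullet)$. Because each $P_k$ with $k\leq n$ is finitely generated proper permutation, Lemma~\ref{lem:lim&proj} gives $(\dd\lim M_*)\otimes_G P_k\cong\lim(M_*\otimes_G P_k)$, so the chain complexes $(\dd\lim M_*)\otimes_G P_\bullet$ and $\lim(M_*\otimes_G P_\bullet)$ are naturally isomorphic in degrees $\leq n$ and carry a comparison map in degree $n+1$. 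As the limit is exact it commutes with homology, whence $\mathrm H_k(\lim(M_*\otimes_G P_\bullet))\cong\lim\dTor^G_k(M_*,A)$; comparing the two complexes, which agree through degree $n$, yields that $\dTor^G_k(\dd\lim M_*,A)\to\lim\dTor^G_k(M_*,A)$ is an isomorphism for $k\leq n-1$ (all three relevant chain groups coincide) and an epimorphism for $k=n$ (the numerators coincide but the degree $n+1$ boundaries may shrink).

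The substantial direction is $(c)\Rightarrow(a)$, which I would prove by induction on $n$, reducing $n=\infty$ to all finite $n$ by Corollary~\ref{FPSchanuel}. For the base case $n=0$ the point is to analyse the image of the evaluation map $\dd\prod_I\biB(G)\otimes_G A\to\prod_I A$: every element of the source is a finite sum of pure tensors $\xi\otimes a$, and such a tensor maps to a family all of whose coordinates lie in the cyclic submodule generated by $a$, so the whole image consists of families taking values in a single finitely generated submodule of $A$. Applying surjectivity to the index set $I=A$ and the ``diagonal'' family $(a)_{a\in A}$ then forces all of $A$ to lie in one finitely generated submodule, i.e. $A$ is finitely generated. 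For the inductive step, note that $(c)$ at level $n$ entails $(c)$ at level $n-1$, so by induction $A$ is of type $\FP_{n-1}$; fix a finite-type partial proper permutation resolution with $n$-th syzygy $K$, so that by Corollary~\ref{FPSchanuel} it suffices to prove $K$ finitely generated, i.e. to verify the $n=0$ criterion for $K$.

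Writing $B=\dd\prod_I\biB(G)$ and letting $L_j$ denote the successive syzygies (so $L_0=A$ and $L_n=K$), I would compare, for each short exact sequence $0\to L_{j+1}\to P_j\to L_j\to0$, the right-exact sequence obtained by applying $B\otimes_G\argu$ with the short exact sequence obtained by applying $\prod_I(\biB(G)\otimes_G\argu)$ (exact since $\biB(G)$ is flat and $\prod_I$ is exact); Lemma~\ref{lem:lim&proj} makes the middle comparison map $B\otimes_G P_j\to\prod_I P_j$ an isomorphism. Dimension shifting through the flat modules $P_j$ turns the vanishing of $\dTor^G_k(B,A)$ for $1\leq k\leq n-1$ (forced by the isomorphisms in $(c)$, whose targets vanish) into the vanishing of $\dTor^G_1(B,L_j)$ for $j\leq n-2$; a five-lemma induction starting from the isomorphism $B\otimes_G A\cong\prod_I A$ then shows the comparison map $B\otimes_G L_j\to\prod_I L_j$ is an isomorphism for all $j\leq n-1$. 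A final four-term diagram chase in the ladder for $0\to K\to P_{n-1}\to L_{n-1}\to0$, using injectivity of this map for $L_{n-1}$ and the isomorphism in the middle, shows the evaluation map for $K$ is surjective, completing the induction. I expect the main obstacle to be precisely this last direction: keeping the dimension-shifting bookkeeping consistent with the comparison maps, and in particular exploiting that Lemma~\ref{lem:lim&proj} applies only to the finitely generated permutation modules $P_j$ and not to the (a priori infinitely generated) syzygies $L_j$ -- the failure of this commutation for non-finitely-generated modules being exactly what the criterion detects.
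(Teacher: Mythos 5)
Your proposal is correct and follows essentially the same route as the paper: $(a)\Rightarrow(b)$ via Lemma~\ref{lem:lim&proj} and the exactness of the limit commuting with homology, $(b)\Rightarrow(c)$ as a special case, and $(c)\Rightarrow(a)$ by the diagonal-family argument for the $\FP_0$ base case followed by an induction using flatness of $\biB(G)$, exact products, and the five lemma on the syzygies. The only cosmetic difference is that the paper descends one syzygy at a time (showing condition $(c)$ at level $n-1$ for the first syzygy and invoking the inductive hypothesis), whereas you first conclude $A$ is of type $\FP_{n-1}$ and then dimension-shift all the way down to the $n$-th syzygy to verify the $\FP_0$ criterion for it directly; the underlying computation is the same.
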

\begin{proof} (a)$\Rightarrow$(b) Choose a projective resolution $P\to A$ such that the discrete modules $P_k$ are finitely generated permutation modules with compact open stabilisers for all $k\leq n$. By the previous Lemma, the natural homomophism
$$(\dd\lim M_*)\otimes_G P\to \lim(M_*\otimes_G P)$$
is an isomorphism for all $k\leq n$. Since $\lim$ is assumed to be exact it commutes with the homology functor and $(b)$ follows by diagram chasing.

(b)$\Rightarrow$(c) is trivial.

(c)$\Rightarrow$(a) Let $n=0$ and recall that there is an isomorphism
\begin{equation}\label{eq:theta1}
\theta_A\colon\biB(G)\otimes_G A\to A,
\end{equation}
natural in $A$ (cf. \cite[\S 4.2]{it:ratdiscoh}) that can be described by
\begin{equation}\label{eq:theta2}
\theta_A(Ug\otimes_G a)=\frac{1}{|U:U\cap W|}\sum_{r\in \ca R}r\cdot a,\quad\forall\ g\in G, \forall a\in A,
\end{equation}
where $W=\stab(g\cdot m)$ and $\ca R$ is a set of representatives of $U\cap W$ in $U$.

Now take $A$ itself as an index set. By assumption and \eqref{eq:theta1} the map
\begin{equation}
\mu\colon (\dd\prod_{a\in A} \biB(G))\otimes_G A\to \prod_{a\in A} A
\end{equation} is an epimorphism, and so the diagonal in $\prod A$ has a preimage
\begin{equation}
c=\sum_{i=1}^k (\prod_{a\in A}\lambda^a_i)\otimes_G a_i,
\end{equation}
with $\lambda^a_i\in\biB(G)$ and $a_i\in A$ for all $i=1,\ldots, k$ and $a\in A$. In particular,
\begin{equation}\label{eq:prod}
\begin{split}
\prod_{a\in A} a=\mu(c)&=\sum_{i=1}^k (\mu((\prod_{a\in A}\lambda^a_i)\otimes_G a_i))=\\
                                   &=\sum_{i=1}^k(\prod_{a\in A}(\theta_A(\lambda^a_i\otimes_G a_i))=\prod_{a\in A}\sum_{i=1}^k(\theta_A( \lambda^a_i\otimes_G a_i)).
\end{split}
\end{equation}
Note that $\theta_A(\lambda^a_i\otimes_G a_i)$ is in the $\QG$-submodule generated by $a_i$. Thus, by comparing component-wise in \eqref{eq:prod}, the description \eqref{eq:theta2} shows that $A$ is finitely generated by $\{a_1,\ldots,a_k\}$, i.e., $A$ is of type $\FP_0$.

Let $n>0$. As before $A$ is finitely generated, so it admits a presentation $$\xymatrix{0\ar[r]&K\ar[r]&P\ar[r]&A\ar[r]&0}$$ where $P$ is a finitely generated proper discrete permutation module. It suffices then to prove that $K$ is of type $FP_{n-1}$. To this end let $\dd\prod\biB(G)$ be an arbitrary product of copies of $\biB(G)$. By naturality one may consider the following commutative diagram
$$\resizebox{\hsize}{!}{
\xymatrix@C=3mm{
\dTor^G_n(\dd\prod\biB(G) ,P)\ar[d]^{\wr}\ar[r]&\dTor_{n}(\dd\prod\biB(G),A)\ar@{->>}[d]\ar[r]&\dTor_{n-1}^G(\dd\prod\biB(G),K)\ar[d]\ar[r]&\dTor_{n-1}^G(\dd\prod\biB(G),P)\ar[r]\ar[d]^{\wr}      &\dTor_{n-1}^G(\dd\prod\biB(G),A)\ar[d]^\wr\\
\prod\dTor^G_n( \biB(G),P)\ar[r]             &\prod\dTor^G_{n}(\biB(G),A) \ar[r]             &\prod\dTor_{n-1}^G( \biB(G),K)\ar[r]              &\prod\dTor_{n-1}^G( \biB(G),P)\ar[r]   &\prod\dTor_{n-1}^G( \biB(G),A)
}}$$
where the first and fourth maps are isomorphisms by Lemma~\ref{lem:lim&proj}. Finally the 5-Lemma yields that $\dTor_k^G(\prod P_*, k)\to\prod\dTor_k(P_*,K)$ is an isomorphims for $k\leq n-1$ and an epimorphism for $k=n$. By the induction hypothesis one conclude that $K$ is of type $\FP_{n-1}$.
\end{proof}
\begin{fact}\label{rem:bieri} Condition $(b)$ in the previous result is equivalent to one/both of the following conditions:

\noindent $(b')\ \mu\colon(\dd\prod \biB(G))\otimes_G A\stackrel{\sim}{\to} \prod A$ and
$\dTor^G_k(\dd\prod \biB(G),A)=0$ \mbox{for $1\leq k\leq n-1$.}

\noindent $(b'')\ A$ is finitely presented and $\dTor^G_k(\dd\prod \biB(G),A)=0$ for $1\leq k\leq n-1$.
\end{fact}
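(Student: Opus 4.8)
The plan is to lean on Theorem~\ref{thm:Bieri}, which already supplies $(b)\Leftrightarrow(c)$, and to reduce the Fact to two smaller steps: first identifying $(c)$ with $(b')$ by unwinding the terms, then proving $(b')\Leftrightarrow(b'')$ by characterising finite presentability in terms of the map $\mu$. Throughout I would assume $n\geq1$; for $n=0$ condition $(b)$ records only that $A$ is finitely generated, which is weaker than $(b')$ or $(b'')$ (these involve finite presentation), so that degenerate case is excluded.

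For $(c)\Leftrightarrow(b')$ I would simply compute the right-hand side of $(c)$. Since $\biB(G)$ is flat by Fact~\ref{fact:flat}(2), one has $\dTor^G_k(\biB(G),A)=0$ for all $k\geq1$, so $\prod\dTor^G_k(\biB(G),A)=0$ there; and for $k=0$ the natural isomorphism $\theta_A$ of \eqref{eq:theta1} identifies $\prod\dTor^G_0(\biB(G),A)$ with $\prod A$, turning the $k=0$ comparison map of $(c)$ into $\mu$. Hence asking that the comparison map of $(c)$ be an isomorphism for $k\leq n-1$ amounts exactly to $\mu$ being an isomorphism together with $\dTor^G_k(\dd\prod\biB(G),A)=0$ for $1\leq k\leq n-1$, while the surjectivity demand at $k=n$ is automatic because its target is zero. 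This is precisely $(b')$, so $(b)\Leftrightarrow(c)\Leftrightarrow(b')$.

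Since $(b')$ and $(b'')$ share the vanishing of $\dTor^G_k(\dd\prod\biB(G),A)$ for $1\leq k\leq n-1$, the remaining task is to show that ``$\mu$ is an isomorphism for every index set'' is equivalent to ``$A$ is finitely presented'' --- the t.d.l.c.\ analogue of the classical characterisation of finitely presented modules. For the direction assuming $A$ finitely presented, I would fix a presentation $P_1\to P_0\to A\to0$ by finitely generated proper discrete permutation modules and compare the right-exact rows obtained by applying $(\dd\prod\biB(G))\otimes_G\argu$ and $\prod(\biB(G)\otimes_G\argu)\cong\prod(\argu)$ (via $\theta$); the verticals over $P_0$ and $P_1$ are isomorphisms by Lemma~\ref{lem:lim&proj}, so the five lemma forces $\mu$ to be an isomorphism. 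For the converse I would first note that $\mu$ being epic already gives finite generation of $A$, by the argument in the case $n=0$ of the proof of Theorem~\ref{thm:Bieri}; choosing $0\to K\to P_0\to A\to0$ with $P_0$ a finitely generated proper discrete permutation module, I would then run the $\dTor$ long exact sequences for $\dd\prod\biB(G)$ and for $\biB(G)$, use flatness of $\biB(G)$ to kill $\dTor^G_1(\biB(G),A)$ so that the bottom row reads $0\to\prod K\to\prod P_0\to\prod A\to0$, and chase the resulting ladder (whose verticals over $P_0$ and $A$ are isomorphisms) to conclude that the comparison map for $K$ is epic. As this holds for every index set, $K$ is finitely generated, again by the $n=0$ case of Theorem~\ref{thm:Bieri}, and so $A$ is finitely presented.

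I expect the main obstacle to be this converse half of the last equivalence: the term $\dTor^G_1(\dd\prod\biB(G),A)$ sits at the left end of the top row and must be neutralised by the diagram chase in order to deduce that the comparison map for $K$ is an epimorphism. Everything else is formal once the two inputs are in place, namely the vanishing $\dTor^G_k(\biB(G),A)=0$ for $k\geq1$ and the limit-exchange isomorphism of Lemma~\ref{lem:lim&proj}.
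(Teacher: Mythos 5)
Your proposal is correct, and since the paper states this Fact without proof there is nothing to compare it against directly; your route is the standard Bieri-style argument the authors evidently have in mind. The reduction of $(c)$ to $(b')$ via flatness of $\biB(G)$ (Fact~\ref{fact:flat}) and $\theta_A$ is immediate, and your proof that ``$\mu$ is an isomorphism for every index set'' characterises finite presentability --- the five-lemma ladder built from Lemma~\ref{lem:lim&proj} in one direction, and in the other the long-exact-sequence chase showing $\mu_K$ is epic for $K=\ker(P_0\to A)$ (where, as you observe, the $\dTor_1^G(\dd\prod\biB(G),A)$ term causes no harm because only right-exactness of $\otimes_G$ at the $P_0$ spot is needed), followed by the $n=0$ diagonal argument applied to $K$ --- is sound. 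Your caveat that $n\geq 1$ must be assumed is also correct: for $n=0$ condition $(b)$ only encodes finite generation while $(b')$ and $(b'')$ encode finite presentation, so the Fact is implicitly restricted to $n\geq 1$.
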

\begin{rem}
Bieri's cohomological criterion - i.e., $\dExt$ commutes with exact colimits - also works as in the abstract case without any adjustment. Indeed, it has been already used in \cite{ilaria}.
\end{rem}


\subsection{Brown's criteria}
A discrete $G$-CW-complex $X$ is said to be {\it $n$-good for $G$ over $\RR$} if the following two conditions hold:
\begin{itemize}
\item[(G1)] $X$ is acyclic in dimensions $<n$, i.e., the reduced homology $\widetilde H_i(X;R) = 0$ for
$i<n$.
\item[(G2)] For $0\leq p\leq n$ and every $p$-cell $\sigma$, the stabiliser $G_{\sigma}$ - which is a t.d.l.c. group - is of type $\FP_{n-p}$ over $R$.
\end{itemize}
Such an $X$ always exists. For example, we could take $X$ to be the universal discrete $G$-CW-complex with compact stabilisers (cf. \cite[\S 6.2]{it:ratdiscoh}) of $G$, in which case (G1) and (G2) both hold for all $n$ for trivial reasons.
\begin{prop}
\label{Brownelementary}
Suppose that a t.d.l.c. group $G$ admits an $n$-good $G$-CW-complex $X$ over $R$ of type $F_n$, i.e., such that the $n$-skeleton has finitely many $G$-orbits. Then $G$ is of type $\FP_n$.
\end{prop}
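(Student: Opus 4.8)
The plan is to use the augmented cellular chain complex of $X$ as a partial permutation resolution of $R$, and then to trade the failure of the chain modules to be \emph{proper} (their stabilisers are open but need not be compact) against the stabiliser hypothesis (G2), via a dimension-shifting argument built from Corollary~\ref{cor:FPn}. The complex itself is not an admissible resolution, so the work lies in the bookkeeping that propagates the degreewise finiteness up to a statement about $R$.

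First I would record the homological type of each chain module. By definition $C_p(X,R)=\bigoplus_\sigma R[G/G_\sigma]$, the sum ranging over representatives of the $G$-orbits of $p$-cells; since $X$ is of type $F_n$ this sum is finite for $p\le n$, so $C_p$ is finitely generated. Writing $R[G/G_\sigma]=\ind^G_{G_\sigma}R$ and using that (G2) makes each stabiliser $G_\sigma$ of type $\FP_{n-p}$, Corollary~\ref{cor:fpinduction} gives that $\ind^G_{G_\sigma}R$ has type $\FP_{n-p}$ over $RG$, and a finite direct sum of such modules is again of type $\FP_{n-p}$ by Corollary~\ref{cor:FPn}(c). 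Hence $C_p$ has type $\FP_{n-p}$ over $RG$ for all $0\le p\le n$.

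Next I would extract exactness from (G1). By Proposition~\ref{prop:redhom} the augmented complex $\cdots\to C_1\to C_0\xrightarrow{\epsilon}R\to0$ has homology $\widetilde H_i(X;R)$ in degree $i$, so the vanishing of reduced homology below $n$ makes it exact at $R,C_0,\dots,C_{n-1}$. Put $K_0=\ker\epsilon$ and $K_p=\ker(\partial_p\colon C_p\to C_{p-1})$ for $1\le p\le n-1$; exactness then yields the short exact sequences $0\to K_0\to C_0\to R\to0$ and $0\to K_p\to C_p\to K_{p-1}\to0$ for $1\le p\le n-1$. Crucially, exactness at $C_{n-1}$ identifies $K_{n-1}$ with $\mathrm{im}(\partial_n)$, a quotient of the finitely generated module $C_n$, so $K_{n-1}$ is finitely generated, i.e. of type $\FP_0$.

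The proof then closes by descending induction on $p$, the base case being $K_{n-1}$ of type $\FP_0=\FP_{n-1-(n-1)}$. Given $K_p$ of type $\FP_{n-1-p}$, the sequence $0\to K_p\to C_p\to K_{p-1}\to0$ with $C_p$ of type $\FP_{n-p}$ gives, by Corollary~\ref{cor:FPn}(a), that $K_{p-1}$ is of type $\FP_{n-p}=\FP_{n-1-(p-1)}$; descending to $p=0$ leaves $K_0$ of type $\FP_{n-1}$, and one final application of Corollary~\ref{cor:FPn}(a) to $0\to K_0\to C_0\to R\to0$, using $C_0$ of type $\FP_n$, shows $R$ has type $\FP_n$. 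The case $n=\infty$ follows since $X$ is then $m$-good of type $F_m$ for every finite $m$, whence $G$ is $\FP_m$ for all $m$ and so $\FP_\infty$ by Corollary~\ref{FPSchanuel}(ii). The one genuinely load-bearing point, and the main thing to get right, is the use of the finite-type hypothesis at the \emph{top} dimension $n$: it is exactly what makes $K_{n-1}$ finitely generated and so launches the descending induction, the rest being routine applications of the three-term exact-sequence behaviour of type $\FP_n$.
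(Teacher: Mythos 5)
Your proof is correct and is essentially the paper's proof: the paper simply states that the argument of Brown's Proposition 1.1 transfers verbatim, and what you have written out is exactly that argument in the t.d.l.c. setting, with the two load-bearing ingredients (each $C_p$ has type $\FP_{n-p}$ via Corollary~\ref{cor:fpinduction}, and the descending dimension shift via Corollary~\ref{cor:FPn}) correctly identified and correctly applied.
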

\begin{proof}
The proof of \cite[Proposition~1.1]{brown:fp} can be transferred verbatim.
\end{proof}
\begin{rem}
Notice that \cite[Proposition~6.6]{it:ratdiscoh} can be deduced from this result considering the topological realisation of a discrete simplicial $G$-complex.
\end{rem}
A filtration $\{X_\alpha\}$ of $X$ is said to be of {\it finite $n$-type} if each $G$-subcomplex $X_\alpha$ is of type $F_n$. Moreover, the filtration is {\it $\mathrm{H}$-essentially trivial in dimension $n$} if for all $\alpha$ there is some $\beta \geq \alpha$ such that the canonical map $\mathrm{H}_n(X_\alpha,R) \to \mathrm{H}_n(X_\beta,R)$ is trivial, or, equivalently, the $n$th homology group satisfies $\varinjlim_{\alpha}\prod_J\mathrm{H}_n(X_\alpha,R)=0$, for any index set $J$. The following theorem can equivalently be stated using reduced homology, cf. \cite[Theorem 2.2]{brown:fp} -- but note that the proof there works by exploiting Bieri's criterion, which we avoid as our analogue of Bieri's criterion only holds over $\Q$.
\begin{thm}[Brown's criterion for $\FP$-properties]\label{thm:Brown1}
Let $G$ be a t.d.l.c. group and $X$ an $n$-good discrete $G$-CW-complex over $R$ with a filtration $\{X_\alpha\}$ of finite $n$-type. Then $G$ has type $\FP_n$ over $R$ if and only if $\{X_\alpha\}$ is $\mathrm{H}$-essentially trivial in dimension $1 \leq k<n$.
\end{thm}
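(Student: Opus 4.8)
The plan is to follow Brown's original strategy from \cite{brown:fp}, but replacing the use of Bieri's criterion (which we only have over $\Q$) with a direct homological argument using the equivariant homology spectral sequences available from \S\ref{s:comparison} and the stability results of Corollary~\ref{cor:FPn}. The key structural input is the filtered equivariant spectral sequence computing $\dH^G_\bullet(X;R)$: since $X$ is $n$-good, conditions (G1) and (G2) let us control both the homology of $X$ and the finiteness type of the cell stabilisers, so that $\FP_n$ for $G$ is governed by the interplay between these.

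First I would treat the direction ``$G$ has type $\FP_n$ $\Rightarrow$ the filtration is H-essentially trivial in dimensions $1 \leq k < n$.'' Here I would use that, for each cell stabiliser $G_\sigma$ of a $p$-cell, type $\FP_{n-p}$ (from (G2)) means, via Corollary~\ref{cor:fpinduction}, that the induced permutation modules $R[G/G_\sigma]$ appearing in the cellular chain complex $C_\bullet(X_\alpha,R)$ have finite homological type in the appropriate range. Because each $X_\alpha$ is of finite $n$-type, its cellular chain complex is finitely generated in low degrees, and I would compare the homology of the system $\{X_\alpha\}$ against that of $G$ using the colimit $X = \varinjlim X_\alpha$ together with the fact (\S\ref{ss:CW}, Fact~\ref{fact:cchain}) that the cellular chains assemble into a resolution. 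The triviality of the maps $\mathrm{H}_k(X_\alpha,R) \to \mathrm{H}_k(X_\beta,R)$ should then fall out of the finite generation of the relevant syzygies, in the spirit of Corollary~\ref{FPSchanuel}.

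For the converse, ``essential triviality $\Rightarrow$ type $\FP_n$,'' I would argue by induction on $n$, the base cases $\FP_0$ and $\FP_1$ being handled by the characterisations in \S\ref{ss:typeF} together with Proposition~\ref{Brownelementary}. At the inductive step, I would use the finite $n$-type filtration to splice together a finite-type partial resolution of $R$: the cellular chain complex of each $X_\alpha$ supplies finitely generated proper discrete permutation modules in each degree $\leq n$ (using (G2) and Corollary~\ref{cor:fpinduction} to replace cell chains by honest permutation resolutions of the stabilisers), and the essential triviality of the homology in dimensions $1 \leq k < n$ ensures the syzygies stabilise so that the kernel at stage $n-1$ is finitely generated, invoking Corollary~\ref{FPSchanuel}. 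Conclude by Corollary~\ref{cor:FPn} that $R$ has type $\FP_n$ over $R$.

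The main obstacle, and the reason Brown's proof does not transfer verbatim, is exactly the point flagged before the statement: Brown passes through Bieri's criterion to convert the essential-triviality condition into a finiteness statement, but our Bieri's criterion (Theorem~\ref{thm:Bieri}) holds only over $\Q$, so over a general $R$ I must instead work directly with the permutation-module resolutions and their syzygies. The delicate part will be establishing that essential triviality of homology in the intermediate range $1 \leq k < n$ really does force finite generation of the $(n-1)$st syzygy without projectivity of the modules involved: this is precisely where Corollary~\ref{FPSchanuel} (which rests on Theorem~\ref{fpnkpn} and the topological category $\RGtop$) must do the work that Schanuel's lemma for projectives would ordinarily do.
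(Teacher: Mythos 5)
Your forward direction is aligned with the paper's, though under-specified: the paper shows by induction along the cellular chain complex of $X_\alpha$, using Corollary~\ref{cor:FPn} together with the fact that each $C_p(X_\alpha,R)$ has type $\FP_{n-p}$ (finitely many orbits plus (G2) plus Corollary~\ref{cor:fpinduction}), that the cycle modules $Z_k(X_\alpha,R)$, and hence the quotients $\mathrm{H}_k(X_\alpha,R)$, are finitely generated for $k<n$; since $\varinjlim_\alpha \mathrm{H}_k(X_\alpha,R)=\mathrm{H}_k(X,R)=0$ by (G1), the finitely many generators die at some $\beta\geq\alpha$. Note that Corollary~\ref{FPSchanuel} is the wrong tool here: it applies to partial \emph{resolutions}, and $C_\bullet(X_\alpha,R)$ is not exact.

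The converse direction of your proposal has a genuine gap. The cellular chain complex of $X_\alpha$ is \emph{not} a partial proper discrete permutation resolution of $R$ --- only $X$, not any individual $X_\alpha$, is acyclic below dimension $n$ --- so there is no finite-type partial resolution to splice together, and Corollary~\ref{FPSchanuel} cannot be invoked to show the $(n-1)$st kernel is finitely generated: its hypothesis is that the module already has type $\FP_n$, which is precisely what you are trying to prove, so the argument as outlined is circular. The missing idea is how to convert the vanishing of $\mathrm{H}_{n-1}(X_\alpha,R)\to \mathrm{H}_{n-1}(X_\beta,R)$ into finite generation of a cycle module. The paper does this geometrically: assuming type $\FP_{n-1}$ inductively, it attaches finitely many $G$-orbits of cells with compact open stabilisers to $X_\beta$ to kill $\mathrm{H}_k(X_\beta,R)$ for $k<n-1$, and matching cells to $X_\alpha$ so that the resulting complexes $X'_\alpha$ and $X'_\beta$ share an $(n-2)$-skeleton while the induced map $\mathrm{H}_{n-1}(X'_\alpha,R)\to\mathrm{H}_{n-1}(X'_\beta,R)$ is still trivial. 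Comparing the short exact sequences $0\to B_n\to Z_{n-1}\to \mathrm{H}_{n-1}\to 0$ for $X'_\alpha$ and $X'_\beta$ via a push-out $P$, the triviality of the map on homology makes $P$ a retract of the finitely generated module $B_n(X'_\beta,R)$, and the resulting exact sequence $0\to B_n(X'_\alpha,R)\to B_n(X'_\beta,R)\oplus Z_{n-1}(X'_\alpha,R)\to P\to 0$ forces $Z_{n-1}(X'_\alpha,R)$ to be finitely generated; one then kills $\mathrm{H}_{n-1}(X'_\alpha,R)$ with finitely many further orbits of cells and concludes by Proposition~\ref{Brownelementary}. Without some such retract argument (or an explicit substitute), your outline does not close.
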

\begin{proof}
The case $n=0$ is trivial; assume $n>0$.

Consider the cellular chain complex of $X_\alpha$: if $G$ has type $\FP_n$, using Corollary \ref{cor:FPn}, an easy induction shows that $Z_{k}(X_\alpha,R)$, the $k$-cycles of this complex, is finitely generated for $k < n$, so $\mathrm{H}_k(X_\alpha,R)$ is too. But $\varinjlim_\alpha \mathrm{H}_k(X_\alpha,R) = 0$ for $k < n$, so there is some $\beta \geq \alpha$ such that the images of the finitely many generators of $\mathrm{H}_k(X_\alpha,R)$ are $0$, so the image of the whole thing is $0$.

Conversely, suppose $\{X_\alpha\}$ is $\mathrm{H}$-essentially trivial in dimension $k<n$. For some fixed $\alpha$, find a $\beta \geq \alpha$ such that the maps $\mathrm{H}_k(X_\alpha,R) \to \mathrm{H}_k(X_\beta,R)$ are trivial for $k<n$. Assume inductively that $G$ has type $\FP_{n-1}$. We may therefore attach finitely many orbits of $k$-cells, $k<n$, with compact open stabilisers to $X_\beta$ to kill the homology $\mathrm{H}_k(X_\beta,R)$ for $k<n-1$, to get a new complex, $X'_\beta$, still with finite $n$-skeleton. Attach $k$-cells, $k \leq n-2$, to $X_\alpha$ to get a new complex $X'_\alpha$ with the same $(n-2)$-skeleton as $X'_\beta$. Notice that the new cells do not affect $\mathrm{H}_{n-1}(X_\alpha,R)$, so the induced map $\mathrm{H}_{n-1}(X'_\alpha,R) \to \mathrm{H}_{n-1}(X'_\beta,R)$ is trivial.

To deduce that $G$ has type $\FP_n$, it remains to show $Z_{n-1}(X'_\alpha,R)$ is finitely generated. Then we can apply Proposition \ref{Brownelementary} to $X'_\alpha$, after attaching finitely many orbits of cells to kill $\mathrm{H}_{n-1}(X'_\alpha,R)$. Now consider the commutative diagram
\[\xymatrix{
B_n(X'_\alpha,R) \ar@{>->}[r] \ar[d] & Z_{n-1}(X'_\alpha,R) \ar@{->>}[r] \ar[d] & \mathrm{H}_{n-1}(X'_\alpha,R) \ar@{=}[d] \\
B_n(X'_\beta,R) \ar@{>->}[r] \ar@{=}[d] & P \ar@{->>}[r] \ar[d] & \mathrm{H}_{n-1}(X'_\alpha,R) \ar[d]^{0} \\
B_n(X'_\beta,R) \ar@{>->}[r] & Z_{n-1}(X'_\beta,R) \ar@{->>}[r] & \mathrm{H}_{n-1}(X'_\beta,R),}\]
where $B_n()$ denotes the $n$-boundaries and $P$ is defined by the top-left square being a push-out. Observe that the rows are short exact, so the map of homology being trivial implies there is a map $P \to B_n(X'_\beta,R)$ making the diagram commute, and hence $P$ is a retract of the finitely generated module $B_n(X'_\beta,R)$, so itself finitely generated. Now we have an exact sequence $$0 \to B_n(X'_\alpha,R) \to B_n(X'_\beta,R) \oplus Z_{n-1}(X'_\alpha,R) \to P \to 0$$ with $P$ and $B_n(X'_\alpha,R)$ finitely generated; therefore $B_n(X'_\beta,R) \oplus Z_{n-1}(X'_\alpha,R)$ and hence $Z_{n-1}(X'_\alpha,R)$ are too.
\end{proof}
We now prove a similar condition for compact presentability. We will need \cite[Theorem 1']{brown:presentations}. We quote the result in the notation of \cite{brown:presentations}; see there for the definitions.
\begin{thm}\label{thm:bb}
Let $G$ be an abstract group and $X$ a simply-connected $G$-CW-complex. Then the canonical map $\phi\colon\widetilde{G}\to G$ is surjective and its kernel is the normal subgroup of $\widetilde{G}$ generated by the $r_\tau$ ($\tau\in F$).
\end{thm}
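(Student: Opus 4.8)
The plan is to prove Brown's result by applying Bass--Serre theory to the $1$-skeleton and then van Kampen's theorem to the $2$-cells. First I would reduce to the case $\dim X \le 2$: attaching cells of dimension $\ge 3$ alters neither $\pi_1(X)$ nor the data $(\widetilde G,\phi,\{r_\tau\})$, and since $\pi_1(X^{(2)}) = \pi_1(X) = 1$ we may replace $X$ by its $2$-skeleton throughout. By construction $\widetilde G$ is the fundamental group of the graph of groups carried by the quotient graph $X^{(1)}/G$, whose vertex and edge groups are the chosen cell stabilisers; by Bass--Serre theory it acts on its Bass--Serre tree $T$, and there is a $\widetilde G$-equivariant developing map $d\colon T \to X^{(1)}$ (with $\widetilde G$ acting on $X^{(1)}$ through $\phi$) covering the identity on the quotient graph $Y^{(1)} = X^{(1)}/G$.

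The first key step is to identify $d$. Because each vertex and edge group embeds in $\widetilde G$ and maps isomorphically onto the corresponding stabiliser in $G$, the kernel $K = \ker\phi$ intersects every cell stabiliser trivially and therefore acts freely on $T$. Connectedness of $X^{(1)}$ then yields surjectivity of $\phi$ and an isomorphism of $G$-graphs $T/K \cong X^{(1)}$ (the induced map is equivariant, covers the identity on $Y^{(1)}$, and is a local isomorphism since stabilisers match). As $T$ is a tree, this exhibits $T$ as the universal cover of $X^{(1)}$ and identifies the deck group $K$ with $\pi_1(X^{(1)})$.

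It then remains to incorporate the $2$-cells. By van Kampen, $\pi_1(X) = \pi_1(X^{(2)})$ is the quotient of $\pi_1(X^{(1)}) = K$ by the normal closure, taken inside $K$, of the boundary loops of all $2$-cells of $X$. Each $2$-cell is a $\widetilde G$-translate, via the action on $T = \widetilde{X^{(1)}}$, of one lying over an orbit representative $\tau \in F$, so these boundary loops are exactly the $\widetilde G$-conjugates of the relators $r_\tau$. Consequently the normal closure of $\{r_\tau : \tau \in F\}$ in $\widetilde G$ is contained in $K$ (each $r_\tau$ lies in $K$, which is normal in $\widetilde G$) and coincides with the normal-in-$K$ closure of all boundary loops. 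Since $X$ is simply connected this normal closure is all of $K$, whence $\ker\phi = K$ is the normal subgroup of $\widetilde G$ generated by the $r_\tau$, as claimed.

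I expect the main obstacle to be the bookkeeping: proving that $d$ descends to an isomorphism $T/K \cong X^{(1)}$ and matching each $2$-cell's topological boundary loop with the prescribed combinatorial relator $r_\tau$ in Brown's normalisation. This requires tracking the chosen orbit representatives, the edge-pairing elements implicit in the graph-of-groups structure, and the orientations of the attaching maps, so that the word $r_\tau$ of \cite{brown:presentations} agrees with the boundary loop read off from the action on $T$. The topological backbone (free action $\Rightarrow$ covering, tree $\Rightarrow$ universal cover, van Kampen for the $2$-cells) is routine; the care lies entirely in reconciling the group-theoretic conventions of \cite{brown:presentations} with this geometric picture.
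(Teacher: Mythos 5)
You should first be aware that the paper contains no proof of this statement: Theorem~\ref{thm:bb} is quoted verbatim, in Brown's own notation, as \cite[Theorem 1$'$]{brown:presentations}, and the authors explicitly defer all definitions and the argument to that reference. So there is no in-paper proof to compare against; the comparison has to be with Brown's original argument. Judged on its own terms, your outline is a correct and essentially standard route to the theorem. The reduction to the $2$-skeleton is harmless; $K=\ker\phi$ acts freely on the Bass--Serre tree $T$ because $K$ is normal and meets each vertex group trivially; the developing map $d\colon T\to X^{(1)}$ is onto by the usual ``open and closed'' connectivity argument (which simultaneously gives surjectivity of $\phi$); injectivity of $T/K\to X^{(1)}$ follows from the transitivity of $\widetilde G$ on the fibres of $T$ over $X^{(1)}/G$ together with the fact that $\phi$ restricts to isomorphisms on vertex stabilisers; and van Kampen then converts simple connectivity of $X^{(2)}$ into the statement that $K$ is normally generated in $\widetilde G$ by the $r_\tau$ (each $r_\tau$ does lie in $K$ by construction, and $K$-conjugacy ambiguities from the choice of connecting paths are absorbed by the normal closure). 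This is organised differently from Brown's own proof, which works more directly with edge-path loops: a word in the generators mapping to $1$ in $G$ is realised as a loop in $X$, filled by a disc using simple connectivity, and the disc is subdivided and pushed into the $2$-skeleton to express the word as a product of conjugates of the $r_\tau$; Theorem~1$'$ is then extracted as a reformulation in graph-of-groups language. Your factorisation through the identification $T\cong\widetilde{X^{(1)}}$ buys a cleaner conceptual picture at the cost of the convention-matching you honestly flag (reconciling Brown's combinatorial relators $r_\tau$ with the topological boundary loops, and the choices of orbit representatives, spanning tree and orientations); those checks do go through, and I see no gap in the strategy.
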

In particular, $\widetilde G$ is the fundamental group of a graph of groups $(\ca G, \Lambda)$ (cf. \cite[\S~3]{brown:presentations}) whose vertex/edge groups correspond
to vertex/edge isotropy groups of $X$ in $G$. Moreover, for every vertex group $G_v$ in $(\ca G, \Lambda)$, the restriction map $\phi\restriction_{G_v}\colon G_v\to G$ is the inclusion map.
\begin{thm}\label{thm:Brown2}
\label{thm:compres}
Let $G$ be a t.d.l.c. group which admits a simply connected discrete $G$-CW-complex satisfying:
\begin{enumerate}[(a)]
\item Every vertex isotropy group is compactly presented.
\item Every edge isotropy group is compactly generated.
\item $X$ has a finite $2$-skeleton mod $G$.
\end{enumerate}
Then $G$ is compactly presented.
\end{thm}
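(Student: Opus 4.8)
The plan is to transport Brown's proof of finite presentability into the generalized-presentation framework of Section~\ref{s:comparison}, using Theorem~\ref{thm:bb} as the only homotopy-theoretic input. First I would regard $G$ as an abstract group and $X$ as an abstract simply connected $G$-CW-complex, and apply Theorem~\ref{thm:bb}. This yields the canonical epimorphism $\phi\colon\widetilde G\to G$, where $\widetilde G=\pi_1(\ca G,\Lambda)$ is the fundamental group of the graph of groups produced there from the $G$-action on the $1$-skeleton $X_1$: its vertex and edge groups are precisely the vertex and edge isotropy groups of $X$, and $\ker\phi$ is the normal closure of the finite family $\{r_\tau\mid\tau\in F\}$ of relators coming from a set of orbit representatives of $2$-cells. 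Condition (c) forces $\Lambda$ to be finite and $F$ to be finite, while $\Lambda$ is connected because $X$ is; moreover $\phi$ restricts to the inclusion on each vertex group.

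Next I would promote $(\ca G,\Lambda)$ to a graph of t.d.l.c. groups: each isotropy group is an open subgroup of $G$, hence t.d.l.c., and every edge-group inclusion is an inclusion of open subgroups, so $\widetilde G$ carries a natural t.d.l.c. topology for which $\phi$ is a continuous open epimorphism, injective on each vertex group. The crucial claim is that $\widetilde G$ is compactly presented, and I would establish this by exhibiting a finite generalized presentation. Concretely, each of the finitely many vertex groups is compactly presented by (a), so admits a finite generalized presentation, i.e.\ a graph of profinite groups on a finite connected graph with finitely normally generated kernel; and each edge group is compactly generated by (b), so a finite symmetric generating set of it records the edge identifications. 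Gluing these profinite graphs of groups along the finitely many edges of a spanning tree of $\Lambda$ (with stable letters for the remaining edges) produces a generalized presentation of $\widetilde G$ based on a finite connected graph whose kernel is normally generated by the finitely many vertex relators together with the finitely many edge relations. Hence $\widetilde G$ is compactly presented.

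Finally, since $G\cong\widetilde G/\llangle r_\tau\mid\tau\in F\rrangle$ with $F$ finite, I would adjoin these finitely many relators to the finite generalized presentation of $\widetilde G$: this leaves the underlying finite connected graph unchanged and enlarges the set of normal generators of the kernel by finitely many elements, giving a finite generalized presentation of $G$. Therefore $G$ is compactly presented. One may equivalently phrase the last two paragraphs as closure statements: compact presentability is preserved under fundamental groups of finite graphs of groups with compactly presented vertex groups and compactly generated edge groups, and under quotients by normal subgroups generated as such by finitely many elements.

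I expect the principal obstacle to lie in the second paragraph, namely the bookkeeping needed to turn the abstract datum supplied by Theorem~\ref{thm:bb} into a genuine finite generalized presentation: one must check that $\widetilde G$ really is t.d.l.c.\ with $\phi$ continuous and open, that the edge inclusions are compatible with the chosen vertex presentations, and that the assembled kernel is finitely generated as a normal subgroup in the sense of condition (G2). The homotopical content is entirely contained in Theorem~\ref{thm:bb}; everything else is a careful translation into the profinite graph-of-groups language.
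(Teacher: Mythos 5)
Your proposal follows essentially the same route as the paper: apply Theorem~\ref{thm:bb}, topologise $\widetilde G$ using the openness of the isotropy groups, show $\widetilde G$ is compactly presented from (a) and (b), and then pass to the quotient by the finitely normally generated kernel supplied by (c). The only difference is that where you propose to assemble a finite generalised presentation of $\widetilde G$ and of the quotient by hand, the paper simply cites the closure properties of compact presentability under graphs of groups and under such quotients from Cornulier--de la Harpe (Propositions 8.B.9, 8.B.10 and 8.A.10 of \cite{cdlh}), which discharges exactly the bookkeeping you flag as the principal obstacle.
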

\begin{proof}
Let $\phi\colon\widetilde G\to G$ be the map of (abstract) groups given by Theorem~\ref{thm:bb}. Since all the isotropy groups are open in $G$, we can endow $\widetilde G$ with the unique group topology such that the inclusions $G_v\to\widetilde G$ and $G_e\to\widetilde G$ are topological isomorphisms onto open subgroups of $\widetilde G$ for all vertices $v$ and edges $e$ in the graph $\Lambda$ defining $\widetilde G$. Therefore, $\widetilde G$ is compactly presented by $(a)$ and $(b)$; see \cite[Propositions 8.B.9 and 8.B.10]{cdlh} for all the details. In particular, $\phi$ is a continuous surjective map of t.d.l.c. groups by construction. To conclude the proof it suffices to notice that we now have a short exact sequence
\begin{equation}
0\to N\to\widetilde G\to G\to 0,
\end{equation}
of t.d.l.c. groups and continuous homomorphisms, where the kernel $N$ is finitely generated as normal subgroup of $\widetilde G$, by $(c)$. Indeed, \cite[Prop.~8.A.10]{cdlh} shows that $G$ is compactly presented.
\end{proof}
As a consequence, we now obtain an analogue of \cite[Theorem 3.2]{brown:fp}.
\begin{thm}[Brown's criterion for compact presentability]\label{thm:Brown3}
Let $X$ be a simply connected discrete $G$-CW-complex such that the vertex stabilisers are compactly presented and the edge stabilisers are compactly generated. Let $\{X_\alpha\}$ be a filtration of $X$ such that each $X_\alpha$ has a finite $2$-skeleton mod $G$, and let $v \in \bigcap X_\alpha$ be a basepoint. If $G$ is compactly generated, then $G$ is compactly presented if and only if the direct system $\{\pi_1(X_\alpha,v)\}$ is essentially trivial.
\end{thm}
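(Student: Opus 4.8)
The plan is to transplant Brown's argument from \cite{brown:fp} to the t.d.l.c. setting, the essential inputs being the construction underlying Theorem~\ref{thm:bb} and the colimit machinery of Proposition~\ref{compprescolimit}. The first step is, for each index $\alpha$ (passing if necessary to the component of the basepoint, so that $X_\alpha$ is connected), to run the construction of \cite{brown:presentations} on the $2$-skeleton of $X_\alpha$. This yields a group $\Gamma_\alpha$, namely the fundamental group of a finite graph of groups whose underlying graph is the finite quotient $X_\alpha^{(1)}\quot G$, with vertex groups the (compactly presented) vertex stabilisers and edge groups the (compactly generated) edge stabilisers, together with a surjection $\psi_\alpha\colon\Gamma_\alpha\to G$ fitting into a short exact sequence $1\to\pi_1(X_\alpha,v)\to\Gamma_\alpha\to G\to1$; here Theorem~\ref{thm:bb} is exactly the case $\pi_1(X_\alpha,v)=1$, the general identification of the kernel with $\pi_1(X_\alpha,v)$ being the standard exactness coming from the Borel construction. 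Just as in the proof of Theorem~\ref{thm:compres}, I would topologise $\Gamma_\alpha$ so that the vertex and edge groups are compact-open subgroups; then $\Gamma_\alpha$ is a compactly presented t.d.l.c. group by \cite[Propositions 8.B.9 and 8.B.10]{cdlh}, the map $\psi_\alpha$ is a continuous quotient map, and its kernel $N_\alpha:=\pi_1(X_\alpha,v)$ is discrete (it meets the open vertex groups trivially) and hence countable. Finally, the inclusions $X_\alpha\hookrightarrow X_\beta$ induce compatible homomorphisms $\Gamma_\alpha\to\Gamma_\beta$ which restrict to the structure maps $\pi_1(X_\alpha,v)\to\pi_1(X_\beta,v)$ of the direct system, and the conjugation action of $\Gamma_\alpha$ on $N_\alpha$ induces, through $\psi_\alpha$, the natural $G$-action on $\pi_1(X_\alpha,v)$.

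For the forward implication, suppose $G$ is compactly presented and fix $\alpha$. Enumerating the countable discrete group $N_\alpha$ and letting $M_i$ be the normal closure in $\Gamma_\alpha$ of its first $i$ elements, I obtain an increasing sequence of compactly presented quotients $\Gamma_\alpha/M_i$ joined by quotient maps, whose abstract and topological colimit is $\Gamma_\alpha/N_\alpha=G$; this is precisely the situation produced in Proposition~\ref{compprescolimit}(ii). Proposition~\ref{compprescolimit}(iii) then forces the sequence to stabilise, so $\Gamma_\alpha/M_i\cong G$ for some $i$, whence $M_i=N_\alpha$ and $N_\alpha=\pi_1(X_\alpha,v)$ is finitely generated as a normal subgroup of $\Gamma_\alpha$, say by $w_1,\dots,w_k$. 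Since $X=\bigcup_\beta X_\beta$ is simply connected each $w_j$ is null-homotopic in $X$, and by compactness of a null-homotopy there is a single $\beta\geq\alpha$ in which all the $w_j$ already bound. Then each $w_j$ maps to $1$ under $N_\alpha\to N_\beta$, and as $\Gamma_\alpha\to\Gamma_\beta$ carries the normal closure of $\{w_j\}$ into the trivial normal closure, the whole map $\pi_1(X_\alpha,v)\to\pi_1(X_\beta,v)$ vanishes. As $\alpha$ was arbitrary, $\{\pi_1(X_\alpha,v)\}$ is essentially trivial.

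Conversely, assume essential triviality, fix any $\alpha$, and pick $\beta\geq\alpha$ with $\pi_1(X_\alpha,v)\to\pi_1(X_\beta,v)$ trivial; I would then show $N_\beta=\pi_1(X_\beta,v)$ is finitely normally generated in $\Gamma_\beta$. Choosing a maximal tree $T$ of $X_\alpha^{(1)}$ and extending it to a maximal tree $T'$ of $X_\beta^{(1)}$, the free group $\pi_1(X_\beta^{(1)},v)$ has the edges outside $T'$ as free generators, split into the old edges (those in $X_\alpha$, outside $T$) and the new edges of $X_\beta\setminus X_\alpha$, of which there are finitely many $G$-orbits because $X_\beta$ has finite $2$-skeleton mod $G$. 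The loops around the old edges lie in the image of $\pi_1(X_\alpha,v)$ and so vanish in $\pi_1(X_\beta,v)$ by the choice of $\beta$; hence $\pi_1(X_\beta,v)$ is generated by the classes of the new edges. Since every new edge is a $G$-translate of one of finitely many orbit representatives, and the $G$-action on $N_\beta$ is realised by conjugation in $\Gamma_\beta$, it follows that $N_\beta$ is the normal closure in $\Gamma_\beta$ of finitely many classes $\xi_1,\dots,\xi_r$. Feeding the short exact sequence $1\to N_\beta\to\Gamma_\beta\to G\to1$, with $\Gamma_\beta$ compactly presented and $N_\beta$ finitely normally generated, into \cite[Proposition 8.A.10]{cdlh} shows that $G$ is compactly presented.

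The main obstacle is the first paragraph: making the classical construction of \cite{brown:presentations} genuinely topological for the merely connected (not simply connected) complex $X_\alpha$, that is, verifying that $\Gamma_\alpha$ carries a t.d.l.c. topology for which $1\to\pi_1(X_\alpha,v)\to\Gamma_\alpha\to G\to1$ is a short exact sequence of t.d.l.c. groups with $\psi_\alpha$ open, $N_\alpha$ discrete, and conjugation inducing the natural $G$-action on $\pi_1(X_\alpha,v)$. This is the non-simply-connected counterpart of what Theorem~\ref{thm:compres} already arranges, and once it is in place both implications reduce to bookkeeping with Proposition~\ref{compprescolimit} and \cite[Proposition 8.A.10]{cdlh}. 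A secondary point requiring care is the functoriality of the maps $\Gamma_\alpha\to\Gamma_\beta$ and their compatibility with the direct system of fundamental groups, together with the reduction to connected $X_\alpha$ via the basepoint component.
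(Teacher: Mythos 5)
Your overall strategy coincides with the paper's: both arguments run through an extension $1 \to \pi_1(X_\alpha,v) \to G_\alpha \to G \to 1$ of t.d.l.c.\ groups with $G_\alpha$ compactly presented and $\pi_1(X_\alpha,v)$ discrete, and then reduce both implications to finite normal generation of the kernel together with the bookkeeping of \cite[Theorem 3.2]{brown:fp}. Your second and third paragraphs are sound modulo two small points: the vertex groups are open but need not be compact, so ``compact-open subgroups'' should read ``open subgroups''; and for $\Gamma_\alpha$ to have kernel exactly $\pi_1(X_\alpha,v)$ you must quotient the graph-of-groups fundamental group by the $2$-cell relations, so its compact presentability needs \cite[Proposition 8.A.10]{cdlh} (there are finitely many $G$-orbits of $2$-cells) on top of \cite[Propositions 8.B.9 and 8.B.10]{cdlh}.

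The genuine gap is exactly the one you flag: you never actually produce the topological extension, and you frame what is missing as ``the non-simply-connected counterpart of Theorem~\ref{thm:compres}.'' The paper's point is that no such counterpart is needed, because one can pass to the universal cover. Concretely, let $H_\alpha$ be the image of $G$ in $\Aut(X_\alpha)$ with the quotient topology, and let $K_\alpha$ be the group of homeomorphisms of the universal cover $\widetilde{X_\alpha}$ lying over elements of $H_\alpha$; this gives an abstract extension $0 \to \pi_1(X_\alpha) \to K_\alpha \to H_\alpha \to 0$, and since $H_\alpha$ acts on $X_\alpha$, hence on $\pi_1(X_\alpha)$, with open stabilisers, \cite[Proposition 8.B.4]{cdlh} supplies the unique t.d.l.c.\ topology on $K_\alpha$ making this a topological extension with $\pi_1(X_\alpha)$ discrete. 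Pulling back along $G \to H_\alpha$ yields $G_\alpha$, which acts on the \emph{simply connected} complex $\widetilde{X_\alpha}$ with compactly presented vertex stabilisers, compactly generated edge stabilisers and finite $2$-skeleton mod $G_\alpha$, so Theorem~\ref{thm:compres} applies verbatim and shows $G_\alpha$ is compactly presented. With that in place your two implications go through: the forward one as you describe (though the paper obtains finite normal generation of the discrete kernel directly from \cite[Proposition 8.A.10]{cdlh} rather than via Proposition~\ref{compprescolimit}(iii) --- both routes work), and the converse as in \cite[Theorem 3.2]{brown:fp}.
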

\begin{proof}
As in the proof of \cite[Theorem 3.2]{brown:fp}, we may reduce to the case where every $X_\alpha$ is connected. Write $H_\alpha$ for the image of $G$ in $\Aut(X_\alpha)$, with the quotient topology from $G$. Let $K_\alpha$ denote the group of homeomorphisms of the universal cover of $X_\alpha$ which are in the preimage of some element of the action of $H_\alpha$ on $X_\alpha$. Then one has a canonical short exact sequence
\begin{equation}\label{eq:ses2}
0 \to \pi_1(X_\alpha) \to K_\alpha \to H_\alpha \to 0
\end{equation}
of abstract groups, where $\pi_1(X_\alpha)$ maps to the deck transformations; since $H_\alpha$ acts on $X_\alpha$ (and hence $\pi_1(X_\alpha)$) with open stabilisers, it follows from \cite[Proposition 8.B.4]{cdlh} that there is a unique topology making $K_\alpha$ a t.d.l.c. group such that \ref{eq:ses2} is a short exact sequence of t.d.l.c. groups, with $\pi_1(X_\alpha)$ discrete.

Now we may pull back by the quotient $G \to H_\alpha$ to get a short exact sequence
\begin{equation}
0 \to \pi_1(X_\alpha) \to G_\alpha \to G \to 0
\end{equation}
of t.d.l.c. groups, and our hypotheses imply that $G_\alpha$ acts on the universal cover of $X_\alpha$ with compactly presented vertex stabilisers and compactly generated edge stabilisers. Therefore $G_\alpha$ is compactly presented by Theorem \ref{thm:compres}.

Finally, note that if $G$ is compactly presented, each $\pi_1(X_\alpha)$ is compactly normally generated in $G_\alpha$ (cf. \cite[Prop 8.A.10]{cdlh}), and discrete, hence finitely normally generated. Now the rest of the proof follows by the same argument as \cite[Theorem 3.2]{brown:fp}.
\end{proof}
As in \cite[Corollary 3.3]{brown:fp}, we immediately get:
\begin{cor}
\label{cor:highlyconnected}
Let $X$ be a contractible discrete $G$-CW-complex such that the stabiliser of every cell is of type $\F_\infty$. Let $\{X_j \}_{j \in \mathbb{N}}$ be a filtration of $X$ such that each $X_j$ is of finite type. Suppose that the connectivity of the pair $(X_{j+ 1}, X_j)$ tends to $\infty$ as $j$ tends to $\infty$. Then $G$ is of type $\F_\infty$.
\end{cor}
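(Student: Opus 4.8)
The plan is to deduce type $\F_\infty$ for $G$ by running the two Brown-type criteria already established, Theorem~\ref{thm:Brown1} and Theorem~\ref{thm:Brown3}, in parallel on the given complex $X$, and then converting the resulting homological conclusion into a homotopical one via Proposition~\ref{prop:FvsFP}; this is a faithful transcription of the argument for \cite[Corollary 3.3]{brown:fp}. The first step is to extract from the single connectivity hypothesis the two distinct triviality conditions that the criteria require. Since $X$ is contractible it is simply connected and $\widetilde{\mathrm H}_i(X;R)=0$ for all $i$. The assumption that the connectivity of $(X_{j+1},X_j)$ tends to $\infty$ means, via the long exact sequences of the pairs, that for each fixed $i$ the inclusion-induced maps $\pi_i(X_j,v)\to\pi_i(X_{j+1},v)$ and $\mathrm H_i(X_j,R)\to\mathrm H_i(X_{j+1},R)$ are isomorphisms for all sufficiently large $j$. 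As $X=\colim_j X_j$ and both homotopy and homology commute with this filtered colimit of CW-complexes, these stabilised values agree with $\pi_i(X,v)=0$ and $\widetilde{\mathrm H}_i(X;R)=0$. Hence, for each $i$, one has $\pi_i(X_j,v)=0$ and $\widetilde{\mathrm H}_i(X_j;R)=0$ for all large $j$. In particular the direct system $\{\pi_1(X_j,v)\}$ is essentially trivial, and the filtration $\{X_j\}$ is $\mathrm H$-essentially trivial in every dimension.

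Next I would apply the homological criterion. The complex $X$ is $n$-good over $R$ for every $n$: condition (G1) holds because $X$ is contractible, hence acyclic, and (G2) holds because every $p$-cell stabiliser is of type $\F_\infty$ and so of type $\FP_{n-p}$ over $R$ for all $p,n$. Each $X_j$ is of finite type, so the filtration is of finite $n$-type for every $n$. By the $\mathrm H$-essential triviality just established, Theorem~\ref{thm:Brown1} yields that $G$ is of type $\FP_n$ over $R$ for every finite $n$, i.e.\ of type $\FP_\infty$; taking $R=\Z$ gives type $\FP_\infty$ over $\Z$. In particular $G$ has type $\FP_1$, so $G$ is compactly generated.

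Finally I would feed these into the presentability criterion. The complex $X$ is simply connected; its vertex stabilisers are compactly presented and its edge stabilisers compactly generated, since type $\F_\infty$ implies type $\F_2$ and type $\F_1$ respectively; each $X_j$ has finite $2$-skeleton $\mod G$; $G$ is compactly generated by the previous step; and $\{\pi_1(X_j,v)\}$ is essentially trivial. Thus Theorem~\ref{thm:Brown3} shows $G$ is compactly presented. Since $G$ is now compactly presented and of type $\FP_\infty$ over $\Z$, Proposition~\ref{prop:FvsFP} upgrades this to type $\F_\infty$, as required. The only genuine content beyond invoking the two criteria is the opening step, namely teasing the $\mathrm H$-essential triviality and the essential triviality of the $\pi_1$-system out of the one connectivity assumption and checking compact generation so as to license Theorem~\ref{thm:Brown3}; I do not expect any serious obstacle here, as both triviality statements fall out immediately from the eventual vanishing of $\pi_i(X_j,v)$ and $\widetilde{\mathrm H}_i(X_j;R)$.
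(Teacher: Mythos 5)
Your argument is correct and is precisely the intended one: the paper proves this corollary simply by remarking that, as in Brown's original Corollary 3.3, it follows immediately from Theorem~\ref{thm:Brown1} and Theorem~\ref{thm:Brown3} (together with Proposition~\ref{prop:FvsFP} to pass from $\FP_\infty$ over $\Z$ plus compact presentability to $\F_\infty$). Your expansion of the preliminary step -- extracting eventual vanishing of $\pi_i(X_j,v)$ and $\widetilde{\mathrm H}_i(X_j;R)$ from the connectivity hypothesis and the colimit identifications -- is exactly the routine verification the paper leaves implicit.
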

\begin{ex} In the discrete case, this criterion has been used for example to prove that  Highman--Thompson groups \cite{brown:fp} and Brin--Thompson groups \cite{fluch} are finitely presented and of type $\FP_\infty$.  The Higman--Thompson families of groups $F_{qr}$ and $V_{qr}$ embed into certain tree almost automorphisms groups, denoted sometimes by $A^D_{qr}$, which are totally disconnected and locally compact. In \cite{sauert}, the t.d.l.c. groups $A^D_{qr}$ has been recently proved to be of type $\F_\infty$, and Corollary \ref{cor:highlyconnected} immediately gives an alternative proof of this fact. Indeed, \cite[4.5]{sauert} shows that the contractible proper smooth $A^D_{qr}$-CW-complex $\mathcal{Q}$, together with the filtration $\mathcal{Q}(k)$, satisfies the hypotheses of the corollary, and we get:
\end{ex}
\begin{cor}
The groups $A^D_{qr}$ have type $\F_\infty$.
\end{cor}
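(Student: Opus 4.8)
The plan is to apply Corollary~\ref{cor:highlyconnected} directly to the complex $\mathcal{Q}$ and the filtration $\{\mathcal{Q}(k)\}_{k\in\N}$ furnished by \cite[4.5]{sauert}, so that the entire proof reduces to checking the four hypotheses of that corollary for $G = A^D_{qr}$. No new geometric construction is needed on our side: the work is packaging the data of \cite{sauert} through our version of Brown's criterion.

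First I would recall that $\mathcal{Q}$ is a contractible proper smooth $A^D_{qr}$-CW-complex, which in the terminology of this paper is exactly a contractible proper discrete $G$-CW-complex. Because $\mathcal{Q}$ is proper, every cell stabiliser $G_\sigma$ is a compact open subgroup of $G$, hence profinite. Such a group has type $\F_\infty$: the one-point space with trivial action is a contractible proper discrete $G_\sigma$-CW-complex with a single orbit in dimension $0$ and none in higher dimensions, so it is of type $F_\infty$. This settles the cell-stabiliser hypothesis of Corollary~\ref{cor:highlyconnected}.

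Next I would verify that each $\mathcal{Q}(k)$ is $G$-cocompact in every dimension, i.e.\ of finite type, and that the connectivity of the pairs $(\mathcal{Q}(k+1),\mathcal{Q}(k))$ tends to $\infty$ as $k\to\infty$. Both of these are precisely the statements established in \cite[4.5]{sauert} by the Morse-theoretic analysis of the descending links in $\mathcal{Q}$. With the contractibility, the $\F_\infty$ cell stabilisers, the finite-type filtration, and the growing connectivity of the pairs all in hand, the hypotheses of Corollary~\ref{cor:highlyconnected} are met, and the corollary immediately yields that $A^D_{qr}$ has type $\F_\infty$.

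The only genuinely substantial input is the connectivity estimate for the pairs, which is where the hard combinatorial and topological work lives; but this is exactly the content imported from \cite{sauert}, so for us it is a matter of quoting it correctly. In effect, Corollary~\ref{cor:highlyconnected} lets us recover the $\F_\infty$-result of \cite{sauert} through our analogue of Brown's criterion, giving the alternative proof advertised in the preceding example.
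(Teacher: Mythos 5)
Your proposal is correct and follows essentially the same route as the paper: both apply Corollary~\ref{cor:highlyconnected} to the contractible proper smooth $A^D_{qr}$-CW-complex $\mathcal{Q}$ with the filtration $\mathcal{Q}(k)$ from \cite[4.5]{sauert}, importing the finite-type and connectivity statements from there and noting that the compact open cell stabilisers are trivially of type $\F_\infty$. The only difference is that you spell out the (correct) verification of the stabiliser hypothesis, which the paper leaves implicit.
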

\begin{rem}
All the results stated in this section continue to hold when $X$ is a discrete $G$-complex, i.e., $G$-actions with inversions are allowed.
\end{rem}

\section{Quasi-isometric invariance}\label{s:qii}
\subsection{Large-scale language} Let $X$ be a set.  A {\it pseudo-metric} on  $X$ is a map
$d\colon X\times X\to[0,+\infty[$
such that $d(x,x)=0$, $d(x,x')=d(x',x)$, and $d(x,x'')\leq d(x,x')+d(x',x'')$ for all $x,x',x''\in X$. A {\it pseudo-metric space} is a pair $(X,d)$ consisting of a set $X$ and a pseudo-metric $d$ on $X$.
A map $f\colon (X,d)\to (X',d')$ of pseudo-metric spaces is said to be {\it large-scale Lipschitz} if there exist $\mu>0$ and $\alpha\geq 0$ satisfying
$d'(f(x_1),f(x_2))\leq \mu\cdot d(x_1,x_2)+\alpha, \forall x_1,x_2\in X.$
Maps $f,g\colon X\to X'$ are said to be {\it close} (or {\em at bounded distance}) if $$\sup_{x\in X}d'(f(x),g(x))\leq\infty,$$
and they are denoted by $f\sim f'$. A {\it quasi-isometry} between pseudo-metric spaces $X$ and $X'$ is a large-scale Lipschitz map $f \colon X \to X'$ together with a large-scale Lipschitz map $f'\colon X' \to X$ such that $$f\circ f'\sim \iid_{X'}\quad\text{and}\quad f'\circ f\sim \iid_X;$$ the map $f'$ is called an {\it inverse quasi-isometry to $f$}.
Two pseudo-metric spaces are {\it quasi-isometric} if there exists a quasi-isometry from one to the other (and thus conversely).

More generally, $X$ is a {\it quasi-retract} of $X'$, denoted by $X\preceq X'$, if there exist large-scale Lipschitz maps $X\stackrel{i}{\to} X' \stackrel{r}{\to} X$ such that $r\circ i\sim\iid_X$. The pair $(i,r)$ is called a {\it quasi-retraction} of $X'$ to $X$.
\begin{fact} The following properties hold:
\begin{enumerate}
\item if $X$ is quasi-isometric to $X'$ than $X\preceq X'$ and $X'\preceq X$;
\item if $X\preceq X'$ and $X'\preceq X''$, then $X\preceq X''$;
\item quasi-retraction is preserved by quasi-isometries.
\end{enumerate}
\end{fact}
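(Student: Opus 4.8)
The plan is to reduce all three statements to two elementary observations about large-scale Lipschitz maps and the closeness relation $\sim$, after which each claim becomes a short formal manipulation. First I would record the two facts that do the real work: (i) a composite $g\circ f$ of large-scale Lipschitz maps is again large-scale Lipschitz, with constants obtained by composing the respective pairs $(\mu,\alpha)$; and (ii) the relation $\sim$ is an equivalence relation on maps $X\to X'$ which is stable under precomposition by an arbitrary map and under postcomposition by a large-scale Lipschitz map. The only point requiring attention in (ii) is the postcomposition clause: if $f\sim g$ and $h$ is large-scale Lipschitz with constants $(\mu,\alpha)$, then $d'(hf(x),hg(x))\le \mu\, d(f(x),g(x))+\alpha\le \mu\sup_x d(f(x),g(x))+\alpha$, which is finite; precomposition is immediate since $\sup_x d(fk(x),gk(x))\le\sup_y d(f(y),g(y))$.

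Granting these, claim (1) is just a matter of relabelling. If $(f,f')$ is a quasi-isometry between $X$ and $X'$ with $f'\circ f\sim\iid_X$ and $f\circ f'\sim\iid_{X'}$, then setting $(i,r)=(f,f')$ exhibits $X\preceq X'$, while $(i,r)=(f',f)$ exhibits $X'\preceq X$; in each case both maps are large-scale Lipschitz by hypothesis and the required composite is close to the identity by definition.

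For claim (2) I would take quasi-retractions $X\stackrel{i_1}{\to}X'\stackrel{r_1}{\to}X$ and $X'\stackrel{i_2}{\to}X''\stackrel{r_2}{\to}X'$ with $r_1\circ i_1\sim\iid_X$ and $r_2\circ i_2\sim\iid_{X'}$, and form $i=i_2\circ i_1$ and $r=r_1\circ r_2$. By (i) these are large-scale Lipschitz, and I would compute $r\circ i=r_1\circ(r_2\circ i_2)\circ i_1$. Precomposing $r_2\circ i_2\sim\iid_{X'}$ by $i_1$ gives $(r_2\circ i_2)\circ i_1\sim i_1$, and postcomposing by the large-scale Lipschitz map $r_1$ gives $r_1\circ(r_2\circ i_2)\circ i_1\sim r_1\circ i_1\sim\iid_X$; transitivity of $\sim$ then yields $r\circ i\sim\iid_X$, so $X\preceq X''$.

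Finally, claim (3) is a formal consequence of (1) and (2): if $X\preceq X'$ while $X$ is quasi-isometric to $Y$ and $X'$ is quasi-isometric to $Y'$, then (1) gives $Y\preceq X$ and $X'\preceq Y'$, and applying (2) along the chain $Y\preceq X\preceq X'\preceq Y'$ gives $Y\preceq Y'$. There is no genuine obstacle here; the only place demanding care is the postcomposition step in (2), where one must invoke that $r_1$ is large-scale Lipschitz -- rather than merely a map -- in order to keep the supremum distance finite.
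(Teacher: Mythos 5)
Your proposal is correct. The paper states this as a \emph{Fact} and offers no proof (these are standard coarse-geometry observations), and your argument --- composites of large-scale Lipschitz maps are large-scale Lipschitz, $\sim$ is an equivalence relation stable under precomposition by arbitrary maps and postcomposition by large-scale Lipschitz maps, then formal bookkeeping --- is exactly the routine verification the authors are implicitly relying on.
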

\begin{ex}[T.d.l.c. groups as pseudo-metric spaces]
By  \cite[Prop. 1.D.2]{cdlh}, every compactly generated locally compact group can be regarded as a pseudo-metric space, well-defined up to quasi-isometry. 

Whenever $G$ is totally disconnected we can consider $X$ to be a Cayley-Abels graph of $G$ and $d_X$ the combinatorial metric on $X$. The natural action of $G$ on $X$ is geometric and, therefore, it induces a geodesically adapted pseudo-metric $d_G$ on $G$ such that the orbit map $(G,d_G)\to (X,d_X)$ is a quasi-isometry (cf. \cite[Thm. 4.C.5]{cdlh}). In particular, we deduce that two compactly generated t.d.l.c. groups $ G$ and $H$ are {\it quasi-isometric} if some/any Cayley-Abels graph for $G$ and $H$, respectively, are quasi-isometric. Analogously for quasi-retraction.
\end{ex}

\subsection{The Rips' complex of a compactly generated t.d.l.c. group}
Let $G$ be a compactly generated t.d.l.c. group and $\Gamma$ a Cayley-Abels graph of $G$. For every positive integer $d$, the {\it Rips' complex $P_d(\Gamma)$ of $\Gamma$} is the simplicial complex whose $n$-simplices correspond to $(n + 1)$-tuple $\{v_0,v_1,...,v_n\}$ of vertices of $\Gamma$ satisfying
$$d(v_i,v_j) \leq d\quad\text{for}\ 0 \leq i < j \leq n.$$ Since $\Gamma$ is a connected graph of bounded valency such that the $G$-action is continuous, proper and vertex-transitive, one has the following properties.
\begin{fact}\label{fact:rips} Let $G$ be a compactly generated t.d.l.c. group and $\Gamma$ a Cayley-Abels graph associated to $G$. Then
\begin{enumerate}[(a)]
\item $P_d(\Gamma)$ is locally finite and finite dimensional for all $d$;
\item $G$ acts simplicially and cocompactly on $P_d(\Gamma)$;
\item $G$ acts transitively on the vertices of $P_d(\Gamma)$;
\item the stabiliser of any simplex in $P_d(\Gamma)$ is compact and open in $G$;
\item the simplicial complex $P_\infty(\Gamma)=\bigcup_d P_d(\Gamma)$ is contractible.
\end{enumerate}
\end{fact}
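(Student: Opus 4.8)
The plan is to establish the five assertions in the order (c), (a), (b), (d), (e). The reason is that transitivity on vertices is needed for the cocompactness statement in (b), and the single geometric input driving (a), (b) and (d) is the elementary fact that in a locally finite graph every ball of finite radius is finite.

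I would begin with (c), which is immediate: the $0$-simplices of $P_d(\Gamma)$ are precisely the vertices of $\Gamma$, and $G$ acts transitively on $V(\Gamma)$ by the definition of a Cayley--Abels graph. For (a), fix a basepoint $v$; since $\Gamma$ has bounded valency the ball $B(v,d)=\{w\mid d(v,w)\leq d\}$ is finite, with cardinality bounded by some $N=N(d)$ uniformly in $v$ by vertex-transitivity. Any simplex through $v$ has all its vertices in $B(v,d)$, which gives local finiteness at once and bounds the number of vertices of a simplex by $N$, hence its dimension by $N-1$.

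For (b) I would note first that $G$ acts on $\Gamma$ by graph automorphisms, hence isometrically for the combinatorial metric, so it carries simplices to simplices and the action is simplicial. Cocompactness then follows by combining (c) and (a): every simplex is $G$-equivalent to one containing $v$, and there are only finitely many of those, so there are finitely many orbits of simplices. For (d), the pointwise stabiliser of a simplex $\sigma=\{v_0,\dots,v_n\}$ is the finite intersection $\bigcap_i\stab_G(v_i)$ of vertex stabilisers lying in $\CO(G)$, hence compact and open; the setwise stabiliser $\stab_G(\sigma)$ contains this pointwise stabiliser with finite index, its image in the symmetric group on the vertices of $\sigma$ being finite, so as a finite union of cosets of a compact open subgroup it is again compact and open.

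Finally, for (e) the key observation is that $P_\infty(\Gamma)$ is the full simplicial complex on $V(\Gamma)$: connectedness of $\Gamma$ makes all pairwise distances finite, so any finite set of vertices is a simplex of $P_d(\Gamma)$ for $d$ large, and hence every finite subset of $V(\Gamma)$ is a face. The geometric realisation of a full simplex is convex in the space of finitely supported barycentric coordinates, so the straight-line homotopy to any fixed vertex $v_0$ contracts it, remaining inside the complex because the support of the interpolated point is always a face. The main point requiring care is part (d): one must verify that the setwise stabiliser, and not merely the pointwise stabiliser, is compact and open, which the finite-index argument delivers; a secondary care-point is the continuity of the straight-line contraction in (e) for the weak topology, which is standard. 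Neither constitutes a serious obstacle.
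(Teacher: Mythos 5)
Your proof is correct and is essentially the argument the paper has in mind: the paper states this as a Fact without proof, justifying it only by the sentence preceding it (connectedness, bounded valency, and a continuous, proper, vertex-transitive action), which are exactly the inputs your argument uses for (a)--(e). The two points you flag as needing care -- that the \emph{setwise} stabiliser is compact open because it contains the pointwise stabiliser with finite index, and that $P_\infty(\Gamma)$ is the full complex on $V(\Gamma)$ and hence contractible by the straight-line homotopy -- are handled correctly.
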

\begin{cor}
Let $\Gamma$ be a Cayley-Abels graph of a compactly generated t.d.l.c. group $G$. The discrete $G$-CW-complex $|P_\infty(\Gamma)|$ is an $n$-good $G$-complex for all $n\geq 1$. Moreover, $\{|P_d(\Gamma)|\}_d$ is a filtration of $|P_\infty(\Gamma)|$ of finite $n$-type for all $n\geq1$.
\end{cor}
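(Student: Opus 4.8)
The plan is to verify the two defining conditions of $n$-goodness directly from the properties assembled in Fact~\ref{fact:rips}, and then to read off finite $n$-type of the filtration from cocompactness. First I would record that $|P_\infty(\Gamma)|$ really is a proper discrete $G$-CW-complex: it is the topological realisation of the simplicial complex $P_\infty(\Gamma)=\bigcup_d P_d(\Gamma)$, on which $G$ acts simplicially with compact open simplex stabilisers by Fact~\ref{fact:rips}(b) and (d), so it is a proper discrete $G$-CW-complex exactly as in the Example of \S\ref{ss:CW}.

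For condition (G1) I would simply invoke Fact~\ref{fact:rips}(e): since $P_\infty(\Gamma)$ is contractible, the reduced homology $\widetilde{\mathrm H}_i(|P_\infty(\Gamma)|;R)$ vanishes in every dimension $i$, and in particular for $i<n$, for every $n\geq1$. For condition (G2) the key observation is that every cell of $P_\infty(\Gamma)$ is a simplex of some $P_d(\Gamma)$, whose stabiliser $G_\sigma$ is compact and open by Fact~\ref{fact:rips}(d), hence a profinite group. I would then note that a compact t.d.l.c. group $U$ has type $\FP_\infty$ over any ring $R$: the trivial module $R=R[U/U]$ is itself a finitely generated proper discrete permutation module, so the length-$0$ resolution given by $\mathrm{id}\colon R[U/U]\to R$ exhibits $R$ of type $\FP$ over $R$ (equivalently, one may combine ``profinite groups have type $\KP_\infty$'' with Theorem~\ref{fpnkpn}). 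Consequently $G_\sigma$ has type $\FP_{n-p}$ for every $n$ and every $p$ with $0\leq p\leq n$, so (G2) holds for all $n$, and $|P_\infty(\Gamma)|$ is $n$-good for all $n\geq1$.

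I expect this second step to be the only delicate one, and it is the step I would flag as the main point: it is precisely here that \emph{compactness} of the stabilisers (not merely openness) is used, and it is also the single place where arbitrariness of the coefficient ring $R$ matters. For a non-compact cell stabiliser, or without the identification of the trivial module with a proper discrete permutation module, the argument would break down; everything surrounding it is formal.

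Finally, for the filtration statement I would observe that enlarging the distance bound only adds simplices, so $P_d(\Gamma)\subseteq P_{d+1}(\Gamma)$ and $\bigcup_d P_d(\Gamma)=P_\infty(\Gamma)$; thus $\{|P_d(\Gamma)|\}_d$ is an increasing filtration of $|P_\infty(\Gamma)|$ by $G$-subcomplexes. Each $|P_d(\Gamma)|$ is finite dimensional and locally finite by Fact~\ref{fact:rips}(a), and $G$ acts cocompactly on it by Fact~\ref{fact:rips}(b); cocompactness of a finite dimensional, locally finite complex forces $G$ to have only finitely many orbits of cells in each dimension. Hence each $|P_d(\Gamma)|$ is of type $F_\infty$, and a fortiori of type $F_n$ for every $n$, which is exactly the assertion that the filtration has finite $n$-type for all $n\geq1$.
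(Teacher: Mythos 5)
Your proof is correct and is essentially the paper's own argument, which simply cites Fact~\ref{fact:rips} together with the observation that compact totally disconnected groups have type $\FP_\infty$; you have merely written out the routine verifications of (G1), (G2) and finite $n$-type in full. You also correctly isolate the one non-formal ingredient, namely that compactness of the simplex stabilisers is what makes (G2) hold over an arbitrary ring $R$.
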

\begin{proof}
It is a direct consequence of Fact~\ref{fact:rips} (recall that every totally disconnected compact group is of type $\FP_\infty$).
\end{proof}
\begin{thm}\label{thm:quasiretract}
Let $G, G'$ be compactly generated t.d.l.c. groups. Suppose that $G$ is a quasi-retract of $G'$ and let $n\geq 2$. Then
\begin{enumerate}
\item if $G'$ is of type $\FP_n$ over $\RR$, then so is $G$;
\item if $G'$ is of type $\F_n$, then so is $G$.
\end{enumerate}
\end{thm}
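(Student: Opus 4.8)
The plan is to realise the finiteness conditions geometrically via Rips' complexes and then apply Brown's criteria, transferring the relevant essential triviality from $G'$ to $G$ along the quasi-retraction. First I would replace the groups by their Cayley-Abels graphs: fixing Cayley-Abels graphs $\Gamma$ of $G$ and $\Gamma'$ of $G'$, the orbit maps are quasi-isometries $G\to\Gamma$ and $G'\to\Gamma'$, and since quasi-retraction is preserved by quasi-isometries, $\Gamma$ is a quasi-retract of $\Gamma'$. This yields large-scale Lipschitz maps $i\colon\Gamma\to\Gamma'$ and $r\colon\Gamma'\to\Gamma$ on vertex sets with $r\circ i\sim\iid_\Gamma$. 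Recall from Fact~\ref{fact:rips} and the preceding corollary that $|P_\infty(\Gamma)|$ and $|P_\infty(\Gamma')|$ are $n$-good discrete $G$- and $G'$-CW-complexes, filtered by $\{|P_d(\Gamma)|\}_d$ and $\{|P_d(\Gamma')|\}_d$ of finite $n$-type.

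Next I would record the key geometric input. A large-scale Lipschitz map with constants $\mu,\alpha$ carries vertices at distance $\leq d$ to vertices at distance $\leq\mu d+\alpha$, hence induces a simplicial map $P_d(\Gamma)\to P_{\mu d+\alpha}(\Gamma')$, and likewise for $r$; moreover two vertex maps at bounded distance induce simplicial maps that become homotopic once the scale is enlarged enough for their images to jointly span simplices. Since $r\circ i\sim\iid_\Gamma$, for each scale $a$ I obtain a composite $P_a(\Gamma)\to P_b(\Gamma')\to P_c(\Gamma)$ induced on vertices by $r\circ i$, which is homotopic to the inclusion $P_a(\Gamma)\hookrightarrow P_{c'}(\Gamma)$ for a suitably larger scale $c'$. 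Note that $i$ and $r$ need not be equivariant, but this causes no difficulty since Brown's criterion only involves the ordinary homology (or $\pi_1$) of the filtration pieces.

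For part (1), assume $G'$ is of type $\FP_n$ over $\RR$. By Brown's criterion (Theorem~\ref{thm:Brown1}) the filtration $\{|P_d(\Gamma')|\}_d$ is $\mathrm{H}$-essentially trivial in dimensions $1\leq k<n$. Given a scale $a$, I would push a class in $H_k(P_a(\Gamma))$ through $i_*$ into $H_k(P_b(\Gamma'))$, choose $b'\geq b$ so that its image vanishes in $H_k(P_{b'}(\Gamma'))$, and then send it along $r_*$ into $H_k(P_{c'}(\Gamma))$ for suitably large $c'$; the resulting composite is zero. By the homotopy above this composite equals the inclusion-induced map $H_k(P_a(\Gamma))\to H_k(P_{c'}(\Gamma))$, which is therefore zero for all $1\leq k<n$. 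Thus $\{|P_d(\Gamma)|\}_d$ is $\mathrm{H}$-essentially trivial in these dimensions, and Brown's criterion gives that $G$ is of type $\FP_n$ over $\RR$.

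For part (2), since $G'$ is of type $\F_n$ with $n\geq2$ it is in particular compactly presented, so by Theorem~\ref{thm:Brown3} the direct system $\{\pi_1(|P_d(\Gamma')|)\}$ is essentially trivial; running the same transfer argument with $\pi_1$ in place of $H_k$ shows $\{\pi_1(|P_d(\Gamma)|)\}$ is essentially trivial, whence $G$ is compactly presented. Applying part (1) with $\RR=\Z$ gives that $G$ is of type $\FP_n$ over $\Z$, and Proposition~\ref{prop:FvsFP} then upgrades this to type $\F_n$. I expect the main obstacle to lie in the scale bookkeeping of the second paragraph -- verifying that large-scale Lipschitz and bounded-distance maps induce simplicial maps, respectively homotopies, at controlled scales compatibly with the filtrations -- together with the usual care over basepoints and connectedness needed to run the $\pi_1$-version of the transfer in part (2).
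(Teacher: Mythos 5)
Your argument is correct and is essentially the paper's own proof: the paper simply states that the proof of Alonso's Theorem 8 transfers verbatim, and that proof is exactly the Rips-complex/Brown's-criterion transfer of essential triviality along the quasi-retraction that you have written out, using the ingredients (Fact~\ref{fact:rips}, Theorems~\ref{thm:Brown1} and~\ref{thm:Brown3}, Proposition~\ref{prop:FvsFP}) the paper sets up for this purpose.
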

\begin{proof} The proof of \cite[Theorem 8]{alonso} can be transferred verbatim. \end{proof}
\begin{cor} Let $G$ be a compactly generated t.d.l.c. group and $H$ a closed subgroup. If $H$ is a group retract, then
\begin{enumerate}
\item $G$ is of type $\FP_n$ over $\RR$, then so is $H$;
\item $G$ is of type $\F_n$, then $H$ is of type $\F_n$.
\end{enumerate}
\end{cor}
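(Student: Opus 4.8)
The plan is to recognise the algebraic group retraction as a quasi-retraction of pseudo-metric spaces, and then to apply Theorem~\ref{thm:quasiretract}. Write $i\colon H\hookrightarrow G$ for the inclusion and $\rho\colon G\to H$ for the continuous retraction, so that $\rho\circ i=\iid_H$. First I would note that $H=\rho(G)$ is compactly generated, being the continuous homomorphic image of the compactly generated group $G$; hence both $G$ and $H$ carry Cayley--Abels (equivalently, word) pseudo-metrics, well defined up to quasi-isometry.

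The crucial point is that $i$ and $\rho$ are large-scale Lipschitz for these pseudo-metrics. This relies on the standard fact that any continuous homomorphism $\phi$ between compactly generated locally compact groups is large-scale Lipschitz (see \cite{cdlh}): if $S$ is a compact generating set of the source and $T$ one of the target, then $\phi(S)$ is compact and the word-length function is bounded on compact sets, so $\phi(S)\subseteq T^N$ for some $N$; the homomorphism property then gives $d_T(\phi(x),\phi(y))\le N\,d_S(x,y)$. Applying this to $i$ and to $\rho$ yields the claim. Since moreover $\rho\circ i=\iid_H$ holds on the nose, in particular $\rho\circ i\sim\iid_H$, so the pair $(i,\rho)$ is a quasi-retraction of $G$ onto $H$ and therefore $H\preceq G$.

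For $n\ge 2$ the two conclusions are now immediate from Theorem~\ref{thm:quasiretract}, applied with the distinguished quasi-retract taken to be $H$ and the ambient group taken to be $G$: type $\FP_n$ over $\RR$ (respectively, type $\F_n$) passes from $G$ to $H$. The cases $n\le 1$ require no hypothesis on $G$ beyond compact generation, since $H$ is already seen to be compactly generated and hence of type $\F_1$ and $\FP_1$. I do not anticipate a serious obstacle: the only step needing care is the verification that the purely group-theoretic retraction is simultaneously a coarse retraction, that is, that $i$ and $\rho$ are compatible with the large-scale geometry; once the large-scale Lipschitz property of continuous homomorphisms is granted, the corollary is a direct specialisation of Theorem~\ref{thm:quasiretract}.
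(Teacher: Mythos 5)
Your argument is correct and is exactly the (implicit) proof intended in the paper: the corollary is stated as an immediate consequence of Theorem~\ref{thm:quasiretract}, the only content being that a continuous group retraction $\rho\colon G\to H$ with $\rho\circ i=\iid_H$ is a quasi-retraction for the word pseudo-metrics, which follows since continuous homomorphisms of compactly generated locally compact groups are large-scale Lipschitz. Your handling of the low cases $n\le 1$ via compact generation of $H=\rho(G)$ is also the right observation.
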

\begin{cor}\label{cor:qi}
Let $G$ and $G'$ be quasi-isometric compactly generated t.d.l.c. groups. Then
\begin{enumerate}
\item $G$ is of type $\FP_n$ over $\RR$ if and only if $G'$ is of type $\FP_n$ over $\RR$.
\item $G$ is of type $\F_n$, then $H$ is of type $\F_n$.
\end{enumerate}
\end{cor}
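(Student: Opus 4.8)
The plan is to deduce this at once from Theorem~\ref{thm:quasiretract}, exploiting the symmetry of the quasi-isometry relation. The key point is recorded in the Fact preceding the Rips' complex discussion: if $G$ and $G'$ are quasi-isometric, then both $G \preceq G'$ and $G' \preceq G$ hold, so each of the two groups is a quasi-retract of the other.

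For part (1) with $n \geq 2$, suppose first that $G'$ has type $\FP_n$ over $\RR$. Applying Theorem~\ref{thm:quasiretract}(1) to the quasi-retraction $G \preceq G'$ shows that $G$ has type $\FP_n$ over $\RR$. Conversely, if $G$ has type $\FP_n$, then applying the same statement to $G' \preceq G$ shows that $G'$ has type $\FP_n$. This yields the claimed equivalence. Part (2) is obtained verbatim, using Theorem~\ref{thm:quasiretract}(2) in place of (1), and establishes that $G$ has type $\F_n$ if and only if $G'$ does.

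It remains to dispose of the cases $n = 0$ and $n = 1$, which lie outside the range $n \geq 2$ of Theorem~\ref{thm:quasiretract}; these are trivial under the standing hypotheses. Indeed, since $G$ and $G'$ are compactly generated, both have type $\F_1$ by Proposition~\ref{prop:typeF2}, hence type $\FP_1$ and a fortiori types $\F_0$ and $\FP_0$, by the implication that type $\F_k$ entails type $\FP_k$. Thus for $n \leq 1$ both sides of each equivalence are automatically true for both groups. I do not anticipate any genuine obstacle here, as the corollary is a formal consequence of the quasi-retract theorem combined with the symmetry of quasi-isometry; the only point requiring a little care is precisely this bookkeeping over the small values of $n$.
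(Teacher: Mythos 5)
Your proposal is correct and follows exactly the route the paper intends: the corollary is stated without proof precisely because it is the immediate consequence of Theorem~\ref{thm:quasiretract} applied in both directions via the mutual quasi-retractions $G \preceq G'$ and $G' \preceq G$. Your explicit handling of the cases $n \leq 1$ via Proposition~\ref{prop:typeF2} is a sensible piece of bookkeeping that the paper leaves implicit.
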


\subsection{Uniform lattices}
A {\it uniform lattice} $\Gamma$ in $G$ is a cocompact discrete subgroup of $G$. For example, given a finitely generated (discrete) group $H$ and a Cayley graph $X$ for $H$, then $H$ is a uniform lattice in $\textup{Aut}(X)$, which is a (not necessarily discrete) t.d.l.c. group  (see \cite[Proposition 2.3]{moller}).

\medskip

Let $G$ be a t.d.l.c. group and $\Gamma$ a uniform lattice in $G$. By \cite[Prop.~5.C.3]{cdlh}, $G$ is compactly generated if and only if $\Gamma$ is finitely generated. Moreover, the natural inclusion $(\Gamma,\textup d_\Gamma)\subseteq (G,\textup d_G)$ is a quasi-isometry whenever $G$ is compactly generated. Therefore one deduces the following generalisation as a consequence of Corollary~\ref{cor:qi}.
\begin{prop}\label{prop:ulattice}
Let $\Gamma$ be a uniform lattice of $G$. Then $G$ is of type $\FP_n$ (resp. $\F_n$) if and only if the discrete group $\Gamma$ is of type $\FP_n$ (resp. $\F_n$).
\end{prop}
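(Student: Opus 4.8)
The plan is to read the statement off Corollary~\ref{cor:qi}, using the two facts recorded just above: that $G$ is compactly generated exactly when $\Gamma$ is finitely generated (\cite[Prop.~5.C.3]{cdlh}), and that in that case the inclusion $(\Gamma,\textup d_\Gamma)\hookrightarrow(G,\textup d_G)$ is a quasi-isometry. A finitely generated discrete group is, with its discrete topology, a compactly generated t.d.l.c. group, since a finite generating set is compact; so whenever $\Gamma$ is finitely generated the pair $(\Gamma,G)$ consists of two \emph{quasi-isometric compactly generated} t.d.l.c. groups, and Corollary~\ref{cor:qi} applies directly.

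First I would dispose of the degenerate ranges of $n$. The case $n=0$ is vacuous, as every group has type $\FP_0$ and $\F_0$. For $n=1$ the two sides are precisely finite generation of $\Gamma$ and compact generation of $G$, equivalent by \cite[Prop.~5.C.3]{cdlh}. If $\Gamma$ fails to be finitely generated (equivalently $G$ fails to be compactly generated) then, since $\FP_n\Rightarrow\FP_1$ and $\F_n\Rightarrow\F_1$, neither group has type $\FP_n$ or $\F_n$ for any $n\ge 1$ and the equivalence holds trivially. This leaves the case $n\ge 2$ with $\Gamma$ finitely generated, where Corollary~\ref{cor:qi} yields at once that $G$ has type $\FP_n$ over $\RR$ if and only if $\Gamma$ does, and likewise for type $\F_n$.

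What remains is to match the finiteness properties of the \emph{discrete group} $\Gamma$ appearing in Corollary~\ref{cor:qi} (that is, the type $\FP_n/\F_n$ of $\Gamma$ regarded as a t.d.l.c. group) with the classical ones. Here a compact open subgroup of a discrete group is just a finite subgroup, so the proper discrete permutation modules over $\Gamma$ are the permutation modules $\RR[\Gamma/F]$ with $F$ finite; free modules are of this form, and each $\RR[\Gamma/F]=\ind^\Gamma_F\RR$ is of classical type $\FP_\infty$ because finite groups are and induction preserves $\FP_\infty$. Feeding this into Corollary~\ref{cor:FPn} shows that the paper's $\FP_n$ and the classical $\FP_n$ agree for $\Gamma$, and the $\F_n$-direction ``classical $\Rightarrow$ paper's'' is immediate because a free action is in particular proper. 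The one genuinely delicate point, which I expect to be the main obstacle, is the reverse identification for $\F_n$ when $\Gamma$ has torsion, where the paper allows finite cell-stabilisers while the classical notion is phrased via free actions; I expect to handle this by combining the $\FP_n$-identification just made with finite presentability coming from Theorem~\ref{thm:Brown2}, upgrading a proper cocompact action to classical type $F_n$ through the standard ``$\FP_n$ plus finite presentation'' criterion.
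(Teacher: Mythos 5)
Your proposal is correct and follows essentially the same route as the paper, which likewise deduces the proposition from Corollary~\ref{cor:qi} together with the facts that $G$ is compactly generated iff $\Gamma$ is finitely generated and that the inclusion $(\Gamma,d_\Gamma)\hookrightarrow(G,d_G)$ is then a quasi-isometry. The extra care you take over the cases $n\le 1$ and over identifying the paper's $\FP_n$/$\F_n$ notions for the discrete group $\Gamma$ with the classical ones is left implicit in the paper but is sound; for the last step (upgrading to classical $F_n$ from finite presentability plus $\FP_n$) the cleaner reference is Proposition~\ref{prop:FvsFP} together with Proposition~\ref{prop:typeF2} rather than Theorem~\ref{thm:Brown2}.
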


This provides a new technique to investigate finiteness properties of both t.d.l.c. groups and discrete groups. On the other hand, it is worth remarking that there exist t.d.l.c. groups that not contain any uniform lattice, indeed any lattice at all  (e.g., non-unimodular t.d.l.c. groups and Neretin's group).

\begin{ex}
\begin{enumerate}[(i)]
\item Every compactly generated abelian t.d.l.c. group $G$ is of type $\FP$. Indeed, $G=\Z^m\times K$ with $K$ compact, i.e., $\Z^m$ is a uniform lattice in $G$. For $K$ compact and normal, $\ccd_\Q(G)=\ccd_\Q(\Z^m)=m$.
\item If the automorphism group $G$ of a locally finite tree contains a uniform lattice, then $G$ is of type $\FP_\infty$; see \cite[5.C.11]{cdlh}. More generally, every compactly generated unimodular t.d.l.c. group whose rough Cayley graph is quasi-isometric to a tree is of type $\F_\infty$; see \cite[Theorem 3.28]{km}. In particular, we know these groups to be rational duality t.d.l.c. groups; compare with \cite{it:ratdiscoh}.
\item Let $\Gamma$ be a discrete group of type $\FP_n$ or $\F_n$, with $n>0$. Then the automorphism group of a Cayley graph of $\Gamma$ is a (possibly discrete) t.d.l.c. group of the same type.
\end{enumerate}
\end{ex}
\begin{rem}
Proposition~\ref{prop:ulattice} gives a promising new approach to answering Question~2 in \cite{it:ratdiscoh}, by using suitable examples of discrete groups of type $\FP_n$ but not of type $\F_n$.
\end{rem}

\section{T.d.l.c. graph-wreath products}
\label{s:gwp}

\subsection{Polyhedral products and graph-wreath products}

A wreath product is defined for t.d.l.c. groups in \cite{Cornulier}: for $B$ and $H$ two t.d.l.c. groups, $A$ a compact open subgroup of $B$, and $X$ a discrete space on which $H$ acts continuously, the wreath product $B \wr^A_X H$ is defined to be the semidirect product, by $H$, of the semi-restricted power
\begin{equation}
B^{X,A} = \{f \in B^X: f(x) \in A \text{ for all but finitely many } x\}.
\end{equation}
There is a unique t.d.l.c. group topology on $B^{X,A}$ making the obvious embedding of $A^X$ a topological isomorphism onto a compact open subgroup: this defines a t.d.l.c. topology on the wreath product.

Here we define graph-wreath products, analogously to the graph-wreath products of abstract groups defined in \cite{KropMart}, of which the wreath product is a special case; this generality will make the proof of Theorem \ref{F2necessary} much easier. We fix here notation for the building blocks of a graph-wreath product that will be used throughout this section. Suppose $B$ is a t.d.l.c. group with compact open subgroup $A$, and $\Gamma$ is a graph with vertices $V$. We define the polyhedral product, $B^{\Gamma,A}$, as follows. As an abstract group, $B^{\Gamma,A}$ is the free product of the product $A^V$, and a copy $B_v$ of $B$, for each $v \in V$, subject to:
\begin{enumerate}[(i)]
\item if $(a_v) \in A^V$ with $a_v=1$, for all $v \neq w$, then $(a_v)$ is identified with $a_w \in B_w$;
\item if $(a_v) \in A^V$ with $a_w=1$, some $w$, and $g \in B_w$, then $g$ commutes with $(a_v)$;
\item if $v_1,v_2 \in V$ are joined by an edge in $\Gamma$, $g \in B_{v_1}$, and $h \in B_{v_2}$, then $gh = hg$.
\end{enumerate}

Now, if $H$ is a t.d.l.c. group with a continuous discrete action on $\Gamma$, we define the graph-wreath product $B \gwr_\Gamma^A H$ to be the semi-direct product of $B^{\Gamma,A}$ by $H$, via the action of $H$ on $\Gamma$ permuting the copies of $B$.

\begin{prop}
Think of the copy of $A^V$ in $B^{\Gamma,A}$ as a profinite group, with the product topology from the topologies on the copies of $A$.
\begin{enumerate}[(i)]
\item There is a unique structure of a t.d.l.c. group on $B^{\Gamma,A}$ with $A^V$ a compact open subgroup.
\item There is a unique structure of a t.d.l.c. group on $B \gwr_\Gamma^A H$ that makes it a topological semi-direct product.
\end{enumerate}
\end{prop}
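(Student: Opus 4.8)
The plan is to obtain both topologies from a single device: the standard criterion for topologising an abstract group around a commensurated subgroup (cf. \cite[\S 8.B]{cdlh}). Recall its form: if $U$ is a subgroup of an abstract group $G$ carrying a topological group topology, and if every $g\in G$ lies in the topological commensurator of $U$ --- that is, $U\cap gUg^{-1}$ is open in both $U$ and $gUg^{-1}$, and conjugation by $g$ restricts to a homeomorphism $U\cap g^{-1}Ug\to U\cap gUg^{-1}$ --- then there is a unique group topology on $G$ for which the inclusion $U\hookrightarrow G$ is an open topological embedding, with the open subgroups of $U$ forming a neighbourhood base at the identity. Uniqueness in both parts of the proposition is then immediate: once $A^V$ (resp. $A^V\rtimes L$) is declared open and given the prescribed profinite topology, its open subgroups form a neighbourhood base at $1$ and their left translates determine the topology of the whole group.

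For (i) I take $G=B^{\Gamma,A}$ and $U=A^V=\prod_{v\in V}A_v$, a product of compact open subgroups of $B$ and hence a profinite, in particular compact, group. The topological commensurator of $A^V$ is a subgroup of $G$, so it is enough to check that it contains the generating subgroups $A^V$ and $B_v$ ($v\in V$). For $g\in A^V$ this is trivial. For $g=b\in B_w$, relation (ii) shows that $b$ centralises $\prod_{v\neq w}A_v$ and acts on the remaining coordinate $A_w=A$ by ordinary conjugation inside $B$, whence $A^V\cap bA^Vb^{-1}=\bigl(\prod_{v\neq w}A_v\bigr)\times(A\cap bAb^{-1})$; this is open in $A^V$ because the compact open subgroup $A$ is commensurated by $B$, and the associated conjugation map is the identity on $\prod_{v\neq w}A_v$ and continuous conjugation by $b$ on the $w$th coordinate, so it is a homeomorphism. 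The criterion then yields a group topology with $A^V$ compact open; local compactness and total disconnectedness follow since $A^V$ is a profinite open subgroup, and Hausdorffness follows from $\bigcap\ca N=\{1\}$ for $\ca N$ the open subgroups of $A^V$. Thus $B^{\Gamma,A}$ is t.d.l.c.

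For (ii), part (i) makes $N:=B^{\Gamma,A}$ a t.d.l.c. group on which $H$ acts by the automorphisms permuting the vertex-indexed copies of $B$. The essential point is that the action $H\times N\to N$ is continuous. It suffices to check continuity on the compact open subgroup $A^V$, on which $H$ acts by permuting coordinates; since the action of $H$ on $\Gamma$ has open stabilisers, the induced permutation of any finite set of coordinates is locally constant on $H$, and together with continuity in the $A^V$-variable this gives continuity of the restricted action, whence of the action on all of $N$. A continuous action of the t.d.l.c. group $H$ on the t.d.l.c. group $N$ by topological automorphisms endows the abstract semidirect product $N\rtimes H$ with a unique topological semidirect product structure, namely the product topology; for any compact open $L\leq H$ the subgroup $A^V\rtimes L$ is a semidirect product of profinite groups, hence profinite and a compact open subgroup, so $B\gwr_\Gamma^A H$ is t.d.l.c. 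Equivalently, one may run the criterion of the first paragraph directly on $B\gwr_\Gamma^A H$ with subgroup $A^V\rtimes L$, verifying the commensuration condition also for $g=h\in H$, where $h(A^V\rtimes L)h^{-1}\cap(A^V\rtimes L)$ contains the open subgroup $A^V\rtimes(L\cap hLh^{-1})$.

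I expect the main obstacle to be the commensuration check in (i): one must exploit the relations of the polyhedral product (chiefly relation (ii)) to see that conjugation by a generator $b\in B_w$ perturbs only the single coordinate $A_w$, and does so through conjugation in $B$, where commensurability of the compact open subgroup $A$ rescues openness --- the only genuinely non-formal input being the structural fact $A^V\cap B_w=A_w$ supplied by the construction. The secondary difficulty is in (ii), namely extracting genuine continuity of the $H$-action on $A^V$ from the bare open-stabiliser (discreteness) hypothesis on the action on $\Gamma$.
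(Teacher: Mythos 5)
Your proposal is correct and follows essentially the same route as the paper: part (i) is exactly the commensurated-open-subgroup criterion of \cite[Proposition 8.B.4]{cdlh}, and part (ii) is the same topological semidirect product construction the paper imports from \cite{Cornulier}. You simply carry out explicitly the commensuration and continuity checks that the paper delegates to those references.
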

\begin{proof}
(i) follows from \cite[Proposition 8.B.4]{cdlh}. (ii) may be proved in exactly the same way as \cite[Proposition 8.B.4]{Cornulier}.
\end{proof}

Note that the polyhedral product $B^{\Gamma,A}$, where $\Gamma$ is the complete graph on its vertices, is the same as the semi-restricted power $B^{V,A}$ on the vertices of $\Gamma$. So wreath products of t.d.l.c. groups are a special case of graph-wreath products.

Note too that, if $\Gamma_1, \Gamma_2$ are two graphs with the same set of vertices and the edge set of $\Gamma_1$ is contained in the edge set of $\Gamma_2$, the induced map $B \gwr_{\Gamma_1}^A H \to B \gwr_{\Gamma_2}^A H$ is a quotient. Moreover this map is an isomorphism only if the two graphs are equal (see \cite[Lemma 2.4]{Cornulier2}). The same is true if instead $\Gamma_2$ is a quotient graph of $\Gamma_1$.

It would be more comforting if the compact open subgroup $A^V$ of $B^{\Gamma,A}$ looked more like a colimit, as is the case for abstract groups. Indeed, when $\Gamma$ is a discrete graph, we might hope for $B^{\Gamma,A}$ to be something like a coproduct of t.d.l.c. groups. The following proposition tells us not to interpret this hope too literally.

\begin{prop}
The category of t.d.l.c. groups does not have all small coproducts.
\end{prop}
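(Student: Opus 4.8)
The plan is to exhibit an explicit family of t.d.l.c. groups whose coproduct cannot exist, by showing that the existence of such a coproduct would force a non-discrete topology to be both discrete and non-discrete, or more directly that the candidate universal object fails a cardinality/local-compactness constraint. The cleanest strategy is to consider a countably infinite family of copies of a fixed nontrivial compact group, say $K_i \cong \Z/p\Z$ for $i \in \N$, and to argue that a coproduct $\coprod_i K_i$ in the category of t.d.l.c. groups would have to be the restricted direct sum $\bigoplus_i K_i$ as an abstract group (since the abstract coproduct of abelian groups, or the free product in general, is forced by the universal property together with the canonical inclusions), but that this abstract group carries no t.d.l.c. topology making all the inclusions continuous and satisfying the universal property.

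First I would pin down what the underlying abstract group of a coproduct must be. Given any t.d.l.c. group $T$ and any family of continuous homomorphisms $f_i \colon K_i \to T$, the universal property demands a unique continuous homomorphism out of the coproduct; applying this in the category of abstract groups (forgetting topology) shows the underlying abstract group surjects onto the abstract free product, and the canonical maps force it to \emph{be} the abstract free product $\ast_i K_i$ (for the abelian $\Z/p$ example, one may instead work with $\bigoplus_i \Z/p$). So the coproduct, if it exists, has a determined underlying abstract group $C$, and the only freedom is the topology.

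Next I would derive a contradiction from the topological universal property. The key obstruction is local compactness combined with the requirement that each inclusion $K_i \hookrightarrow C$ be continuous with open (or at least "large") image. A t.d.l.c. group has a compact open subgroup $U$ by van Dantzig's theorem. I would test the universal property against target groups built to detect that infinitely many of the $K_i$ must simultaneously land inside $U$, which is impossible because $U$ is compact while $C$ contains an infinite discrete set arising from the free-product structure (elements of unbounded word length). Concretely, one can map each $K_i$ into a common t.d.l.c. group in incompatible ways and show that no single topology on $C$ can make all the induced universal maps continuous: there is no way to reconcile the compactness forced on neighbourhoods of the identity with the infinitely many independent copies $K_i$ each of which must meet every identity neighbourhood nontrivially for the inclusions to be jointly continuous.

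The main obstacle will be making the contradiction fully rigorous rather than merely plausible, since it is a non-existence statement and one cannot simply "construct" the bad object. The delicate point is to choose the testing family of continuous homomorphisms $f_i$ cleverly enough that the unique factoring map is forced to be discontinuous on any candidate topology; I would handle this by picking targets in which the images of the $K_i$ generate an abstract group that is not locally compact in any compatible topology (for instance arranging the $f_i$ so that their images accumulate at the identity in one target but are discrete in another, contradicting uniqueness of the factorisation together with continuity). Once the incompatibility of the two required behaviours is isolated, the contradiction closes and the proposition follows.
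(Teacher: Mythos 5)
There is a decisive gap: the family you chose actually \emph{does} have a coproduct in the category of t.d.l.c.\ groups, so no contradiction can be extracted from it. Each $K_i\cong\Z/p\Z$ is finite, hence discrete, and the abstract free product $F=\ast_i K_i$ equipped with the \emph{discrete} topology is a t.d.l.c.\ group; the canonical inclusions $K_i\to F$ are continuous, and for any t.d.l.c.\ group $T$ and continuous homomorphisms $f_i\colon K_i\to T$ the unique abstract homomorphism $F\to T$ extending them is automatically continuous because $F$ is discrete. So $F$ satisfies the universal property and $\coprod_i\Z/p\Z$ exists. The same remark kills the abelian variant $\bigoplus_i\Z/p\Z$ with the discrete topology. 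To have any hope of obstruction you must take infinitely many \emph{non-discrete} groups, e.g.\ $K_i\cong\Z_p$: then a continuous map from the compact group $K_i$ to a discrete group has finite image, so the discrete free product is no longer a candidate, and the real difficulty begins. Two further problems would remain even after fixing the example: (1) your claim that the universal property forces the underlying abstract group of the coproduct to be the abstract free product is unjustified --- the forgetful functor to abstract groups is not known to preserve coproducts, and in general the images of the $K_i$ need only topologically generate the coproduct; (2) the contradiction itself (``images accumulate at the identity in one target but are discrete in another'') is never instantiated with concrete targets, and you acknowledge this is the hard part. As written, the proof does not go through.

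For comparison, the paper avoids all of this analysis: since abelianisation is a reflection of t.d.l.c.\ groups onto abelian t.d.l.c.\ groups, existence of all small coproducts would pass to the abelian subcategory, and there the Pontryagin dual of a theorem of Hofmann and Morris on products of locally compact abelian groups says a coproduct $\coprod_i G_i$ can exist only if all but finitely many $G_i$ are discrete; taking infinitely many copies of a non-discrete abelian t.d.l.c.\ group such as $\Z_p$ finishes the argument. If you want a self-contained proof along your lines, you should expect to reprove a version of that duality statement, which is substantially more work than the sketch suggests.
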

\begin{proof}
If the category of t.d.l.c. groups had coproducts, so would the category of abelian t.d.l.c. groups, because abelianisation is a reflection functor from the former to the latter. But the Pontryagin dual of \cite[Lemma 9]{HM} says that a coproduct $\coprod_i G_i$ exists in the category of abelian t.d.l.c. groups only if, for all but finitely many $i$, $G_i$ has a minimal compact open subgroup -- that is, only if all but finitely many of the $G_i$ are discrete.
\end{proof}

On the other hand, the following construction of a $B^{\Gamma,A}$-space, when $\Gamma$ is discrete, should be thought of as corresponding to building an Eilenberg-MacLane space for a free product of groups as the wedge sum of Eilenberg-MacLane spaces for each of the groups.

See \cite{KropMart} for the definition of a flag complex $L$ of a graph $\Gamma$.

The presentation of $B^{\Gamma,A}$ described above suggests a generalised presentation for $B^{\Gamma,A}$, following the approach of \cite[Proposition 5.10]{it:ratdiscoh}: fix a discrete subset $S$ of $B$ such that $A$ and $S$ together generate $B$; write $A_v$ and $S_v$ for the copies of $A$ and $S$ in each $B_v$. Without loss of generality, we may assume $S = S^{-1}$. Let $\Lambda$ be the graph with one vertex $x$ and a loop for every $s \in \bigcup_v S_v$, which we abusively call $s$. Let $(\Pi,\Lambda)$ be the graph of profinite groups based on $\Lambda$, defined as follows:
\begin{enumerate}[(i)]
\item $\Pi_x = A^V$;
\item $\Pi_s = A^V \cap s^{-1}A^Vs$ for all $s \in \bigcup_v S_v$;
\item $\alpha_s: \Pi_s \hookrightarrow s^{-1}A^Vs \xrightarrow{^s-} A^V$;
\item $\omega_s: \Pi_s \hookrightarrow A^V$.
\end{enumerate}
The proof of \cite[Proposition 5.10(a)]{it:ratdiscoh} shows that $(\Pi,\Lambda)$, together with the obvious map $\pi_1(\Pi,\Lambda) \to B^{\Gamma,A}$, is a generalised presentation of $B^{\Gamma,A}$; the kernel $K$ of this map is then a discrete free group.

Similarly, for each $S_v$, we can imitate \cite[Proposition 5.10]{it:ratdiscoh} to define a generalised presentation $(\Pi_v,\Lambda_v)$ for $B_v$. Write $K_v$ for the kernel of the map $\pi_1(\Pi_v,\Lambda_v) \to B_v$ and $T_v$ for a set of generators of $K_v$. The canonical maps $B_v \to B^{\Gamma,A}$, $S_v \to \bigcup_v S_v$ and $A_v \to A^V$ induce a map $K_v \to K$, and we will identify $T_v$ with its image under this map. Now it is not hard to check that $\bigcup_v T_v$, together with the elements $[s,t]$ for all $s \in S_v$, $t \in S_w$ such that $v,w$ are joined by an edge in $\Gamma$, generate $K$.

Finally, using the construction from the proof of Proposition \ref{prop:typeF2}, we can use this generalised presentation to get a simply connected $B^{\Gamma,A}$-CW-complex $X^\Gamma_2$. Moreover, the $H$-action on $\Gamma$ induces an $H$-action on our choice of generalised presentation -- that is, $H$ acts on the graph of profinite groups $(\Pi,\Lambda)$ and on our choice of generating set of $K$ -- and thus we get a cellular $H$-action on $X^\Gamma_2$ which is compatible with the $B^{\Gamma,A}$-action, and makes $X^\Gamma_2$ into a simply connected $B \gwr_\Gamma^A H$-CW-complex.

Thanks to the Hurewicz theorem, we may add higher cells to kill higher homotopy and get a contractible $B \gwr_\Gamma^A H$-CW-complex. 

\subsection{Sufficient finiteness conditions}

With these definitions in place, we can prove various necessary and sufficient conditions for wreath products and graph-wreath products of t.d.l.c. groups. We will give necessary conditions in the next subsection, and sufficient conditions in this; in both cases, we prove conditions about compact presentation for graph-wreath products, and restrict to wreath products for type $\F_n$ conditions for higher $n$, thanks to the technical issue of constructing the right space for the graph-wreath product to act on.

For the rest of Section \ref{s:gwp}, type $\FP_n$ will always mean over $\Z$ -- this allows us to use Proposition \ref{prop:FvsFP}.

We start by noting that the following analogues of \cite[Proposition 2.1, Theorem 2.2]{Cornulier2} hold here:

\begin{prop}
\label{prop:compres}
\begin{enumerate}[(i)]
\item Suppose that $B$ and $H$ are compactly generated, and that $H$ acts on the vertices of $X$ with finitely many orbits. Then $G = B \gwr^A_\Gamma H$ is compactly generated.
\item Suppose that $B$ and $H$ are compactly presented, and that $H$ acts on the vertices of $\Gamma$ with compactly generated stabilisers and acts on the vertices and edges of $\Gamma$ with finitely many orbits. Then $G = B \gwr^A_\Gamma H$ is compactly presented.
\end{enumerate}
\end{prop}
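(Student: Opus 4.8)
The plan is to handle the two parts separately: part (i) by exhibiting an explicit compact generating set, and part (ii) by applying Brown's criterion for compact presentability, Theorem~\ref{thm:compres}, to the simply connected $G$-CW-complex $X^\Gamma_2$ built above.

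For (i), write $B=\langle A\cup S\rangle$ with $S$ finite and $H=\langle U\cup T\rangle$ with $U$ a compact open subgroup and $T$ finite, which is possible as $B$ and $H$ are compactly generated. Let $v_1,\dots,v_k$ be representatives of the finitely many $H$-orbits on the vertices of $\Gamma$, and write $S_{v_i}$ for the copy of $S$ in $B_{v_i}$. I claim $G$ is generated by the compact set $(A^V\rtimes U)\cup T\cup\bigcup_{i=1}^k S_{v_i}$. Indeed, $\langle A^V\rtimes U, T\rangle$ contains both $A^V$ and the image of $H$; since $H$ permutes the copies of $B$ and every vertex lies in the $H$-orbit of some $v_i$, conjugating $B_{v_i}=\langle A_{v_i}\cup S_{v_i}\rangle$ by $H$ produces $B_v$ for every vertex $v$. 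As the polyhedral product is generated by $A^V$ together with all the $B_v$, the subgroup generated contains $B^{\Gamma,A}$ and $H$, hence all of $G$.

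For (ii), I would verify the hypotheses of Theorem~\ref{thm:compres} for $X^\Gamma_2$, the key point being to compute cell stabilisers for the full action of $G=B^{\Gamma,A}\rtimes H$ and not merely that of $B^{\Gamma,A}$. The $0$-cells form a single $G$-orbit, the cosets of $A^V$; since $h(A^V)=A^V$ for all $h\in H$, the stabiliser of the base vertex is $A^V\rtimes H$, and because $A^V$ is profinite, $H$ is compactly presented, and compact presentation is closed under extensions (\cite[Proposition~8.A.10]{cdlh}), this stabiliser is compactly presented. The $1$-cells are the loops indexed by $\bigcup_v S_v$, on which $H$ acts by permuting the index $v$; the stabiliser in $G$ of the loop at $s\in S_v$ is an extension of $H_v$ by the compact group $\Pi_s$, hence compactly generated, as $H_v$ is an open vertex stabiliser that is compactly generated by hypothesis. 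This establishes conditions (a) and (b) of Theorem~\ref{thm:compres}.

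For condition (c), I must check that $X^\Gamma_2$ has finitely many $G$-orbits of cells up to dimension $2$. There is one orbit of $0$-cells; the $1$-cells are indexed, up to the $G$-action, by the $H$-orbits of vertices together with a choice in $S$, a finite set since $S$ is finite and there are finitely many vertex-orbits. The $2$-cells are attached along the normal generators $\bigcup_v T_v$ of the groups $K_v$ and the commutators $[s,t]$ over edges of $\Gamma$; these give $G$-orbits indexed respectively by ($H$-orbits of vertices)$\times T$ and ($H$-orbits of edges)$\times S\times S$, both finite because $B$ is compactly presented (so each $T_v$ may be taken finite), $S$ is finite, and $H$ acts with finitely many orbits on both vertices and edges of $\Gamma$. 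Theorem~\ref{thm:compres} then gives that $G$ is compactly presented.

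The main obstacle is the stabiliser bookkeeping in part (ii): one must confirm that passing from the $B^{\Gamma,A}$-action on $X^\Gamma_2$ to the $G$-action replaces the compact $B^{\Gamma,A}$-stabilisers by the groups $A^V\rtimes H$ and $\Pi_s\rtimes H_v$ with exactly the finiteness demanded by Theorem~\ref{thm:compres}, and in particular that the projections onto $H$ and $H_v$ are surjective so that closure of compact presentation (resp. compact generation) under extensions applies.
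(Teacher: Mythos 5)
Your proof is correct. Part (i) is essentially identical to the paper's: both exhibit the compact generating set consisting of $A^V$ (or $A^V\rtimes U$), compact generators of $H$, and compact generators of $B_v$ for one $v$ per $H$-orbit. For part (ii), however, you take a genuinely different route. The paper simply asserts that an explicit compact presentation can be written out as in Cornulier's treatment of wreath products of abstract groups (\cite[Theorem 2.2]{Cornulier2}) and leaves the details to the reader; you instead apply the paper's own Brown-type criterion, Theorem~\ref{thm:compres}, to the simply connected $G$-CW-complex $X^\Gamma_2$ already constructed in \S\ref{s:gwp}. Your stabiliser bookkeeping is the substantive content and it checks out: the single orbit of $0$-cells has stabiliser $A^V\rtimes H$, compactly presented as a profinite-by-(compactly presented) extension via \cite[Proposition 8.A.10]{cdlh}; the edge corresponding to $s\in S_v$ has stabiliser $\Pi_s\rtimes H_v$ (note $h(\Pi_s)=\Pi_s$ for $h\in H_v$ since $h$ fixes the $S$-coordinate), which is compactly generated because $H_v$ is by hypothesis; and the orbit counts of $1$- and $2$-cells are finite exactly because $S$ and the normal generating sets $T_v$ can be taken finite ($B$ compactly presented) and $H$ has finitely many orbits on vertices and edges of $\Gamma$. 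What your approach buys is a verification carried out entirely inside the machinery of the paper, using every hypothesis visibly, rather than an appeal to an external explicit presentation; what the paper's approach buys is brevity and independence from the construction of $X^\Gamma_2$. Either is acceptable; yours is arguably the more self-contained of the two.
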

\begin{proof}
\begin{enumerate}
\item $G$ is generated by $A^V$, a compact set of generators for $H$, and a compact set of generators for $B_v$, for one choice of $v$ from each $H$-orbit of $V$.
\item A presentation can be written out explicitly, as in \cite[Theorem 2.2]{Cornulier2}. The details are left to the reader.
\end{enumerate}
\end{proof}

Now we can produce an analogue, for t.d.l.c. groups, of higher finiteness conditions for wreath products of abstract groups. Here we imitate the proof of \cite[Theorem A]{KropMart}.

\begin{thm}
\label{PeterA}
Suppose that $B$ and $H$ have type $\F_n$, and that for $1 \leq p \leq n$ $H$ acts diagonally on $X^p$ with finitely many orbits and with stabilisers of type $\FP_{n-p}$. Then $G = B \wr^A_X H$ has type $\F_n$.
\end{thm}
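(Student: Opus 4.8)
The plan is to reduce the assertion to a homological statement over $\Z$ and then run a Lyndon--Hochschild--Serre argument on the defining extension, in the spirit of \cite[Theorem A]{KropMart}.

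First I would dispose of the low cases and reduce to type $\FP_n$. For $n\le 1$ the assertion only concerns compact generation: type $\F_n$ makes $B$ and $H$ compactly generated, and the $p=1$ hypothesis gives finitely many $H$-orbits on $X$, so $G=B\wr^A_X H$ is compactly generated by Proposition~\ref{prop:compres}(i). For $n\ge 2$, type $\F_n$ forces $B$ and $H$ to be compactly presented (type $\F_2$), and the hypotheses with $p=1,2$ say exactly that $H$ acts on the complete graph $\Gamma$ on $X$ with compactly generated vertex stabilisers (they are of type $\FP_{n-1}$, hence $\FP_1$) and with finitely many orbits of vertices and of edges; hence $G$ is compactly presented by Proposition~\ref{prop:compres}(ii). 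Since we work over $\Z$, Proposition~\ref{prop:FvsFP} then reduces the problem to proving that $G$ has type $\FP_n$ over $\Z$.

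For the homological core I would use the extension $1\to N\to G\to H\to 1$ with $N=B^{X,A}$. The naive route via Theorem~\ref{thm:LHS} fails, because $N$ is not itself of type $\FP_n$ when $X$ is infinite; instead one must control the homology of the base as an $H$-module. The key computation is a polyhedral/restricted-tensor description of $H_\bullet(N;\Z)$: because the cofinite factors of the semi-restricted power lie in the compact (profinite) subgroup $A$, which contributes nothing in positive degree, the class $H_q(N;\Z)$ is assembled from tensor pieces of $H_\bullet(B;\Z)$ supported on $p$-element subsets of $X$ with $1\le p\le q$. Permutation of the factors by $H$ then exhibits $H_q(N;\Z)$ as a finite direct sum of modules induced from the $H$-stabilisers of $p$-tuples, with coefficients the degree-$q$ part of $H_\bullet(B;\Z)^{\otimes p}$, which is finitely generated since $B$ has type $\F_n$. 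I would then check that each $H_q(N;\Z)$, for $q\le n$, is of type $\FP_{n-q}$ as a $\Z[H]$-module: finitely many $H$-orbits on $X^p$ bound the number of summands, each stabiliser is of type $\FP_{n-p}$ hence $\FP_{n-q}$ (as $p\le q$), and induction preserves the property by Corollary~\ref{cor:fpinduction}. Feeding this into the spectral sequence $E^2_{p,q}=H_p(H;H_q(N;\Z))\Rightarrow H_{p+q}(G;\Z)$, with $H$ of type $\FP_n$, yields type $\FP_n$ for $G$ over $\Z$ — concretely by splicing the finite-type partial resolutions coming from each row and applying Corollary~\ref{cor:FPn} repeatedly. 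Combined with the first paragraph this gives type $\F_n$.

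The main obstacle, and the genuinely new point compared with the abstract case, is the semi-restricted power $B^{X,A}$ and its compact open subgroup $A^X$: I must carry out the base-homology computation while staying inside discrete permutation modules with compact open stabilisers, i.e.\ verifying both that the compact factors drop out in positive degree and that the resulting $X^p$-indexed pieces are genuinely finitely generated and proper as $\Z[G]$-modules. Making this precise — so that Corollaries~\ref{cor:fpinduction} and~\ref{cor:FPn} and Theorem~\ref{fpnkpn} apply, and that the spectral sequence can be assembled from finite-type data — is where the real work lies.
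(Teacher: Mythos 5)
Your reduction to ``compactly presented plus type $\FP_n$ over $\Z$'' via Propositions~\ref{prop:compres} and~\ref{prop:FvsFP} is exactly the paper's closing step, but the homological core of your argument has a genuine gap. The central claim --- that the compact factors $A^{X\setminus F}$ ``contribute nothing in positive degree'', so that $H_q(B^{X,A};\Z)$ is a finite sum of induced modules with finitely generated coefficients --- is a \emph{rational} statement. Over $\Z$ a profinite (even finite) group has nontrivial ordinary homology in positive degrees, and in the discrete-module setting over $\Z$ the situation is worse: proper discrete permutation modules are not projective, so there is no well-defined $\dTor$, no Lyndon--Hochschild--Serre spectral sequence in ${}_{\Z[G]}\dis$, and the ``homology'' read off a contractible proper discrete $G$-CW-complex depends on the choice of complex (a point and $S^\infty$ give different answers for $\Z/2$). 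The paper only develops $\dTor$ and the LHS machinery over $\Q$, and explicitly notes before Theorem~\ref{thm:Brown1} that its Bieri criterion holds only over $\Q$; so the tools your plan leans on are simply not available over $\Z$, which is the ring you must work over to invoke Proposition~\ref{prop:FvsFP}. Even setting all that aside, finite generation of the homology groups $H_q(N;\Z)$ as $\Z[H]$-modules would not yield type $\FP_n$ for $G$: finiteness properties are not detected by finite generation of homology, and ``splicing partial resolutions from the rows of a spectral sequence'' is not a valid construction without an actual complex realizing the $E^2$-page.

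The paper avoids all of this by staying topological: it takes a contractible $(B,\eu C)$-CW-complex $Y$ of type $\F_n$ with a single orbit of $0$-cells based at an $A$-fixed point $\ast$, forms the finitary product $\bigoplus_X Y$ (cells are tuples $(C_x)$ with $C_x=\ast$ almost everywhere), shows it is contractible by applying a based contracting homotopy componentwise, and checks that $G$ acts on it with finitely many orbits of $p$-cells for $p\le n$ and with $p$-cell stabilisers commensurable to extensions of compact open groups by stabilisers of $j$-tuples in $H$ ($j \le p$), hence of type $\FP_{n-p}$ by Theorem~\ref{thm:LHS} and Lemma~\ref{lem:cominvariant}. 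Proposition~\ref{Brownelementary} then gives type $\FP_n$ directly, with no homology computation of $B^{X,A}$ needed. If you want to salvage a homological route in the style of \cite{BCK}, you would have to work over $\Q$ (where compact groups genuinely have trivial rational discrete homology and Bieri's criterion is available), which proves the $\FP_n$-over-$\Q$ variant but cannot feed into the Hurewicz argument needed for type $\F_n$.
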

\begin{proof}
Let $Y$ be a contractible $(B,\eu C)$-CW-complex of type $\F_n$. The construction in the proof of Proposition \ref{prop:typeF2} shows that we may choose $Y$ to have a single orbit of $0$-cells, one of which is stabilised by $A$ (which we take to be the basepoint $\ast$ of $Y$). We define the finitary product $\bigoplus_X Y$ to be a CW-complex with cells given by the set $$\{(C_x)_{x \in X}: C_x = \ast \text{ for all but finitely many } x\}:$$ $n$-cells are elements $(C_x)$ for which the dimensions of the $C_x$ sum to $n$, and the attaching maps are defined component-wise. This has basepoint $(\ast)_x$.

Since $Y$ is a CW-complex, it is well-pointed: that is, the inclusion $\ast \to Y$ is a (Hurewicz) cofibration. By \cite[Proposition 4.4.14]{AGP}, it follows that $Y$ is contractible by a based homotopy. Now, applying this contracting homotopy componentwise to $\bigoplus_X Y$, we see that $\bigoplus_X Y$ is contractible.

Now $B^{X,A}$ acts componentwise on $\bigoplus_X Y$ and $H$ acts by permuting components, making $\bigoplus_X Y$ into a $G$-CW-complex. For each $p \leq n$, since $Y$ has finitely many $B$-orbits of $q$-cells for all $q \leq p$, and $X^q$ has finitely many $H$-orbits for all $q \leq p$, $\bigoplus_X Y$ has finitely many $G$-orbits of $p$-cells. By Proposition \ref{Brownelementary}, to show $G$ has type $\FP_n$ it suffices to show the stabilisers of this $G$-action in dimension $p$ have type $\FP_{n-p}$.

To see this, we consider a $p$-cell $(C_x)$, where only $C_{x_1}, \ldots, C_{x_j}$ have dimension $>0$. Note that $\stab_B(C_x)$ for $C_x$ any $0$-cell $\neq \ast$ is conjugate to $A$: therefore $\stab_G((C_x)) \cong \stab_G((C'_x))$, where $(C'_x)$ is the $p$-cell defined by $C'_x = C_x$ for $x \in \{x_1, \ldots, x_j\}$ and $C'_x = \ast$ otherwise. We will show $\stab_G((C'_x))$ has type $\FP_{n-p}$.

Let $K$ be the stabiliser in $H$ of $(x_1, \ldots, x_j) \in X^j$: by hypothesis, this has type $\FP_{n-j}$, and $n-j \geq n-p$. Now the stabiliser of $(C'_x)$ in $B^{X,A}$ is $$\prod_x \stab_B(C'_x) = \prod_{i=1}^j \stab_B(C_{x_i}) \times \prod_{x \notin \{x_1, \ldots, x_j\}} A,$$ which is a compact open subgroup of $B^{X,A}$ (and hence has type $\FP_\infty$). It follows that the semidirect product $$(\prod_{i=1}^j \stab_B(C_{x_i}) \times \prod_{x \notin \{x_1, \ldots, x_j\}} A) \rtimes K$$ given by the restriction of $B^{X,A} \rtimes H$ is isomorphic (as topological groups) to a subgroup of finite index in $\stab_G((C'_x))$.

As an extension of a type $\FP_\infty$ group by a type $\FP_{n-p}$ group, this semidirect product has type $\FP_{n-p}$ by Theorem \ref{thm:LHS}, so $\stab_G((C'_x))$ has type $\FP_{n-p}$ too by Lemma \ref{lem:cominvariant}.

Therefore $G$ has type $\FP_n$. For $n=1$ we are done. For $n \geq 2$, $G$ is compactly presented by Proposition \ref{prop:compres}, and so $G$ has type $\F_n$ by Proposition \ref{prop:FvsFP}.
\end{proof}

\begin{rem}
As noted in \cite{KropMart}, we could also prove an analogous condition replacing every instance of type $\F_n$ with type $\FP_n$, similarly to \cite{BCK}, at the cost of notational complexity.
\end{rem}

\subsection{Necessary finiteness conditions}

In the rest of this section we will prove a partial converse to Theorem \ref{PeterA}, starting with the following converse to Proposition \ref{prop:compres}. We assume from now on, in our wreath products $B \wr^A_X H$, that $X \neq \emptyset$ and $B \neq 1$, to avoid trivial counter-examples. Similarly, for graph-wreath products $B \gwr^A_\Gamma H$, we assume $\Gamma \neq \emptyset$ and $B \neq 1$.

\begin{thm}
\label{F2necessary}
\begin{enumerate}[(i)]
\item Suppose the graph-wreath product $G = B \gwr^A_\Gamma H$ of t.d.l.c. groups is compactly generated. Then $B$ and $H$ are compactly generated, and $H$ acts on the vertices of $\Gamma$ with finitely many orbits.
\item Suppose $G = B \gwr^A_\Gamma H$ is compactly presented. Then $B$ and $H$ are compactly presented; $H$ acts on the vertices of $\Gamma$ with compactly generated stabilisers, and acts on the vertices and edges of $\Gamma$ with finitely many orbits.
\end{enumerate}
\end{thm}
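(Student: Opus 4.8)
The plan is to treat the two parts in increasing order of difficulty, the main tools being the coordinate retractions $\rho_v\colon B^{\Gamma,A}\to B_v\cong B$ (which kill $B_w$ for $w\neq v$ and send $A^V$ to its $v$-th coordinate) and the isomorphism-rigidity of graph-wreath products recorded just before the theorem. For $f\in B^{\Gamma,A}$ write $\mathrm{supp}(f)=\{v:\rho_v(f)\notin A\}$, a finite set, and note $\mathrm{supp}(fg)\subseteq\mathrm{supp}(f)\cup\mathrm{supp}(g)$ and $\mathrm{supp}({}^hf)=h\cdot\mathrm{supp}(f)$.

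\textbf{Part (i).} Write $G=\langle A^V\cup U_H\cup F\rangle$ with $U_H$ a compact open subgroup of $H$ and $F=\{(f_i,h_i)\}$ finite. Since $G\to H$ is a quotient, $H$ is compactly generated. Using $H\leq G$ as a complement to $B^{\Gamma,A}$, Reidemeister--Schreier shows that $B^{\Gamma,A}$ is generated by $A^V$ together with $\{{}^hf_i:h\in H\}$. Applying the surjection $\rho_{v_0}$ gives $B=\langle A\cup\{f_i(x):x\in\mathrm{supp}(f_i)\}\rangle$, so $B$ is generated by the compact set $A$ and finitely many further elements, hence compactly generated. The same generating set shows every $f\in B^{\Gamma,A}$ satisfies $\mathrm{supp}(f)\subseteq H\cdot V_0$ with $V_0=\bigcup_i\mathrm{supp}(f_i)$; taking $f$ to be some $b\in B\setminus A$ placed at an arbitrary vertex $v$ (support $\{v\}$) forces $v\in H\cdot V_0$, so $H$ acts on $V$ with finitely many orbits.

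\textbf{Part (ii), the two groups.} The quotient $G\to H$ splits through the complement $H\leq G$, so $H$ is a group retract of $G$ and is compactly presented by Corollary \ref{cor:retract}. For $B$ I would exhibit $B_{v_0}$ as a continuous group retract of $G$: first collapse every $B_w$ with $w$ outside the orbit $O=H\cdot v_0$ and discard the edges inside $O$ (both $H$-equivariant quotients), then pass to $H$-coinvariants; as $H$ acts transitively on $O$, this last quotient is $B$ and restricts to the identity on $B_{v_0}$. Corollary \ref{cor:retract} then gives that $B$ is compactly presented. (The point to verify here is that this composite really is a continuous retraction onto the closed subgroup $B_{v_0}$.)

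\textbf{Part (ii), the combinatorial conditions.} This is the heart of the matter, and I would obtain both remaining assertions from the stabilization result Proposition \ref{compprescolimit}(iii) together with the rigidity that a quotient map of graph-wreath products is an isomorphism only when the underlying graphs coincide. For the edges: if $H$ had infinitely many edge-orbits, enumerate them $e_1,e_2,\dots$ and let $\Gamma_n$ be the graph on $V$ retaining only the first $n$ edge-orbits. The maps $B\gwr_{\Gamma_n}^A H\to B\gwr_{\Gamma_{n+1}}^A H\to\cdots$ are quotients with abstract-and-topological colimit $G$, each $B\gwr_{\Gamma_n}^A H$ is compactly generated by Proposition \ref{prop:compres}(i) (finitely many vertex orbits, $B$ and $H$ compactly generated), and none of these maps is an isomorphism since the graphs differ; this contradicts the stabilization forced by $G$ being compactly presented. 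For the vertex stabilizers: fix a representative $v$ of each vertex-orbit and write $H_v$ as an increasing union of compactly generated open subgroups $H_v^{(n)}$. Replacing the orbit $H/H_v$ by the finer $H$-set $H/H_v^{(n)}$ (pulling the edges back) gives graphs $\Gamma^{(n)}$ and quotient maps $B\gwr_{\Gamma^{(n)}}^A H\to\cdots\to G$, again each compactly generated with colimit $G$; stabilization forces $H_v^{(n)}=H_v$ for some $n$, so $H_v$ is compactly generated.

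\textbf{Main obstacle.} The substantive work is the combinatorial part of (ii): neither finiteness statement follows from a retraction, and the right device is to realise $G$ as an abstract-and-topological colimit of compactly generated graph-wreath products indexed either by edge-orbits or by refinements of a vertex stabilizer, and then to invoke Proposition \ref{compprescolimit}(iii). The care needed is in checking that these are colimits in the required (simultaneously abstract and topological) sense and that the isomorphism-rigidity applies to the quotient maps in play; the continuous retraction $G\to B$ used for compact presentability of $B$ is a secondary technical point.
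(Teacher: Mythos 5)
Your treatment of compact presentability of $B$ in part (ii) has a genuine gap. You propose to realise $B_{v_0}$ as a group retract of $G$ by collapsing the $B_w$ outside the orbit $O=H\cdot v_0$, discarding the edges inside $O$, and then passing to $H$-coinvariants. Two things go wrong. First, discarding edges is not a quotient operation: as noted just before the theorem, it is \emph{adding} edges to $\Gamma$ that induces a quotient of $B\gwr^A_\Gamma H$, so "remove the edges inside $O$" does not produce a homomorphism out of $G$. Second, and more seriously, the coinvariants step destroys injectivity on $B_{v_0}$. Take $\Gamma$ to be two vertices joined by an edge, $H=C_2$ swapping them, and $B$ nonabelian; then $B^{\Gamma,A}=B\times B$ and any continuous homomorphism $r\colon G\to B_{v_0}$ restricting to the identity on $B_{v_0}$ must send $(1,b)={}^{\sigma}(b,1)$ to $cbc^{-1}$ with $c=r(\sigma)$, and since $(b,1)$ commutes with $(1,b')$ in $G$ this forces $B$ to commute with $cBc^{-1}=B$, i.e.\ $B$ abelian. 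So $B_{v_0}$ is simply not a group retract of $G$ in general, and no variant of the coinvariants construction can be: even over the discrete graph on $O$, relation (ii) of the polyhedral product makes $A$ central in the image of $B_{v_0}$ after killing $H$. Hence Corollary~\ref{cor:retract} cannot be applied here.

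The repair is the same colimit device you already deploy for the edge-orbit and stabiliser statements: by Proposition~\ref{compprescolimit}(ii) write $B$ as the abstract and topological colimit of compactly presented groups $B_n$ with quotient maps between them (each containing $A$), observe that $G$ is then the abstract and topological colimit of the sequence $B_n\gwr^A_\Gamma H$, each term compactly generated by Proposition~\ref{prop:compres}(i), and invoke Proposition~\ref{compprescolimit} to force the sequence, and hence $(B_n)$, to stabilise. The rest of your argument is sound: part (i) is a correct and genuinely different route from the paper's (a direct Reidemeister--Schreier/support computation in place of an exhaustion by proper open subgroups -- though note both arguments tacitly assume $B\neq A$ when deducing finiteness of the vertex orbits); the retract argument for $H$ is clean; and your handling of the edge orbits and vertex stabilisers via colimits of graph-wreath products is essentially the paper's argument, which defers the construction of the approximating $H$-graphs to Cornulier's Lemma~2.9 and Proposition~2.10.
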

\begin{proof}
\begin{enumerate}[(i)]
\item $H$ is a quotient of $G$, hence compactly generated. $B$ embeds in $G$, so is $\sigma$-compact. Similarly, $B^{\Gamma,A}$ is $\sigma$-compact, so $V$ is countable.

Now if $B$ is not compactly generated, it can be written as the union of a nested sequence $(B_n)$ of compactly generated open subgroups, by \cite[Proposition 2.C.3]{cdlh}. We may assume every $B_n$ contains $A$. But then $G$ is the union of the nested sequence of open subgroups $(B_n \gwr^A_X H)$, and hence $G$ is not compactly generated. 

Finally, if $V$ has infinitely many $H$-orbits $V_1,V_2,\ldots$, $G$ is the union of a sequence $(B \gwr^A_{\Gamma_n} H)$, where each $\Gamma_n$ is the subgraph of $\Gamma$ on $\bigcup_{i=1}^nV_n$. For each $n$, $B^{\Gamma_n,A}$ is a closed subgroup $B^{\Gamma_{n+1},A}$, because the inclusion map is split (see \cite[Lemma 2.3]{Cornulier2}). Therefore $B \gwr^A_{\Gamma_n} H$ is a closed subgroup of $B \gwr^A_{\Gamma_{n+1}} H$, and hence $G$ is not compactly generated.

\item By (i), $B$ and $H$ are compactly generated, and $H$ acts on $V$ with finitely many orbits. Now $H$ is compactly presented because it can be written as a quotient of the compactly presented group $B \gwr^A_\Gamma H$ by adding relations which put a compact generating set of $B_x$ equal to $1$, for one $x$ from each $H$-orbit of $V$.

Using Proposition \ref{compprescolimit}, we may write $B$ as the abstract and topological colimit of a sequence $(B_n)$ of compactly presented groups with quotient maps between them. Assume $B$ is not compactly presented, so the sequence does not stabilise. Then $B \gwr^A_\Gamma H$ is the abstract and topological colimit of the sequence $(B_n \gwr^A_\Gamma H)$ of compactly generated groups by Proposition \ref{prop:compres}(i), which does not stabilise, giving a contradiction.

Now \cite[Lemma 2.9]{Cornulier2} holds in the context of t.d.l.c. groups, so we may imitate the proof of \cite[Proposition 2.10]{Cornulier2}. As there, if $\Gamma$ has infinitely many $H$-orbits of edges, or the stabiliser of some vertex is not compactly generated, we may construct a sequence of $H$-graphs $(\Gamma_n)$ with colimit $\Gamma$, so that $B \gwr^A_\Gamma H$ is the abstract and topological colimit of the sequence $(B \gwr^A_{\Gamma_n} H)$ which does not stabilise. Moreover the $\Gamma_n$ all have finitely many $H$-orbits of vertices, so every $B \gwr^A_{\Gamma_n} H$ is compactly generated by Proposition \ref{prop:compres}(i). So Proposition \ref{compprescolimit} gives a contradiction.
\end{enumerate}
\end{proof}

We now restrict once again to wreath products. We follow the general approach of \cite{KropMart}.

\begin{thm}
\label{PeterB}
Suppose $G = B \wr^A_X H$ has type $\F_n$. Suppose for $1 \leq p \leq n-1$ that $H$ acts diagonally on $X^p$ with stabilisers of type $\FP_{n-p}$. Then $B$ and $H$ have type $\F_n$ and $H$ acts diagonally on $X^n$ with finitely many orbits.
\end{thm}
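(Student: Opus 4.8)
The plan is to transport the necessary-conditions half of \cite[Theorem A]{KropMart} to the t.d.l.c.\ setting, arguing by induction on $n$ with the base group $N = B^{X,A}$ as normal subgroup and quotient $G/N \cong H$. The cases $n \le 2$ are precisely Theorem~\ref{F2necessary} (with $\Gamma$ the complete graph on $X$): for $n=1$ this gives $B,H$ compactly generated and finitely many $H$-orbits on $X = X^1$, and for $n=2$ it additionally gives compact presentation of $B$ and $H$ and finitely many orbits on vertices and edges, hence on $X^2$. So I would assume $n \ge 3$ and the statement for smaller values. The conclusion for $H$ is immediate and ring-independent: the projection $G \to H$ and the inclusion $H \hookrightarrow G$ make $H$ a group retract, hence a quasi-retract, of $G$, so Theorem~\ref{thm:quasiretract} gives $H$ type $\F_n$.

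The engine of the argument is the analysis of $\dH_\bullet(N)$ as an $H$-module through the Lyndon--Hochschild--Serre spectral sequence
\begin{equation*}
E^2_{p,q} = \dH_p\big(H,\dH_q(N)\big) \Longrightarrow \dH_{p+q}(G),
\end{equation*}
combined with the semi-restricted Künneth decomposition of $\dH_q(N)$. Since $A$ is compact open, each coordinate contributes only its reduced homology relative to $A$; grouping by the number $p$ of coordinates carrying positive-degree homology writes $\dH_q(N)$ as a direct sum, over $H$-orbits of $p$-element configurations $S$ in $X$, of induced modules $\ind_{H_S}^{H}\big(\bigotimes_{x\in S}\bar{\dH}(B)\big)$, where $H_S$ is the setwise stabiliser. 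The diagonal stabiliser hypotheses (that $H$ acts on $X^p$ with stabilisers of type $\FP_{n-p}$ for $p\le n-1$), together with the inductively known finiteness of $B$ and of the lower orbit spaces, guarantee via Corollary~\ref{cor:fpinduction} that every row $\dH_q(N)$ with $q\le n-1$ is a finitely generated $H$-module of the appropriate finite homological type.

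With the lower rows controlled I would extract finiteness of the top row. As $G$ has type $\F_n$, $\dH_n(G)$ is finitely generated, and its filtration quotient $E^\infty_{0,n}$ is a subquotient of it, hence finitely generated. The differentials entering $E^r_{0,n}$ come from $E^r_{r,n-r+1}$ with $r\ge 2$, i.e.\ from subquotients of $\dH_r(H,\dH_{n-r+1}(N))$ with $n-r+1\le n-1$, which are finitely generated by the previous step (here one uses $H$ of type $\FP_r$ against a finitely generated coefficient module). Since $E^\infty_{0,n}$ is the quotient of $E^2_{0,n}=\dH_n(N)_H$ by the finitely generated sum of these images, $\dH_n(N)_H$ is itself finitely generated; reading off the top stratum (configurations of $n$ distinct coordinates) forces finitely many $H$-orbits of such, and with the inductively established orbit-finiteness on $X^p$ for $p<n$ this yields finitely many orbits on all of $X^n$. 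For $B$, the $p=1$ stratum is $\bigoplus_{x\in X}\bar{\dH}(B)$, induced from point stabilisers; repeating the finiteness bookkeeping with coefficients in an arbitrary product and invoking Bieri's criterion (Theorem~\ref{thm:Bieri}) shows $\dTor^B$ commutes with products through degree $n$, so $B$ has type $\FP_n$, and since $B$ is compactly presented by the $n=2$ case, Proposition~\ref{prop:FvsFP} upgrades this to type $\F_n$.

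The hard part will be twofold. First, the ring: Bieri's criterion and the convenient $\dTor$/spectral-sequence machinery are only available over $\Q$, whereas the conclusion ``type $\F_n$'' requires type $\FP_n$ \emph{over $\Z$} before Proposition~\ref{prop:FvsFP} applies; reconciling this will require either carrying out the integral finiteness bookkeeping directly on the contractible complex $\bigoplus_X Y$ built in the proof of Theorem~\ref{PeterA} (via the Brown-type argument of Proposition~\ref{Brownelementary} and Theorem~\ref{thm:Brown1}, which stays over $\Z$), or supplying an independent integral argument for $B$. Second, when the reduced homology of $B$ relative to $A$ vanishes the spectral sequence cannot see the orbits at all; this degenerate case must be handled by the colimit/compact-presentation mechanism underlying Theorem~\ref{F2necessary} rather than homologically, and making that work uniformly at higher $n$ is the most delicate point of the proof.
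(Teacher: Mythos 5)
Your outer scaffolding (retract argument for $H$, base cases $n\le 2$ via Theorem~\ref{F2necessary}, induction on $n$) matches the paper, but your engine does not: the paper does \emph{not} run a Lyndon--Hochschild--Serre spectral sequence on $N=B^{X,A}$. Instead it works directly and integrally with the complex $W=\oplus_X Y$ already built in Theorem~\ref{PeterA}, where $Y$ is a contractible $B$-CW-complex of type $\F_{n-1}$ supplied by the inductive hypothesis: the cellular chain complex of $W$ over $\Z$ has each $C_p(W)$ of type $\FP_{n-p}$ (by Corollary~\ref{cor:fpinduction} and Lemma~\ref{lem:cominvariant}, using the stabiliser hypotheses and the inductively known orbit finiteness), so Corollary~\ref{cor:FPn} makes $Z_{n-1}(W)$ finitely generated; exhausting $W$ by the subcomplexes $W_\alpha=\oplus_X Y_\alpha$ with $Y_\alpha$ finite containing the $(n-1)$-skeleton of $Y$, one finds some $\alpha$ with $\mathrm{H}_{n-1}(W_\alpha)=0$, whence $Y_\alpha$ is $(n-1)$-connected and $B$ has type $\F_n$; the orbit count on $X^n$ is obtained by the same exhaustion argument filtered by grand supports. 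This route counts \emph{cells}, not homology classes, and stays over $\Z$ throughout.

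The two difficulties you flag at the end are in fact fatal to your main line, not merely ``hard parts'' to be smoothed over. First, the ring: Bieri's criterion (Theorem~\ref{thm:Bieri}) and the $\dTor$/$\biB(G)$ machinery exist only over $\Q$, so at best you obtain $\FP_n$ over $\Q$ for $B$, which is strictly weaker than the $\FP_n$ over $\Z$ that Proposition~\ref{prop:FvsFP} needs to upgrade to $\F_n$; your proposed repair --- ``carry out the integral bookkeeping on $\oplus_X Y$'' --- is not a patch but a replacement of your argument by the paper's. Second, and more seriously, when the reduced rational homology of $B$ (relative to the orbit $B/A$ of basepoints) vanishes in the relevant degrees, every K\"unneth stratum of $\dH_q(N)$ vanishes and the spectral sequence is blind both to the $H$-orbits on $X^n$ and to the finiteness type of $B$; no amount of bookkeeping recovers the conclusion from an identically zero $E^2$-page, whereas the paper's cellular argument sees the $n$-cells of $W$ (products of $1$-cells of $Y$ over $n$ coordinates) whether or not they carry homology. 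You would also need a K\"unneth decomposition of $\dH_\bullet(B^{X,A})$ for semi-restricted powers of t.d.l.c.\ groups, which is nowhere established in the paper, and the extraction of ``$E^2_{0,n}$ finitely generated'' from ``$\dH_n(G)$ finitely generated'' requires controlling all incoming differentials with product coefficients, which is substantially more delicate than sketched. I would recommend abandoning the spectral-sequence route here and writing up the chain-level argument on $W$ directly.
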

\begin{proof}
$H$ has type $\F_n$ because it is a retract of $G$. For $n \leq 2$ we are done by Theorem \ref{F2necessary}; assume $n \geq 3$. By induction, we may assume $B$ has type $\F_{n-1}$ and $H$ acts diagonally on $X^p$ with finitely many orbits for $1 \leq p \leq n-1$. Let $Y$ be a contractible $(B,\eu C)$-CW-complex of type $\F_{n-1}$. As in Theorem \ref{PeterA}, we take $Y$ to have a single orbit of $0$-cells, one of which, $\ast$, is stabilised by $A$.

Let $(Y_\alpha)$ be a directed family of finite subcomplexes of $Y$ which contain the $(n-1)$-skeleton. Let $(W_\alpha)$ be the directed family of complexes $W_\alpha = \oplus_X Y_\alpha$. Note that every $W_\alpha$ has the same $(n-1)$-skeleton as $W=\oplus_X Y$, so they are all $(n-2)$-connected. Now the chain complex of $W$ gives an exact sequence $$0 \to Z_{n-1}(W) \to C_{n-1}(W) \to \cdots \to C_1(W) \to C_0(W) \to \Z \to 0$$ of discrete $G$-modules. As in the proof of Theorem \ref{PeterA}, $G$ acts on the $p$-cells of $W$ with stabilisers commensurable with the stabilisers of the $H$-action on $X^p$; by Corollary \ref{cor:fpinduction} and Lemma \ref{lem:cominvariant}, the submodule of each $C_p(W)$ generated by each orbit of $p$-cells has type $\FP_{n-p}$; because we have finitely many orbits, each $C_p(W)$ has type $\FP_{n-p}$. Also $\Z$ has type $\FP_n$ as a $\Z[G]$-module.

Now, by Corollary \ref{cor:FPn}, $Z_{n-1}(W)$ is finitely generated. For each $\alpha$ we have a short exact sequence $$0 \to B_{n-1}(W_\alpha) \to Z_{n-1}(W) \to H_{n-1}(W_\alpha) \to 0,$$ so $\varinjlim_\alpha H_{n-1}(W_\alpha) = H_{n-1}(W) = 0$ implies there is some $\alpha$ such that $H_{n-1}(W_\alpha) = 0$. Therefore $X_\alpha$ is $(n-1)$-connected. By adding higher cells to kill higher homotopy, we can get a contractible $(B,\eu C)$-CW-complex of type $\F_n$.

Showing that $H$ acts on $X^n$ with finitely many orbits can be done similarly, and is left as an exercise for the reader. This is an adaptation of part of the proof of \cite[Theorem B]{KropMart}: here we write $\oplus_X Y$ as a directed union of subcomplexes $Y^{X_\alpha}$ of $\oplus_X Y$ containing only the $n$-cells whose \emph{grand support} (see \cite[p.8]{KropMart}) is in $X_\alpha \subseteq X^n$.
\end{proof}

In general, we know of no necessary and sufficient conditions on $H$, $B$ and $X$ for $G$ to have type $\F_n$. However, we do have the following important special case, using the same assumption as is employed in \cite[Theorem C]{KropMart}, for example:

\begin{thm}
Suppose $G = B \wr^A_X H$ has type $\F_n$, and there is an epimorphism $f$ from $B$ to $\Z$ with the discrete topology. Then $B$ and $H$ have type $\F_n$ and $H$ acts diagonally on $X^p$ with finitely many orbits and stabilisers of type $\FP_{n-p}$ for $1 \leq p \leq n$.
\end{thm}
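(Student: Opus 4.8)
The plan is to use the epimorphism $f\colon B\to\Z$ to retract $G$ onto the restricted wreath product $L:=(\bigoplus_X\Z)\rtimes H$, analyse $L$, and then feed the resulting stabiliser conditions into Theorem~\ref{PeterB}. Since $A$ is compact and $\Z$ is discrete, $f(A)=0$, so $f$ induces a continuous $H$-equivariant epimorphism $f_X\colon B^{X,A}\to\bigoplus_X\Z$, $g\mapsto(f(g(x)))_{x\in X}$ (well defined because $g(x)\in A$, hence $f(g(x))=0$, for all but finitely many $x$). Choosing $b_0\in B$ with $f(b_0)=1$ gives a splitting $s\colon\Z\to B$, $s(m)=b_0^m$, and hence a section $s_X\colon\bigoplus_X\Z\to B^{X,A}$, $(m_x)\mapsto(b_0^{m_x})$, landing in the semi-restricted power because $b_0^0=1\in A$. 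Thus $f_X\rtimes\iid_H\colon G\to L$ is a continuous retraction onto the closed subgroup $s_X(\bigoplus_X\Z)\rtimes H\cong L$, closed because, as $A\subseteq\ker f$, the image of $\bigoplus_X\Z$ meets $A^X$ trivially and is therefore discrete. Since a group retract of a group of type $\F_n$ is again of type $\F_n$ (Theorem~\ref{thm:quasiretract}), $L$ has type $\F_n$, and hence type $\FP_n$ over $\Z$; likewise $H$, being a retract of $G$, has type $\F_n$. For $n\le 2$ everything already follows from Theorem~\ref{F2necessary}, so I may assume $n\ge 2$ throughout.

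The heart of the argument is to extract the orbit and stabiliser conditions from the finiteness of $L$, where I would run the Lyndon--Hochschild--Serre analysis of the split extension $1\to\bigoplus_X\Z\to L\to H\to 1$ over $\Z$. Since $\bigoplus_X\Z$ is a discrete free abelian group, $H_q(\bigoplus_X\Z;\Z)\cong\Lambda^q\Z[X]$, which as a $\Z[H]$-module is the \emph{signed} permutation module on the set $\Delta_q$ of $q$-element subsets of $X$. Following the resolution-level argument of \cite{BCK} (equivalently, running the homology spectral sequence $E^2_{pq}=H_p(H;\Lambda^q\Z[X])\Rightarrow H_{p+q}(L;\Z)$ at the level of a free $\Z L$-resolution built as the total complex of a $\Z H$-resolution tensored with the Koszul complex of $\bigoplus_X\Z$), the assumption that $L$ has type $\FP_n$ over $\Z$ forces, by downward induction on $q$ and repeated use of Corollary~\ref{cor:FPn}, that $\Lambda^q\Z[X]$ has type $\FP_{n-q}$ over $\Z[H]$ for every $0\le q\le n$. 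I expect this to be the main obstacle: it is where the special choice of $\Z$ is used (its group homology is the exterior algebra, so only $q$-subsets appear), and where the bookkeeping must be carried out carefully over $\Z$ rather than over $\Q$, since our Bieri-type criterion is available only rationally.

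Granting this, the translation into the stated conditions is routine. Type $\FP_0$ of $\Lambda^q\Z[X]$ says it is finitely generated, i.e.\ $H$ acts on $\Delta_q$ with finitely many orbits; writing $\Lambda^q\Z[X]\cong\bigoplus_\omega\ind^H_{H_{\{\omega\}}}\Z_{\epsilon_\omega}$ over orbit representatives $\omega$, where $H_{\{\omega\}}$ is the (open) setwise stabiliser and $\Z_{\epsilon_\omega}$ a rank-one sign module, type $\FP_{n-q}$ of this finite direct sum forces each summand to be of type $\FP_{n-q}$. Tensoring by the invertible module $\Z_{\epsilon_\omega}$ preserves type $\FP$, so by Corollary~\ref{cor:fpinduction} each $H_{\{\omega\}}$ is of type $\FP_{n-q}$. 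Finally I would pass from subsets to tuples: decomposing $X^p$ according to which coordinates coincide expresses each diagonal $H$-orbit on $X^p$ in terms of orbits on $\Delta_q$ with $q\le p$, giving finitely many orbits on $X^p$ for $1\le p\le n$; and the pointwise stabiliser $\stab_H(x_1,\dots,x_p)$ equals the pointwise stabiliser of its set $S$ of distinct entries, which has finite index in the setwise stabiliser $H_{\{S\}}$, so by Lemma~\ref{lem:cominvariant} it is of type $\FP_{n-q}$, and a fortiori of type $\FP_{n-p}$.

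This establishes that $H$ acts diagonally on $X^p$ with finitely many orbits and stabilisers of type $\FP_{n-p}$ for all $1\le p\le n$. In particular the hypotheses of Theorem~\ref{PeterB} are now verified, and that theorem yields that $B$ and $H$ have type $\F_n$ (and, redundantly, that $H$ acts on $X^n$ with finitely many orbits), completing the proof.
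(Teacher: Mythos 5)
Your overall route is the paper's: use the freeness of $\Z$ to retract $G$ onto $\Z\wr_X H$ (the paper phrases this as $\Z\wr^{f(A)}_X H$ being a retract of $G$ because $f(A)$, being compact in $\Z$, is trivial), extract finiteness conditions on the exterior powers $\Lambda^q\Z[X]$ as $\Z[H]$-modules, translate these into the orbit and stabiliser statements, and finish with Theorem~\ref{PeterB}. The reduction, the passage from $\Lambda^q\Z[X]$ to setwise stabilisers of $q$-subsets (via Corollary~\ref{cor:fpinduction}, the sign-module twist, and Lemma~\ref{lem:cominvariant}), and the final appeal to Theorem~\ref{PeterB} are all sound.

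The gap is at the step you yourself flag as the main obstacle. You claim that type $\FP_n$ of $L=\Z\wr_X H$ forces $\Lambda^q\Z[X]$ to have type $\FP_{n-q}$ over $\Z[H]$ ``by downward induction on $q$ and repeated use of Corollary~\ref{cor:FPn}'' applied to the total complex of a $\Z[H]$-resolution tensored with the Koszul complex. That mechanism cannot deliver the conclusion. What a finite-type resolution of $\Z$ over $L$ gives you, after applying $-\otimes_{\bigoplus_X\Z}\Z$, is a complex of finitely generated proper discrete permutation $\Z[H]$-modules in degrees $\leq n$ whose homology is $\Lambda^{*}\Z[X]$; but the finiteness type of the homology of such a complex is not controlled by the finiteness of its terms. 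Already for a two-term complex $C_1\to C_0$ of finitely generated free modules, $H_0$ need only have type $\FP_1$ and $H_1$ need not even be finitely generated; Corollary~\ref{cor:FPn} only propagates finiteness once you control the boundary modules $B_q$, which is exactly what is unknown here. This implication is genuinely the content of \cite[Proposition 4.1]{KropMart} (and of the analogous results in \cite{BCK}), whose proofs go through Bieri's product criterion -- the behaviour of $\dTor_k(\dd\prod\biB(G),-)$ -- rather than exact-sequence juggling, and that is precisely how the paper closes this step: it invokes Theorem~\ref{thm:Bieri} together with the observation that $\Z$, viewed as a t.d.l.c. group, has the same homology as the abstract group $\Z$, and then imports the KropMart arguments wholesale. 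Your instinct that this is delicate because Theorem~\ref{thm:Bieri} is only established over $\Q$ is reasonable (the paper is terse on this point), but replacing the criterion by Koszul-complex bookkeeping does not repair the step; you would need either a version of the product criterion adequate for the coefficients at hand or a separate argument explaining why the rational conclusion suffices for the statement as given.
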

\begin{proof}
Thanks to Theorem \ref{PeterB}, we just have to show $H$ acts diagonally on $X^p$ with stabilisers of type $\FP_{n-p}$ for $1 \leq p \leq n$. Note that $f(A)$ is a compact subgroup of $\Z$, so it is trivial. Since $\Z$ is free, $\Z \wr_X H = \Z \wr^{f(A)}_X H$ is a retract of $G$, and hence of type $\F_n$, so we may assume that $B=\Z$.

Clearly $\Z$ has the same homology groups considered as a t.d.l.c. group that it has considered as an abstract group. Together with Bieri's criterion, Theorem \ref{thm:Bieri}, this shows that the proofs of \cite[Proposition 4.1, Corollary 4.2, Theorem C]{KropMart} immediately carry over to our context, and the result follows.
\end{proof}

We sketch briefly a t.d.l.c. analogue of \cite[Theorem D]{KropMart}. Suppose $H$ is poly-(compact-open-by-cyclic). Now for any compact open subgroup $U$ of $H$, $U$ is poly-(cyclic or finite)-ly contained in $H$, in the terminology of \cite{Hall}, so we may apply the proof of \cite[Lemma 3]{Hall}, and the same inductive argument as the Corollary after \cite[Lemma 3]{Hall}, to show: for $N$ a discrete $U$-module contained in a discrete $H$-module $M$ such that $N$ generates $M$ as an $H$-module, if $N$ is noetherian as a $U$-module, $M$ is noetherian as an $H$-module.

In particular, taking $M = \Z[H/U]$ and $N$ the copy of $\Z$ generated by the identity coset of $U$, we get that $\Z[H/U]$ is a noetherian $H$-module. Since this is true for all proper smooth discrete $H$-permutation modules (over $\Z$), we deduce that all finitely generated discrete $H$-modules have type $\FP_\infty$. This fact, together with $H$ having type $\F_\infty$ (by Theorem \ref{thm:LHS}), gives everything we need for the proof of \cite[Theorem D]{KropMart} to hold in our case, and we get:

\begin{thm}
For $H$ poly-(compact-open-by-cyclic), $G = B \wr_X^A H$ has type $\F_n$ if and only if $B$ has type $\F_n$ and $H$ acts diagonally on $X^p$ with finitely many orbits for $p \leq n$.
\end{thm}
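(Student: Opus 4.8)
The plan is to read off the biconditional from Theorems~\ref{PeterA} and~\ref{PeterB}, the whole point being that the poly-(compact-open-by-cyclic) hypothesis on $H$ renders automatic every stabiliser condition appearing in those two results. So the first thing I would isolate is the following consequence of the noetherian property established just above: \emph{every open subgroup $K$ of $H$ has type $\FP_\infty$ over $\Z$}. Since $K$ is open, $H/K$ is a discrete $H$-set and $\Z[H/K]\cong\ind^H_K\Z$ is cyclic, hence a finitely generated discrete $H$-module, so it has type $\FP_\infty$ by the noetherian property. Corollary~\ref{cor:fpinduction} then says $\ind^H_K\Z$ has type $\FP_\infty$ over $H$ if and only if $\Z$ does over $K$, i.e. if and only if $K$ has type $\FP_\infty$. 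In particular every stabiliser of the diagonal $H$-action on any $X^p$ is open, hence of type $\FP_\infty$, and a fortiori of type $\FP_{n-p}$.

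For the ``if'' direction I would assume $B$ is of type $\F_n$ and that $H$ acts diagonally on $X^p$ with finitely many orbits for $p\le n$. Here $H$ is of type $\F_\infty$, in particular $\F_n$, by Theorem~\ref{thm:LHS} (as $H$ is poly-(compact-open-by-cyclic)); and the stabilisers of the $X^p$-actions are of type $\FP_{n-p}$ by the previous paragraph. Thus all the hypotheses of Theorem~\ref{PeterA} hold, and that theorem yields that $G=B\wr^A_X H$ is of type $\F_n$.

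For the ``only if'' direction I would assume $G$ is of type $\F_n$, so that $G$ is of type $\F_m$ for every $m\le n$. By the opening paragraph the stabiliser hypotheses demanded by Theorem~\ref{PeterB} are satisfied for free, so I can apply that theorem with parameter $m$ for each $m\le n$: it gives that $H$ acts diagonally on $X^m$ with finitely many orbits, and, taking $m=n$, that $B$ is of type $\F_n$. Running $m$ from $1$ to $n$ (the case $p=0$ being trivial) therefore recovers the entire right-hand side.

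The genuine difficulty is not in this assembly but upstream of it, in the adaptation of \cite[Lemma~3]{Hall} guaranteeing that finitely generated discrete $H$-modules over $\Z$ are of type $\FP_\infty$; it is precisely this that collapses the stabiliser conditions of both Theorem~\ref{PeterA} and Theorem~\ref{PeterB}. Once that input is granted, the only point requiring care is to invoke Theorem~\ref{PeterB} at \emph{every} $m\le n$, rather than at $m=n$ alone, so as to obtain finitely many orbits on each $X^p$ and not merely on $X^n$.
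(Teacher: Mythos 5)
Your proof is correct and follows essentially the same route as the paper, which simply notes that the noetherian property of finitely generated discrete $H$-modules together with $H$ having type $\F_\infty$ lets the proof of Kropholler--Martino's Theorem D (i.e.\ exactly your assembly from Theorems~\ref{PeterA} and~\ref{PeterB}) carry over. Your write-up is in fact more explicit than the paper's sketch, in particular in deducing that open subgroups of $H$ have type $\FP_\infty$ via Corollary~\ref{cor:fpinduction} and in noting that Theorem~\ref{PeterB} must be applied at every $m\le n$.
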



\section{Homological dimension vs Cohomological dimension}
\label{hdcd}

A projective resolution $P\to A$ is said to have {\it finite length} if there is a positive integer $n$ such that $P_k=0$ for all $k\geq n$.
The {\it projective dimension} of $A$ is defined to be the minimum $n$ such that $A$ has a projective resolution of finite length $n$. If such an $n$ does not exist we say that $A$ has {\it infinite projective dimension}.

Over a general ring $R$, of course these things are perfectly well-defined. But when $\RG$ does not have enough projectives, it is difficult to know how to make progress: we have nothing like Theorem \ref{fpnkpn} to help us here. So we `search under the streetlight', and see what can be said working over $\Q$. 

Subsequently, the {\it rational discrete cohomological dimension} of $G$, denoted by $\ccd_\Q(G)$, is defined to be the projective dimension of the trivial module $\Q$ over $\QG$. By substituting projective discrete modules with flat discrete modules, the notion of {\it rational discrete flat dimension} arises in $\QGdis$. In particular, a t.d.l.c. group $G$ will be said to have {\it rational discrete homological dimension} $n\in\N\cup\{\infty\}$, denoted by $\hhd_\Q(G)$, if the trivial module $\Q$ has flat dimension $n$. Since every projective discrete $\QG$-module is flat one easily concludes the following.
\begin{fact} Let $G$ be a t.d.l.c. group.
\begin{enumerate}[$(1)$]
\item $\fd_\Q(M)\leq \pd_\Q(M)$ for every $M\in\textup{ob}(\QGdis)$.
\item $\hhd_\Q(G)\leq\ccd_\Q(G)$.
\end{enumerate}
\end{fact}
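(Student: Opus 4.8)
The plan is to derive part (2) as an immediate special case of part (1), and to prove part (1) directly from the single observation that every projective discrete $\QG$-module is flat. By definition $\hhd_\Q(G) = \fd_\Q(\Q)$ and $\ccd_\Q(G) = \pd_\Q(\Q)$, where $\Q$ is the trivial discrete $\QG$-module; so once (1) is established, applying it to $M = \Q$ yields $\hhd_\Q(G) = \fd_\Q(\Q) \leq \pd_\Q(\Q) = \ccd_\Q(G)$, which is exactly (2). Thus the whole content lies in (1).

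For (1), I would first dispose of the trivial case: if $\pd_\Q(M) = \infty$ there is nothing to prove, since any real number is $\leq \infty$. So I may assume $\pd_\Q(M) = n < \infty$. By definition of projective dimension this means $M$ admits a projective resolution of finite length $n$, say
\begin{equation*}
0 \to P_n \to P_{n-1} \to \cdots \to P_0 \to M \to 0,
\end{equation*}
with each $P_k$ a projective object of $\QGdis$ and $P_k = 0$ for $k > n$. The key step is then to invoke Fact~\ref{fact:flat}(1): every projective discrete $\QG$-module is flat. Hence each $P_k$ in the resolution above is flat, so the same complex is a flat resolution of $M$ of length $n$. Since the rational discrete flat dimension $\fd_\Q(M)$ is by definition the minimal length of a flat resolution of $M$, it is bounded above by the length of any particular flat resolution; in particular $\fd_\Q(M) \leq n = \pd_\Q(M)$, as required.

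There is essentially no serious obstacle here: the entire argument rests on the implication \emph{projective} $\Rightarrow$ \emph{flat} recorded in Fact~\ref{fact:flat}(1), together with the elementary principle that an exhibited resolution of a given length bounds the corresponding homological dimension from above. The only point requiring any care is conceptual rather than technical, namely keeping the two dimension functions correctly aligned with their defining classes of resolutions (projective for $\pd_\Q$ and $\ccd_\Q$, flat for $\fd_\Q$ and $\hhd_\Q$), so that a projective resolution genuinely counts as a flat resolution of the same length. With that identification in place, both inequalities follow at once.
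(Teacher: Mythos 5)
Your proof is correct and follows exactly the paper's (implicit) argument: the paper offers no separate proof of this Fact beyond the preceding sentence ``Since every projective discrete $\QG$-module is flat one easily concludes the following,'' which is precisely your key step via Fact~\ref{fact:flat}(1), with (2) obtained from (1) by taking $M=\Q$.
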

\begin{prop}
Let $G$ be a t.d.l.c. group. Then:
\begin{enumerate}[$(a)$]
\item every finitely presented flat discrete $\QG$-module is projective;
\item if $M$ is of type $\FP_\infty$, then $\pd_\Q(M)=\fd_\Q(M)$;
\item if $G$ is of type $\FP_\infty$, then $\ccd_\Q(G)=\hhd_\Q(G)$;
\end{enumerate}
\begin{enumerate}[$(a')$]
\item every countably presented flat discrete $\QG$-module has projective dimension less or equal to $1$;
\item if $M$ has a resolution by countably generated permutation $\QG$-modules with compact open stabilisers, then $\pd_\Q(M)\leq \fd_\Q(M)+1$;
\item If $G$ is $\sigma$-compact, then $\ccd_\Q(G)\leq\hhd_\Q(G)+1$.
\end{enumerate}
\end{prop}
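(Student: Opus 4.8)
The plan is to deduce $(c')$ directly from $(b')$. Since $\ccd_\Q(G) = \pd_\Q(\Q)$ and $\hhd_\Q(G) = \fd_\Q(\Q)$ by definition, the inequality in $(c')$ is exactly the conclusion of $(b')$ applied to the trivial module $M = \Q$. Thus it suffices to show that, when $G$ is $\sigma$-compact, the trivial discrete $\QG$-module $\Q$ admits a resolution by countably generated proper discrete permutation $\QG$-modules.

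To produce such a resolution I would first record the bookkeeping consequence of $\sigma$-compactness. Fix $U \in \CO(G)$; since $G$ is a countable union of compact sets and each compact set meets only finitely many cosets of the open subgroup $U$, the coset space $G/U$ is countable, so $\Q[G/U]$ has countable dimension over $\Q$. More generally, any cyclic discrete $\QG$-module $\QG \cdot m \cong \Q[G/\stab_G(m)]$ is a quotient of some $\Q[G/U]$ with $U \in \CO(G)$, hence also of countable $\Q$-dimension. Consequently, for $\sigma$-compact $G$, a discrete $\QG$-module is countably generated over $\QG$ if and only if it has countable dimension over $\Q$; in particular the class of countably generated discrete $\QG$-modules is closed under passing to submodules.

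With this in hand, the resolution is built by the usual inductive procedure. Start with the augmentation $\Q[G/U] \twoheadrightarrow \Q$ for some $U \in \CO(G)$, and at each subsequent stage let $M$ denote the (countably generated) kernel obtained so far: choosing countably many $\QG$-generators $m_i$ of $M$ together with compact open subgroups $U_i \leq \stab_G(m_i)$, the map $\bigoplus_i \Q[G/U_i] \to M$ is a surjection from a countable direct sum of cyclic proper discrete permutation modules, which is itself a countably generated proper discrete permutation $\QG$-module. Its kernel is a submodule of a countable-dimensional space, hence again countably generated by the previous paragraph, so the induction continues and yields the desired resolution $\cdots \to P_1 \to P_0 \to \Q \to 0$. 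Applying $(b')$ then gives $\pd_\Q(\Q) \leq \fd_\Q(\Q) + 1$, which is $(c')$.

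The only real content, and the step I would check most carefully, is the equivalence in the second paragraph between $\sigma$-compactness and countability of the $\Q$-dimension, as everything else is the standard kernel-chasing that assembles a permutation resolution. Discreteness of each successive kernel is automatic, since submodules of discrete modules are discrete; the point genuinely requiring $\sigma$-compactness is that these kernels remain \emph{countably} generated, so that each can be covered by a countably generated permutation module and the construction never escapes the countable setting.
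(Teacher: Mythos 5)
Your argument for $(c')$ is correct and is essentially the paper's own (very terse) proof with the details filled in: the paper simply asserts that for $\sigma$-compact $G$ ``countably generated discrete $\QG$-modules are countable and vice-versa'' and then takes a countable-type resolution and applies $(b')$. Your second paragraph is precisely the justification of that assertion, and your inductive construction of the resolution is the standard kernel-chasing the paper leaves implicit. One small slip: the cyclic module $\QG\cdot m$ is a \emph{quotient} of $\Q[G/\stab_G(m)]$, not isomorphic to it (the orbit elements $gm$ may satisfy linear relations); this does not affect the conclusion that it has countable $\Q$-dimension.

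The genuine gap is one of coverage: the statement to be proved is the whole proposition, and you have only established $(c')$, conditionally on $(b')$, which you do not prove. Parts $(a)$--$(c)$ and $(a')$--$(b')$ contain the real content and require different tools. In the paper, $(a)$ rests on the $\Hom$--$\otimes$ identity $\Hom_G(P,\biB(G))\otimes_G M\cong\Hom_G(P,M)$ for $P$ finitely generated projective, which lets one transfer Bieri's argument that a finitely presented flat module is projective; $(b)$ and $(c)$ then follow formally. Part $(a')$ needs the Lazard-type theorem (Theorem~\ref{thm:lazard}), writing a countably presented flat module as a countable direct limit of finitely generated proper discrete permutation modules and running the telescope argument of \cite[Lemma~4.4(b)]{bieri} to bound the projective dimension by $1$; and $(b')$ follows because the kernels in a countably generated resolution are countably presented. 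Without these, your deduction of $(c')$ has no foundation, so as a proof of the proposition the proposal is incomplete rather than wrong.
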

\begin{proof} Let $P$ be a finitely generated projective discrete $\QG$-module. Following \cite[\S 4.3]{it:ratdiscoh}, there is an isomorphism
$$\Hom_G(P,\biB(G))\otimes_G M\to\Hom_G(P,M),$$
which is the so-called $\Hom-\otimes$ identity. Thus one can transfer verbatim the proof of \cite[Lemma 4.4(a)]{bieri} and (a) holds. Moreover, (b) and (c) are straightforward consequences of (a).

Since a Lazard-type Theorem is available in the context of discrete $\QG$-modules, one can prove $(a')$ by following the argument of \cite[Lemma~4.4(b)]{bieri}. Subsequently, $(b')$ holds since kernels in a countably generated resolution are countably presented. Finally, when $G$ is $\sigma$-compact, countably generated discrete $\QG$-modules are countable and vice-versa. Therefore let $P\to\Q$ be a countable-type resolution and apply $(b')$.
\end{proof}

\begin{prop}
Let $G$ be a t.d.l.c. group and $N$ a normal closed subgroup. Then $\ccd_\Q(G) \leq \ccd_\Q(N) + \ccd_\Q(G/N)$ and $\hhd_\Q(G) \leq \hhd_\Q(N) + \hhd_\Q(G/N)$.
\end{prop}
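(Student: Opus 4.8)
Since the claimed inequalities are vacuous as soon as one of the four dimensions on the right-hand sides is infinite, I would assume all of them finite and write $m=\ccd_\Q(N)$, $n=\ccd_\Q(G/N)$ for the cohomological statement and $m=\hhd_\Q(N)$, $n=\hhd_\Q(G/N)$ for the homological one. The whole argument rests on the two Lyndon--Hochschild--Serre spectral sequences attached to the extension $N\to G\to G/N$. Recalling from the discussion of module structures on $\dTor$ and $\dExt$ that $(-)^N$ carries a discrete $\QG$-module to a discrete $\Q[G/N]$-module, and that $\dTor^N_q(\Q,-)$ is a functor $\QGdis\to{}_{\Q[G/N]}\dis$, the coefficient objects occurring below genuinely live in ${}_{\Q[G/N]}\dis$, so the outer $\Q[G/N]$-(co)homology is meaningful.

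For the cohomological bound I would fix $M\in\QGdis$ and invoke the first-quadrant spectral sequence
\[
E_2^{p,q}=H^p\big(G/N,\,H^q(N,M)\big)\Longrightarrow H^{p+q}(G,M).
\]
Because $H^q(N,-)=\dExt^q_{\Q[N]}(\Q,-)$ is computed from a projective resolution of $\Q$ of length $m$, it vanishes for $q>m$; likewise $H^p(G/N,-)$ vanishes for $p>n$. Hence $E_2$ is concentrated in the rectangle $0\le p\le n$, $0\le q\le m$, so $E_2^{p,q}=0$ whenever $p+q>m+n$, and therefore $H^k(G,M)=0$ for every $k>m+n$ and every $M$. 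As $\ccd_\Q(G)=\sup\{k:H^k(G,M)\neq0\text{ for some }M\in\QGdis\}$, this gives $\ccd_\Q(G)\le m+n$. The homological statement is perfectly dual: from
\[
E^2_{p,q}=\dH_p\big(G/N,\,\dH_q(N,M)\big)\Longrightarrow \dH_{p+q}(G,M),
\]
the vanishing of $\dH_q(N,-)=\dTor^N_q(\Q,-)$ for $q>m$ and of $\dH_p(G/N,-)$ for $p>n$ gives $\dH_k(G,M)=0$ for $k>m+n$, hence $\hhd_\Q(G)\le m+n$ via $\hhd_\Q(G)=\sup\{k:\dH_k(G,M)\neq0\text{ for some }M\}$.

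The substance of the proof is thus the construction of these two spectral sequences, which I would obtain as Grothendieck spectral sequences from the factorisations $(-)^G=(-)^{G/N}\circ(-)^N$ and $\Q\otimes_G-\;=\;(\Q\otimes_{G/N}-)\circ(\Q\otimes_N-)$. The homological side is easy: a projective generator $\Q[G/U]$ of $\QGdis$ satisfies $\Q\otimes_N\Q[G/U]\cong\Q[(G/N)/(UN/N)]$ (using $N\backslash G/U=G/UN$ as in the proof of Proposition~\ref{prop:biG}), and $UN/N$ is compact open in $G/N$; thus $\Q\otimes_N-$ sends projectives to proper discrete permutation $\Q[G/N]$-modules, which are $\dTor^{G/N}$-acyclic, precisely the input required. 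The main obstacle is the cohomological acyclicity: one must verify that $(-)^N$ sends injective objects of $\QGdis$ to $H^\bullet(G/N,-)$-acyclic objects of ${}_{\Q[G/N]}\dis$. This needs the explicit description of injectives in $\QGdis$ as (summands of) modules coinduced via the internal-hom functor $\dd\Hom$, together with careful tracking of the $\dd\Hom$-versus-$\Hom$ distinction; once one checks that the $N$-invariants of such a coinduced module is again coinduced over $\Q[G/N]$, acyclicity follows and both spectral sequences are in hand.
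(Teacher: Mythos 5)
Your proposal is correct and follows exactly the route the paper takes: the paper's entire proof is the single line ``Apply the Lyndon--Hochschild--Serre spectral sequence,'' and your argument is the standard corner-vanishing deduction from the two LHS spectral sequences, supplemented by a reasonable sketch of why those spectral sequences exist in $\QGdis$ (projectives being sent to flat, hence $\dTor^{G/N}$-acyclic, permutation modules on the homological side, and the coinduced-module/$\dd\Hom$ analysis on the cohomological side). If anything you supply more justification than the paper does, and your identification of $\ccd_\Q$ and $\hhd_\Q$ with the vanishing ranges of $\dH^k(G,-)$ and $\dH_k(G,-)$ is the standard fact in an abelian category with enough projectives, which $\QGdis$ is.
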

\begin{proof}
Apply the Lyndon-Hochschild-Serre spectral sequence.
\end{proof}

\bigskip



\begin{thebibliography}{99}
\bibitem{AHV}
N.~Abe, G.~Henniart, M.-F.~Vign\'{e}ras.
\newblock {\em Modulo $p$ representations of reductive $p$-adic groups: functorial properties.}
\newblock Available at \url{https://arxiv.org/abs/1703.05599}.

\bibitem{abels}
H.~{Abels}.
\newblock {\em Kompakt denierbare topologische Gruppen.}
\newblock Math. Ann. \textbf{197} (1972): 221-233.

\bibitem{AT}
H.~{Abels} and A.~{Tiemeyer}.
\newblock {\em Compactness properties of locally compact groups,.}
\newblock Transform. Groups 2 (1997), no. 2, 119?135.

\bibitem{AGP}
M.~{Aguilar}, S.~{Gitler} and C.~{Prieto}.
\newblock {\em Algebraic topology from a homotopical viewpoint.}
\newblock Springer Science \& Business Media, 2008.

\bibitem{alonso}
J.M.~{Alonso}.
\newblock {\em Finiteness conditions on groups and quasi-isometries.}
\newblock Journal of Pure and Applied Algebra \textbf{95.2} (1994): 121-129.

\bibitem{BCK}
L.~{Bartholdi}, Y.~{De Cornulier} and D.~{Kochloukova}.
\newblock {\em Homological finiteness properties of wreath products.}
\newblock The Quarterly Journal of Mathematics, 66 (2015), 2, 437-457.

\bibitem{BC}
S.~Bazzoni and S.~Crivei.
\newblock {\em One-Sided Exact Categories.}
\newblock J. Pure Appl. Algebra 217 (2013), 2, 377-391.

\bibitem{bieri}
R.~Bieri.
\newblock {\em Homological dimension of discrete groups}.
\newblock Mathematics Dept., Queen Mary College, University of London, 1981.

\bibitem{brown}
K.S.~{Brown}
\newblock {\em Cohomology of groups.}
\newblock Vol. 87. New York: Springer-Verlag, 1982.

\bibitem{brown:presentations}
K.S.~{Brown}
\newblock {\em Presentations for groups acting on simply connected complexes.}
\newblock Journal of Pure and Applied Algebra 32 (1984), 1-10.

\bibitem{brown:fp}
K.S.~{Brown}.
\newblock {\em Finiteness properties of groups.}
\newblock Journal of Pure and Applied Algebra \textbf{44.1-3} (1987): 45-75.

\bibitem{ilaria}
I.~{Castellano}.
\newblock {\em Rational discrete first degree cohomology for totally disconnected locally compact groups.}
\newblock Available at \url{https://arxiv.org/abs/1506.02310}.

\bibitem{it:ratdiscoh}
I.~{Castellano} and T.~{Weigel}.
\newblock {\em Rational discrete cohomology for totally disconnected locally compact groups}.
\newblock Journal of Algebra, 2016.

\bibitem{Cornulier}
Y.~Cornulier.
\newblock {\em Locally compact wreath products.}
\newblock Available at \url{https://arxiv.org/abs/1703.08880}.

\bibitem{Cornulier2}
Y.~Cornulier.
\newblock {\em Finitely presented wreath products and double coset
decompositions.}
\newblock Geom Dedicata 122 (2006), 89–108.

\bibitem{cdlh}
Y.~{Cornulier} and P.~{de La Harpe}.
\newblock {\em Metric geometry of locally compact groups.}
\newblock EMS Tracts in Mathematics, \textbf{25}. European Mathematical Society (EMS), Z\"urich, 2016.

\bibitem{Ged}
G.~{Corob Cook}.
\newblock {\em Homotopical algebra in categories with enough projectives.}
\newblock Available at \url{https://arxiv.org/abs/1703.00569}.

\bibitem{CCV}
G.~{Corob Cook} and M.~Vannacci.
\newblock {\em Type $\operatorname{PFP}_n$ Groups.}
\newblock To appear.

\bibitem{fluch} 
M.~Fluch, M.~Marschler, S.~Witzel, M.~Zaremsky.
\newblock {\em The Brin-Thompson groups $sV$ are of type $\F_\infty$.}
\newblock Pacific Journal of Mathematics \textbf{266.2} (2013): 283-295.

\bibitem{Hall}
P.~Hall.
\newblock {\em Finiteness conditions for soluble groups.}
\newblock Proc. London Math. Soc. (3), 4 (1954), 419–436.

\bibitem{HM}
K.~{Hofmann} and S.~{Morris}.
\newblock {\em Locally compact products and coproducts in categories of topological groups.}
\newblock Bull. Austral. Math. Soc. 17 (1977), 401-417.

\bibitem{Kap} 
C.~{Kapoudjian}. 
\newblock{Simplicity of Neretin's group of spheromorphisms.} 
\newblock Annales de l'Institut Fourier, Volume 49 (1999) no. 4, pp. 1225-1240.

\bibitem{km}
B~{Kr\"on} and R.G.~{ M\"oller}.
\newblock {\em Analogues of Cayley graphs for topological groups.}
\newblock Mathematische Zeitschrift \textbf{258.3} (2008).

\bibitem{KropMart}
P.~{Kropholler} and A.~{Martino}.
\newblock {\em Graph-wreath products and finiteness conditions.}
\newblock J. Pure Appl. Algebra 220 (2016), 1, 422-434.

\bibitem{Lamartin}
W.~LaMartin.
\newblock {\em On the Foundations of $k$-Group Theory.}
\newblock Diss. Math., CXLVI. Pa\'{n}stwowe Wydawn. Naukowe, Warsaw (1977).

\bibitem{Luck}
W.~{L\"{u}ck}.
\newblock {\em The type of the classifying space for a family of subgroups.}
\newblock J. Pure Appl. Algebra 149 (2000), 177-203.

\bibitem{moller}
R.G.~M\''{o}ller. 
\newblock {\em Graphs, permutations and topological groups.}
\newblock arXiv preprint arXiv:1008.3062 (2010).

\bibitem{Ratcliffe}
J.~{Ratcliffe}.
\newblock {\em Finiteness conditions for groups.}
\newblock J. Pure Appl. Algebra 27 (1983), 173-185.

\bibitem{RZ}
L.~Ribes and P.~Zalesskii.
\newblock {\em Profinite Groups.}
\newblock Ergebnisse der Mathematik und ihrer Grenzgebiete, Folge 3, 40. Springer, Berlin (2000).

\bibitem{sauert}
R.~{Sauer} and W.~{Thumann}.
\newblock {\em Topological models of finite type for tree almost automorphism groups.}
\newblock International Mathematics Research Notices 2017.23 (2016): 7292-7320.

\bibitem{serre}
J--P.~{Serre}.
\newblock {\em Galois cohomology. }
\newblock Springer Science and Business Media, 2013.


\bibitem{Shannon}
R.~Shannon.
\newblock {\em The Rank of a Flat Module.}
\newblock Proc. Amer. Math. Soc. 24 (1970), 3, 452-456.

\bibitem{spanier}
E.H.~{Spanier},
\newblock {\em Algebraic topology.}
\newblock Vol. 55. No. 1. Springer Science \& Business Media, (1989).
\end{thebibliography}
\end{document}